\documentclass[11pt]{amsart}
\usepackage{amsmath}
\usepackage{amssymb}
\usepackage{amsthm}
\usepackage{mathtools}
\usepackage{physics}
\usepackage{graphicx}
\usepackage{amsfonts}
\usepackage[shortlabels]{enumitem}
\usepackage[utf8]{inputenc}
\usepackage[margin=2.85cm]{geometry}
\usepackage[framemethod=TikZ]{mdframed}
\usepackage[bookmarksopen=true,bookmarksnumbered=true, bookmarksopenlevel=2, colorlinks=false, pdfstartview=FitH]{hyperref}
\usepackage{theoremref}
\usepackage{tikz-cd}
\usepackage{mathrsfs}
\usepackage{stmaryrd}

\usepackage{comment}
\usepackage{color}

\usepackage{subfiles}

\specialcomment{details}{%
\begingroup\color{green}}{%
\endgroup}

\usepackage{titlesec}
\usepackage{textcomp}

\titleformat{\section}[hang]
{\centering\large\scshape}
{\thesection.}{.5em}{}

\titleformat{\subsection}[runin]
{\normalfont\bfseries}
{\thesubsection.}{.5em}{}

\titleformat{\subsubsection}[runin]
{\normalfont\itshape}
{\thesubsubsection.}{.5em}{}

\titlespacing{\section}{0pc}{.5cm}{.5cm}
\titlespacing{\subsection}{0pc}{.8cm}{.5pc}

\theoremstyle{plain}
\newtheorem{theorem}{Theorem}[section]
\newtheorem{proposition}[theorem]{Proposition}
\newtheorem{lemma}[theorem]{Lemma}
\newtheorem{corollary}[theorem]{Corollary}

\theoremstyle{definition}

\newtheorem{example}[theorem]{Example}
\newtheorem{remark}[theorem]{Remark}

\newcommand{\tp}[1]{\prescript{t}{}#1}

\newcommand{\nm}[1]{\left\lVert#1\right\rVert}

\DeclareMathOperator{\Gal}{Gal}

\DeclareMathOperator{\PSL}{PSL}
\DeclareMathOperator{\SO}{SO}
\DeclareMathOperator{\OO}{O}

\DeclareMathOperator{\Sym}{Sym}
\DeclareMathOperator{\GL}{GL}

\DeclareMathOperator{\Sp}{Sp}

\DeclareMathOperator{\Aut}{Aut}

\DeclareMathOperator{\Span}{Span}

\DeclareMathOperator{\erfc}{erfc}
\DeclareMathOperator{\sgn}{sgn}

\DeclareMathOperator{\Nm}{Nm}

\DeclareMathOperator{\Iso}{Iso}

\DeclareMathOperator{\signature}{signature}

\def\hmath$#1${\texorpdfstring{{\rmfamily\textit{#1}}}{#1}}

\title[Linking numbers and special differential characters]{Linking numbers and special differential characters in arithmetic hyperbolic manifolds}
\author{Mads Christensen}
\address{Max-Planck-Institut für Mathematik, Vivatsgasse 7, 53111 Bonn, Germany.}
\email{christensen@mpim-bonn.mpg.de}

\begin{document}

\begin{abstract}
    We construct non-holomorphic Siegel modular forms whose coefficients encode linking numbers in arithmetic hyperbolic manifolds of simplest type. Our proofs rely on an improved understanding of the transgression forms appearing in Kudla--Millson theory, and along the way we establish some structural properties including in particular their expected current equations. 
\end{abstract}
\maketitle{}

\section{introduction}

In a previous paper \cite{christensen2024linking}, we investigated genus $2$ Siegel type $q$-series related to linking numbers of certain special geodesics of arithmetic hyperbolic $3$-folds. In this sequel we start by adding several improvements to the results of loc.cit.; firstly we consider arithmetic hyperbolic $n$-folds $M$ of simplest type for any odd $n$ and any basefield, secondly we consider all possible genus for the associated Hilbert--Siegel series, and thirdly we consider linking numbers involving a more general (fixed) closed totally geodesic cycle.

Related results appear in many papers \cite{zwegers2008mock,MR3874688,MR3862073,westerholt2016indefinite,MR3708083,MR3763408,MR3993802,MR3796415,MR4570161,MR4611672}, but to our knowledge \cite{MR4611672} is the only other paper that considers Siegel theta series of arbitrary genus. Similarly to multiple of the cited papers we rely on Kudla--Millson theta series to establish modularity of (non-holomorphic completions of) our $q$-series. The perspective that we try to push in this paper is that our genus $m$ series should be regarded as lifts of totally geodesic ($m-1$)-cycles (rather than of some $m$-chain with the geodesic cycle as boundary). We note that the restriction to odd $n$ in this paper is just for simplicity, and by using the metaplectic double cover our results should carry over to any $n$.



Another, but related, point of this paper is to give an explicit construction of transgression forms for special geodesic cycles. For orthogonal groups of arbitrary signatures, such forms were constructed in \cite{MR2097357} for genus $1$ by using Borcherds singular theta lift. Our method is instead based on the ideas of Mathei and Quillen, as in \cite{MR3783423,MR3935034}, which allows us to consider arbitrary genus. We prove that our transgression forms are sparks in the sense of \cite{MR1993742}, and hence they upgrade the special geodesic cycles to differential character classes in the sense of Cheeger and Simons \cite{MR488077,MR827262}. We then consider the natural pairing of these classes in differential cohomology and obtain a modularity result for $q$-series of such pairings.

\subsection*{Results}

We now explain the results in a little more detail. Let $(\mathbf{V},Q)$ be a quadratic space over a totally real field $F$, whose signature at the various real places of $F$ are 
\begin{equation}\label{eq:vst}
    (n,1),(n+1,0),\ldots,(n+1,0).
\end{equation}
Let $\mathbf{L}\subset\mathbf{V}$ be an even integral lattice. For the purpose of exposition we assume that $\mathbf{L}$ is unimodular. In the following we drop various variables from the notation which are irrelevant in the unimodular case.  

Associated with any torsion-free arithmetic subgroup $\Gamma\subset \SO^0(\mathbf{L})$ is an oriented arithmetic hyperbolic orbifold $M$ and certain special geodesic cycles $c(T)$ in $M$ indexed by totally positive definite symmetric matrices $T\in \Sym_m(F)$ \cite{MR651982,MR1079646}. The $m$ equals the codimension of $c(T)$ in $M$. 

For $m>1$, the special geodesic cycles of Kudla and Millson do not actually describe all totally geodesic cycles in $M$.\footnote{We somewhat loosely use special geodesic cycle both to refer to the cycles $c(T)$, but also any of their components.} For example, if $m=n-1$, then the special geodesic cycles consist of geodesics with trivial holonomy. In the $(n,m)=(3,2)$ case in \cite{christensen2024linking} this is the same as what we called hyperbolic geodesics. To obtain more general totally geodesic cycles, one may consider a quadratic space $(\mathbf{V}',Q')$ over a totally real number field $F'$ containing $F$ whose restriction to $F$ embeds in $\mathbf{V}$. Then an arithmetic hyperbolic manifold associated with $\mathbf{V}'$ admits a totally geodesic immersion in $M$. In the bulk of the paper we refer to such cycles as special of general type. 

For a null homologous geodesic cycle of general type $c$ and $T$, for which $c$ and $c(T)$ do not intersect, we consider the linking number $\iota(T;s)$ between $c$ and $c(T)$, depending on an auxiliary relative homology class $s\in H_m(M,c)$. In the bulk of the paper we explain how to extend the definition of $\iota(T;s)$ to the cases where $c$ and $c(T)$ do intersect. In higher dimensions this is somewhat technical and unfortunately we only prove that $\iota(T;s)$ is a rational number in general. For $\tau$ in the Hilbert--Siegel upper-half space over $F$, and denoting by $q^T$ the usual holomorphic exponential function of $\tau$, we are then interested in the series
\[
    I(\tau;s) = \sum_{T\in\Sym_m(F)_+} \iota(T;s) q^T
\]
($\Sym_m(F)_+$ denotes totally positive definite matrices). 
Generalizing \cite{christensen2024linking} we construct an explicit non-holomorphic function $G^\ast(\tau;c)$, and we prove
\begin{theorem}\label{thm:ai}
    The function $I(\tau;s)+G^\ast(\tau;c)$ transforms as a Hilbert--Siegel modular form of genus $m$, weight $\frac{n+1}{2}$, and level $1$.
\end{theorem}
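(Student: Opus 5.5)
The plan is to realize $I(\tau;s)+G^\ast(\tau;c)$ as the integral of a Kudla--Millson type theta form over a relative chain, and then read off modularity from that of the theta form. Let $\varphi_{KM}$ be the Kudla--Millson Schwartz form of degree $m$ and $\Theta_{KM}(\tau)=\sum_{x\in\mathbf{L}^m}\varphi_{KM}(x,\tau)$. This is a closed $m$-form on $M$ which transforms in $\tau$ as a Hilbert--Siegel modular form of genus $m$ and weight $\frac{n+1}{2}$; since $\mathbf{L}$ is unimodular and $n$ is odd, the level is $1$ and no metaplectic cover is needed. The key input is a transgression form $\psi$ of degree $m-1$, constructed via Mathai--Quillen superconnections. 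For each $x$ with $T=Q(x)$ totally positive, $\psi(x)$ should satisfy the current equation
\[
d[\psi(x)] + \delta_{c(x)} = [\varphi_{KM}(x)],
\]
so that $\psi$ is a spark for $c(T)$.

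For the pairing I would restrict to the cycle $c$. Since $c$ is null-homologous, choose $s\in H_m(M,c)$ represented by a chain $S$ with $\partial S=c$. Writing $\Theta_{KM}=\Theta_+ + \Theta_{\mathrm{rest}}$, where $\Theta_+$ collects the terms with $Q(x)$ totally positive definite, I would compute $\int_S \varphi_{KM}(x)$ by Stokes. This gives $\int_S\varphi_{KM}(x)=\#(S\cap c(x)) + \int_c\psi(x)$. Summing over $x$ with $Q(x)=T$ and multiplying by the Gaussian factor then produces $\iota(T;s)q^T$, up to a term $\int_c \psi(x)$. That term is the contribution to be absorbed into $G^\ast$.

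The correct definition is then
\[
G^\ast(\tau;c)=-\sum_{x}\int_c\psi(x,\tau)+(\text{degenerate and indefinite terms}),
\]
so that $I(\tau;s)+G^\ast(\tau;c)=\int_S\Theta_{KM}(\tau)$. Equivalently, this is the pairing in differential cohomology of the theta spark with the class of $c$. Modularity then follows, because integration over $S$ commutes with the $\Sp_m$-action on the $\tau$ variable. Independence of the choice of $S$ within $s$ modulo cycles is not needed, since $s$ is fixed. To make $\int_c\psi$ meaningful I would use the general-type structure of $c$: restricted to the sub-hyperbolic manifold attached to $\mathbf{V}'$, the forms $\psi(x)$ have a Gaussian decay in the directions orthogonal to $\mathbf{V}'$.

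I expect the main obstacle to be analytic. First, the sum of sparks $\sum_x\psi(x)$ must converge absolutely and locally uniformly on $c$ minus a measure-zero set. Second, one must handle the case where $c$ meets $c(T)$, where $\psi(x)$ is singular along $c(x)$ with only logarithmic or $|y|^{-(m-1)}$ type singularity. Third, the same Stokes argument must go through in the presence of the singular locus. I would first prove the estimate that $\psi(x)$ decays like $e^{-\pi\,\mathrm{tr}(Q(x)v)}$ times a polynomial, uniformly away from $c(x)$, and that it is locally integrable near $c(x)$. Then I would define $\iota(T;s)$ in the intersecting case as the regularized value $\int_S\varphi_{KM}(x)-\int_c\psi(x)$, and check that it is rational by identifying it with an intersection number plus a rational boundary correction.
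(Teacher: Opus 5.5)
Your skeleton is the paper's last step: write $I+G^\ast$ as $\int_S\Theta$ and inherit modularity from Lemma \ref{lem:tps}. Away from intersections, your spark-plus-Stokes computation is a legitimate substitute for the paper's $t\to\infty$ argument. The gap is that the two places where the real work lies are handled either by definition or by an estimate that does not do the job.

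First, the intersecting case. Your identity $\int_S\varphi(x)=\#(S\cap c(x))+\int_c\psi(x)$ comes from testing the current equation (Proposition \ref{prp:tce}) against $\delta_S$. That is only justified when $\partial S=c$ misses $c(x)$ and $S$ is transverse to it. When $c$ meets $c(T)$ you simply \emph{define} $\iota(T;s)$ as $\int_S\varphi-\int_c\psi$. This makes $I+G^\ast=\int_S\Theta$ a tautology, but it leaves unproved that this number is independent of $v$. Without that, $I(\tau;s)$ is not the $q$-series of linking numbers in the statement.

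The paper's device is to set $\iota(T,\varphi_f;s)=\lim_{t\to\infty}\int_\sigma\Theta(T,\varphi_f,\varphi^0,vt)$. By \eqref{fte} and Stokes, this differs from $\int_\sigma\Theta(T,\varphi_f,\varphi^0,v)$ exactly by $\beta(T,\varphi_f,v;c)$, and the limit localizes:
\begin{enumerate}[label=(\alph*)]
\item Proposition \ref{prp:psv} kills every contribution away from $c(X)$.
\item Transverse interior intersections contribute $\pm1$ by \eqref{eq:ecc}.
\item Where a lift $c_{\gamma^{-1}U}$ of $c$ meets $c_X$, the count $\dim\gamma^{-1}U+\dim\Span X=n+1$ forces $\gamma^{-1}U\cap\Span X\neq0$. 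So $u(X)$ maps that lift into a hyperplane of $\mathbb{R}^m$. Pushing $c$ off along the complementary direction by a collar $C\times[0,\delta]$, the collar contributes $\pm\frac12$ or $0$, independently of $v$.
\end{enumerate}
Your phrase ``an intersection number plus a rational boundary correction'' names this answer but supplies no mechanism to compute it.

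The same hyperplane observation shows that any term $\widetilde\psi^0(X')$ singular along a lift of $c$ pulls back to zero on that lift. This is because the contraction $\rho$ of $\nu$ with the radial field vanishes on hyperplanes through $0$. So your worry about integrating singular sparks over $c$ is moot, but only once you see this.

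Second, the convergence of $G^\ast$. You only promise it, and the proposed estimate would not give it. The series runs over all $T\in\Sym_m(F)$, and for $T$ not positive definite $|q^T|$ grows. So a factor $e^{-\pi\tr(Q(x)v)}$ is not decay there; pointwise, the relevant Gaussian is the majorant $e^{-2\pi\tr Q_z(xa)}$.

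More seriously, the loci $c(x)$ over all $x$ together are dense. Bounds that are uniform only away from $c(x)$ therefore give no control once summed over $x$. What you need are bounds on the coefficients $\beta(T,\varphi_f,v;c)$ themselves that are summable against $q^T$. The paper obtains these as follows:
\begin{enumerate}[label=(\alph*)]
\item Restricting to $\mathcal{D}'$ (Lemma \ref{lem:fos}) factors $G^\ast$ as a product of two series. One is the analogous series for $\mathbf{V}'$, where $c$ is the whole arithmetic manifold of the $m$-dimensional space $\mathbf{V}'$. The other is a positive definite theta series of $\mathbf{U}$.
\item After unfolding, the $\mathbf{V}'$-coefficients are orbital integrals over all of hyperbolic space. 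Lemma \ref{lem:ewa} evaluates them in closed form and bounds them by $\exp(-2\pi(\nm{Q(Xa)}_1-\tr Q(Xa)))$.
\item Combined with the polynomial bound on the number of orbits (Lemma \ref{lem:ail}), this gives absolute convergence.
\end{enumerate}
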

Let us touch briefly on the proof Theorem \ref{thm:ai} and the construction of the function $G^\ast(\tau;c)$. The point is that $I(\tau;s)+G^\ast(\tau;c)$ equals the integral over (a chain representing $s$) of the  theta series 
\[
    \Theta_\varphi(\tau) = \sum_{T\in\Sym_m(F)} \Theta_\varphi(T,v)q^T
\]
associated with the Kudla--Millson Schwartz $\varphi$, where here $v$ denotes the imaginary part of $\tau$. As in \cite{christensen2024linking,MR4570161} the idea is to study the family of differential forms $\Theta(T,\varphi^0,vt)$, with a variable $t>0$. For now, let us stick to the case of $T$ positive definite with $c$ and $c(T)$ not intersecting. As $t\to\infty$ we check that the forms $\Theta(T,\varphi^0,vt)$ converge, as currents, towards the delta distribution of the cycle $c(T)$. 
Based on this and further calculations, we obtain the linking numbers $\iota(T;s)$ as the limit $\lim_{t\to\infty}\int_s \Theta(T,\varphi^0,vt)$. 

To understand the remaining contribution, we consider a different Schwartz forms $\psi$, and the associated family of forms $\Theta_\psi(T,v)$. The main property of $\psi$ is the transgression equation
\[
    \frac{d}{dt} \Theta_\varphi(T,vt) = d\Theta_\psi(T,vt) \frac{1}{t}.
\]
Hence based on the construction in \cite{MR3783423, MR3935034} we are led to consider the form
\begin{equation}\label{eq:itd}
    \int_1^\infty \Theta_\psi(T,vt)\frac{dt}{t}
\end{equation}
which turns out to be smooth away from $c(T)$ and locally integrable on all of $M$. By the transgression equation and Stokes theorem, the integral of \eqref{eq:itd} over $c$ serves as the $T$th Fourier coefficient of $G^\ast(\tau;c)$. This works the same when $T$ is not positive definite, where $c(T)$ can be treated as the empty cycle. Here, a very convenient feature of the signature condition \eqref{eq:vst} is that $\varphi$ and $\psi$ vanish on all linearly dependent tuples of vectors in $\mathbf{V}$, so that we don't have to deal with special cycles of lower codimension. In the last degenerate case with $T$ positive definite and where $c(T)$ and $c$ intersect, one has to be much more careful with the order of integration and convergence issues. This leads us to instead define the $T$th Fourier coefficient of $G^\ast(\tau;c)$ as $\int_1^\infty \int_c \Theta_\psi(T,vt)\frac{dt}{t}$.

To prove convergence of the series $G^\ast(\tau;c)$ we use the naturality properties of $\psi$, together with a see-saw argument based on \cite{christensen2024linking} and \cite{MR4863919}. As in \cite{christensen2024linking} this leads to explicit formulas for $G^\ast$.

Actually, $G^\ast(\tau;c)$ is equally well-defined if $c$ is not null-homologous, so one could ask if there exist other interesting $q$-series, that really only depend on $c$ and which become modular when adding $G^\ast(\tau;c)$. We suggest an answer related to the differential cohomology of $M$, which is based on a current relation for \eqref{eq:itd} that comes from the transgression equation. Let us write $\Theta_{\widetilde\psi}(T,v) := -\int_1^\infty \Theta_\psi(T,vt)\frac{dt}{t}$.\footnote{This notation is mostly suggestive as there is no genuine Schwartz function $\widetilde\psi$.} We define the current of a locally integrable form $\alpha$ by $[\alpha](\eta) = \int_M \eta\wedge\alpha$, and the derivative of a current $C$ by $dC(\eta)=C(d\eta)$. We prove:
\begin{theorem}\label{thm:tse}
    $d[\Theta_{\widetilde\psi}(T,v)] = [\Theta_\varphi(T,v)] - \delta_{c(T)}.$
\end{theorem}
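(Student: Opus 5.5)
The plan is to use the transgression equation and truncate the $t$-integral. For $1\le R<\infty$ set
\[
\Theta_{\widetilde\psi}^{R}(T,v) := -\int_1^R \Theta_\psi(T,vt)\frac{dt}{t}.
\]
This is a smooth form on $M$: it is a finite integral of smooth forms depending smoothly on $t$. Differentiating under the integral and using
\[
\frac{d}{dt}\Theta_\varphi(T,vt)=d\Theta_\psi(T,vt)\frac1t,
\]
we get the exact identity
\[
d\Theta_{\widetilde\psi}^{R}(T,v)=\Theta_\varphi(T,v)-\Theta_\varphi(T,vR).
\]
Pairing with a compactly supported test form $\eta$ and applying Stokes on $M$ (no boundary terms, since $\eta$ has compact support) gives
\[
d[\Theta^R_{\widetilde\psi}(T,v)](\eta)=[\Theta_\varphi(T,v)](\eta)-[\Theta_\varphi(T,vR)](\eta),
\]
up to the sign convention $\pm$ for $d$ on currents. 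The sign should be fixed so that it matches the one built into the definition $dC(\eta)=C(d\eta)$. The theorem then follows from two limits as $R\to\infty$.

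First, $[\Theta^R_{\widetilde\psi}(T,v)]\to[\Theta_{\widetilde\psi}(T,v)]$ as currents. For this I will show that $\int_1^\infty |\Theta_\psi(T,vt)|\,\frac{dt}{t}$ is locally integrable on $M$, and then apply dominated convergence.

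Second, $[\Theta_\varphi(T,vR)]\to\delta_{c(T)}$ as currents. This is the convergence already described in the introduction. When $T$ is not positive definite, the claim is that the limit is $0$ (with $c(T)=\emptyset$). That follows from Gaussian decay: by the signature condition \eqref{eq:vst}, $\varphi$ and $\psi$ vanish on linearly dependent tuples, so the only contributions come from linearly independent $m$-tuples $x$ with $Q(x)=T$, and each is damped by $e^{-2\pi t\,\mathrm{tr}(v R(x,z))}$, where the majorant $R(x,z)\ge 0$ vanishes only on $c(T)$.

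Both limits are local computations. I would lift to the symmetric space $\mathbb{H}^n$ and write $\Theta_\varphi(T,vt)=\sum_{x}\varphi^0(\sqrt{t}\,x,z)$ over $\Gamma$-orbits of tuples with $Q(x)=T$. Near a point of $c(T)$ only finitely many $x$ contribute non-negligibly. For a single $x$, with $D_x\subset\mathbb{H}^n$ the totally geodesic codimension-$m$ locus it defines, the Kudla--Millson form in normal coordinates to $D_x$ is, up to exponentially small terms, a Gaussian in the normal directions of width $t^{-1/2}$ with total mass $1$ (this is the Thom form property). Hence it converges to $\delta_{D_x}$. For $\psi$ I will use the explicit Mathai--Quillen description: $\psi(\sqrt t x)$ behaves like $t^{1/2}|y|\,e^{-\pi t|y|^2}$ times bounded forms, where $y$ is the normal coordinate. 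Then
\[
\int_1^\infty t^{1/2}e^{-\pi t|y|^2}\frac{dt}{t}\ll |y|^{-1},
\]
so $\int_1^\infty|\Theta_\psi(T,vt)|\frac{dt}{t}\ll |y|^{-(m-1)}$, which is integrable in codimension $m$. Away from $c(T)$ everything is uniformly exponentially small in $t$, which gives smoothness there.

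The main obstacle is making this uniform: one has to control the full infinite sum over $x$ (not only the finitely many near a point) uniformly on compact sets. This is needed both for the dominated-convergence step and to justify differentiating under the sum and integral. My first attempt would be the standard majorant
\[
\sum_x \mathrm{poly}(t,x)\,e^{-\pi t\,\mathrm{tr}(v(Q(x)+2Q_z(x)))}
\]
summed over the lattice. For $t\ge1$ this is bounded by $e^{-ct\,\epsilon}$ times a convergent lattice sum for all $x$ with $\mathrm{dist}(z,D_x)\ge\epsilon$, which leaves only the finitely many near terms, and those are handled by the local normal-coordinate estimate above.
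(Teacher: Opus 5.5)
Your proposal is correct and is essentially the paper's argument (Proposition~\ref{prp:tce}, Lemma~\ref{lem:tcl}, Corollary~\ref{cor:dcc}). The paper runs the same truncate-in-$t$, Stokes, $t\to\infty$ argument, but for a single $X$ after pushing $\eta$ forward along the trivial fibration $u(X)\colon\mathcal{D}\to\mathbb{R}^m$ of Lemma~\ref{lem:tfb}; this makes your ``normal coordinates up to exponentially small terms'' step exact, and the sum over $X$ is then handled by the theta-majorant for the far terms plus a partition of unity.

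One slip: $\psi^0(Xt^{1/2})=t^{m/2}e^{-\pi t r^2}\rho$, where $\rho$ has linear coefficients, so the near-term integrand is $t^{m/2}|y|e^{-\pi t|y|^2}$ rather than $t^{1/2}|y|e^{-\pi t|y|^2}$. It is this exponent that gives your (correct) bound $\ll|y|^{1-m}$, in agreement with \eqref{eq:utf}.
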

This implies that the special cycles $c(T)$ can be upgraded to a class $\hat{c}(T,v)$ in the $(m-1)$th differential cohomology groups of $M$. When $T$ is not positive definite, $\hat{c}(T,v)$ is a smooth class \cite[Definition 1.10]{MR1993742}.

Letting $m' = n+1-m$, the differential cohomology groups have a natural pairing between degree $m-1$ and $m'-1$, valued in $\mathbb{R}/\mathbb{Z}$. For the special cycle classes, the presense of explicit representatives also give explicit representatives $\varsigma(T,v;\hat{c})\in \mathbb{R}$ for the pairing of two special cycle classes $\hat{c}(T,v)$ and $\hat{c}$, at least when the special cycles $c(T)$ and $c$ do not intersect. As with $\iota(T;s)$, the definition of $\varsigma(T,v;\hat{c})$ is extended to the case of $c$ and $c(T)$ intersecting and we consider the series
\[
    S(\tau;\hat{c}) := \sum_{T\in\Sym_m(F)} \varsigma(T,v;\hat{c}) q^T.
\]

\begin{theorem}
    The function $S(\tau;\hat{c})+(-1)^m G^\ast(\tau;c)$ transforms as a Hilbert--Siegel modular form of genus $m$, weight $\frac{n+1}{2}$, and level $1$.
\end{theorem}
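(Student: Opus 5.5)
We reduce the statement to the modularity of the Kudla--Millson theta series integrated over a cycle, exactly as for Theorem \ref{thm:ai}, but now using the explicit representatives of the differential cohomology pairing. Write $\hat c$ as the class represented by the spark $(\Theta_{\widetilde\psi'}, \omega, c)$ with $d[\Theta_{\widetilde\psi'}] = [\omega]-\delta_c$, where $\omega$ is the smooth curvature form of $\hat c$. Write $\hat c(T,v)$ as the class represented by $(\Theta_{\widetilde\psi}(T,v),\Theta_\varphi(T,v),c(T))$, which is legitimate by Theorem \ref{thm:tse}. The standard formula for the product of sparks then gives a real representative
\[
\varsigma(T,v;\hat c) = \int_M \Theta_{\widetilde\psi}(T,v)\wedge\omega + (-1)^m\int_c \Theta_{\widetilde\psi}(T,v),
\]
valid when $c$ and $c(T)$ are disjoint. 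The sign $(-1)^m$ comes from the degree conventions of the spark product, and we should double check it against \cite{MR1993742}. For the intersecting case we take this formula as the definition, interpreting $\int_c\Theta_{\widetilde\psi}(T,v)$ as $-\int_1^\infty\int_c\Theta_\psi(T,vt)\frac{dt}{t}$, as in the definition of $G^\ast$.

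The second summand is, by the construction of $G^\ast(\tau;c)$, precisely $-(-1)^m$ times its $T$th coefficient, multiplied by $q^T$ after summation. Hence
\[
S(\tau;\hat c)+(-1)^mG^\ast(\tau;c) = \sum_T \Big(\int_M\Theta_{\widetilde\psi}(T,v)\wedge\omega\Big) q^T .
\]
It then suffices to show that this series is modular. To do so we rewrite each coefficient as a current pairing. Using Theorem \ref{thm:tse} and $d\omega=0$, we compute
\[
\int_M\Theta_{\widetilde\psi}\wedge\omega = \int_M \Theta_{\widetilde\psi}\wedge \big(\omega-[\text{something exact}]\big).
\]
More directly, we pair $\Theta_{\widetilde\psi}(T,v)$ against the current equation of $\hat c$ itself. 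By symmetry of the spark pairing (up to the sign $(-1)^{mm'}$ and an integer), the pairing also equals
\[
\int_M \Theta_\varphi(T,v)\wedge\Theta_{\widetilde\psi'} \pm \int_{c(T)}\Theta_{\widetilde\psi'}
\]
modulo $\mathbb{Z}$. That version is less useful, however, because it mixes in the second variable.

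So we stay with the first expression and use $\omega$. Since $\omega$ is closed and $\Theta_{\widetilde\psi}(T,v)$ is a primitive of $\Theta_\varphi(T,v)-\delta_{c(T)}$, we instead organize everything as the $t$-integral
\[
\int_M\Theta_{\widetilde\psi}(T,v)\wedge\omega = -\int_1^\infty\int_M\Theta_\psi(T,vt)\wedge\omega\,\frac{dt}{t}.
\]
We then use the transgression equation and $d\omega = 0$: the quantity $\int_M\Theta_\psi(T,vt)\wedge\omega$ pairs a form with a closed form. The key point is that for $T$ not totally positive definite, or for the $t$-derivative generally, this reduces via Stokes to boundary terms that vanish on the compact manifold $M$ (assuming $M$ compact, which holds for the anisotropic case of simplest type with $F\neq\mathbb{Q}$ or anisotropic $\mathbf V$). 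Hence the $\omega$-term contributes $\sum_T(\int_M\Theta_{\widetilde\psi}(T,v)\wedge\omega)q^T$. This equals $-\int_1^\infty \big(\int_M \Theta_\psi(\tau t)\wedge\omega\big)\frac{dt}{t}$, the integral of the full $\psi$-theta series, with the $T=0$ and degenerate terms handled separately.

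The main obstacle is showing that this $\omega$-contribution is itself modular, or combines with the other terms to something modular. The plan is to choose $\omega$ as the harmonic representative (we may modify $\hat c$'s spark by a smooth exact term, which changes $\varsigma$ only by a coefficientwise quantity of the form $\int_M\Theta_\varphi(T,v)\wedge\beta$; that quantity is a theta integral and hence modular). With $\omega$ harmonic, the Kudla--Millson theta lift of $\Theta_\psi$ paired with a closed form should be exact-paired, so it vanishes: $\Theta_\psi(\tau t)$ is $d^\ast$-type, i.e.\ $\psi$ is a $\bar\partial$-primitive. To make this rigorous I would try to show $\int_M\Theta_\psi(\tau)\wedge\omega=0$ using the fact that $\Theta_\psi$ is $\mathrm{SO}(n,1)$-equivariantly the image of $\varphi$ under a lowering operator, so that the harmonic projection kills it. Failing that, I would fall back to the identity that $\frac{d}{dt}$ of the $\omega$-pairing of $\Theta_\varphi$ is $d$-exact and hence zero. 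Consequently $\int_M\Theta_\varphi(\tau t)\wedge\omega$ is independent of $t$, and the $\psi$-terms are then controlled by the lowering-operator relation. Convergence of the $t$-integrals and of the $q$-series would be taken from the estimates already established for $G^\ast$.
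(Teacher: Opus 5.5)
Your argument breaks at the first step: the expression you take for $\varsigma(T,v;\hat c)$ is not a representative of the spark pairing. Write $a=\Theta_{\widetilde\psi}(T,v)$, so $da=\Theta_\varphi(T,v)-\delta_{c(T)}$, and let $b$ be the spark of $\hat c$, so $db=\omega-\delta_c$. The two standard representatives of $\hat c(T,v)\star\hat c$ (equal modulo exact currents, with signs depending on conventions) are
\[
\int_M a\wedge\omega\pm\int_{c(T)}b \qquad\text{and}\qquad \int_M\Theta_\varphi(T,v)\wedge b\pm\int_c a .
\]
Each involves both sparks. Your formula $\int_M a\wedge\omega+(-1)^m\int_c a$ combines the first term of one with the second term of the other. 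Your discarded ``other version'' does the same in reverse: the second terms of your two formulas are interchanged.

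You can see directly that your formula cannot be right, because it does not involve $b$ at all. Replacing $b$ by $b+\beta$ with $\beta$ closed changes $\hat c$ by a flat class. It changes the true pairing by $\int_M\Theta_\varphi(T,v)\wedge\beta=\pm\int_{c(T)}\beta$, which is not an integer in general.

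The paper's $\varsigma$ is exactly the second representative, with $b=\alpha(Y,\widetilde\psi^0)$, $c=c(Y)$, and $\int_c a$ interpreted as $-\int_1^\infty\int_c\Theta_\psi(T,vt)\frac{dt}{t}$. With it the proof is one line. The second term is $-(-1)^m$ times the $T$th coefficient of $G^\ast$, so $S+(-1)^mG^\ast=\int_M\Theta_{\mathbf L}(\tau)\wedge\alpha(Y,\widetilde\psi^0)$. This transforms like $\Theta_{\mathbf L}$ by Lemma \ref{lem:tps}: $\alpha(Y,\widetilde\psi^0)$ is a fixed locally integrable form on the compact $M$ (Corollary \ref{cor:dcc}), and the theta series converges normally (Lemma \ref{lem:cot}), so you may integrate termwise. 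So the version you set aside, once its second term is corrected, is precisely the one that makes the theorem immediate.

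Because of the wrong starting formula you are left with $\sum_T\bigl(\int_M\Theta_{\widetilde\psi}(T,v)\wedge\omega\bigr)q^T$, and nothing you propose shows it is modular. Formally, for instance when $c\cap c(T)=\emptyset$, Theorem \ref{thm:tse} and $\omega=db+\delta_c$ give the following. Up to signs, this series is $\int_M\Theta_{\mathbf L}(\tau)\wedge b$, plus $G^\ast(\tau;c)$, plus the holomorphic series $\sum_T\bigl(\int_{c(T)}b\bigr)q^T$ of periods of the non-closed form $b$. Your argument does not control this last series.

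Your claimed vanishing of $\int_M\Theta_\psi(T,vt)\wedge\omega$ for harmonic $\omega$ is unfounded. The transgression equation controls $d\Theta_\psi$, not $\Theta_\psi$ itself. For example, for $m=1$ and $\omega=\mathrm{vol}$, a timelike $X$ contributes a multiple of $\int_{\mathcal H}u(X)e^{-\pi vt\,u(X)^2}\,\mathrm{vol}\neq0$, since $u(X)$ has constant sign on $\mathcal H$.

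Your bookkeeping when passing to a harmonic $\omega$ is also inconsistent with your own formula. There, $\omega\mapsto\omega+d\beta$ changes the coefficient by $\int_M a\wedge d\beta=\pm\bigl(\int_M\Theta_\varphi(T,v)\wedge\beta-\int_{c(T)}\beta\bigr)$. The $\delta_{c(T)}$ contribution is not a theta integral. Finally, your fallback observation that $\int_M\Theta_\varphi(T,vt)\wedge\omega$ is independent of $t$ is true, but it says nothing about the $\Theta_{\widetilde\psi}$ term.
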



\subsection*{Outlook}

Many of the ideas in this paper very plausibly admit extensions to quadratic spaces of any signatures. The signature $(n,1)$-case treated in this paper has the advantage that very explicit formulas are accessible, but we expect that general signatures and genus can also be treated with help of the Mathai--Quillen formalism. Presumably the technical issues concerning intersections of cycles are more challenging in the general case.

We think it is worth pointing out that in the case where $s$ is closed the methods of this paper illustrate another way to prove the main results of \cite{MR1079646} in the case of signature $(n,1)$ which circumvents technical coordinate-based calculation with the Fock-model for the Weil representation.

The series $I(\tau;s)$ and $S(\tau;\hat{c})$ are somewhat reminiscent of (parts of) the arithmetic theta functions appearing in Kudla's programme. For example, consider the (level 1) Shimura curve $X$ associated with an indefinite quaternion algebra $B$ over $\mathbb{Q}$. For every $t\in \mathbb{Z}_{>0}$ let $z(t)$ denote the Heegner divisor on $X$ of discriminant $t$. In \cite{MR2220359} it is explained how to upgrade $z(t)$ to cycles $\hat{z}(t,v)$, $t\in\mathbb{Z}$, $v>0$, in the arithmetic Chow group of (an integral model of) $X$, and it is proved that the generating series
\[
    \widehat\phi_1(\tau) := \sum_{t\in \mathbb{Z}} \hat{z}(t,v) q^t
\]
(valued in the arithmetic Chow group) transforms as a classical modular form of weight $3/2$. This involves studying the Arakelov height pairing $\langle\;,\;\rangle$ of $\hat\phi_1$ with the various different arithmetic Chow group elements. The Arakelov height pairing also admits a decomposition $\langle\;,\;\rangle = \langle\;,\;\rangle_f+\langle\;,\;\rangle_\infty$ into a finite and an archimedian part. The infinite part $\langle\widehat{\phi}_1(\tau),\hat{z}\rangle_\infty$, for $\hat{z}$ an auxiliary arithmetic Chow group Heegner cycle, is defined in a way that is very analogous to the series $S(\tau;\hat{c})$, with Green functions replacing the sparks in this paper, and the coefficients of $G^\ast(\tau;c)$ are very analogous to the values of Green functions at $z$. It is probably dubious to try to take more away from this analogy, as there is obviously no arithmetic intersection theory available in the current setting. Nevertheless, we find the purely archimedean analogy curious.



\subsection{Notation}
Throughout $F$ will denote a number field, which besides Section \ref{sec:5.1} will always be totally real. We let $\mathbb{A}$, $\mathbb{A}_f$, and $\mathbb{A}_\infty\cong F \otimes_\mathbb{Q} \mathbb{R}$, denote the ring of adeles, finite adeles, and infinite adeles, respectively. We let $\Sym_m(\mathbb{R})_+$ and $\Sym_m(\mathbb{A}_\infty)_+$ denote the (totally) positive definite $m\times m$ symmetric matrices. We let $\GL_m^+(\mathbb{R})$ and $\GL_m^+(\mathbb{A})$ denote the matrices with (totally) positive definite determinant.

\subsection{Acknowledgements}

We are grateful to Luis Garcia and Jens Funke for many insightful thoughts and ideas that helped form this project. In particular, we thank Jens Funke for suggesting the spark equation of Theorem \ref{thm:tse}. We also thank Claudia Alfes and Lennart Gehrmann for their questions concerning extensions to higher dimensions and Romain Branchereau, Edgar Assing, and Noam Kimmel for various helpful discussions. We thank Max Planck Institute for Mathematics in Bonn
for its hospitality and financial support.

We used AI to help with literature search, and we acknowledge that AI found the convenient integral identity \eqref{eq:aii} and suggested the short proof strategy in Lemma \ref{lem:ail}.


\section{Special cycles}

\label{sec:1}

\subsection{Symmetric spaces}

Let $F$ be a totally real number field and let $\sigma_1,\ldots,\sigma_r$ denote the archimedean places of $F$. Let $(\mathbf{V},Q)$ be a quadratic space over $F$, with $(\;,\;)$ the associated bilinear form for which $\frac{1}{2}(X,X) = Q(X)$. For $i=1,\ldots,r$ we let $V^{(i)}$ be the completion of $\mathbf{V}$ corresponding to $\sigma_i$. We assume that $V:=V^{(1)}$ has signature $(n,1)$ and that $V^{(i)}$ has signature $(n+1,0)$ for $i\neq1$, and we fix an orientation of $V$. It will turn out to be very convenient to identify $F$ with a subfield of $\mathbb{R}$ by means of $\sigma_1$, and similarly to identify $\mathbf{V}$ with a subspace of $V$.



Let
\[
    \mathcal{D} := \{z\in V: Q(z) =- 1 \}
\]
denote the two-sheeted hyperboloid in $V$. By construction, the tangent space at $z\in \mathcal{D}$ is 
\[
    T_z \mathcal{D} = \{x\in V: (x,z)=0\},
\]
so that $V= T_z \mathcal{D}\oplus \mathbb{R}z$. We orient $\mathcal{D}$ by declaring that the basis $Y_1,\ldots, Y_n\in T_z \mathcal{D}$ is positively oriented precisely if $Y_1,\ldots,Y_n,z$ is a positively oriented basis for $V$. The restriction of $Q$ and $(\;,\;)$ to the tangent bundle of $\mathcal{D}$ determines a positive definite metric. We fix a connected component $\mathcal{H}\subset \mathcal{D}$ which provides a model for hyperbolic $n$-space with this metric. 

Let $\mathbf{G}:=\Res_{F / \mathbb{Q}}\OO(\mathbf{V})$, and for $i=1,\ldots,r$ let $G^{(i)}= \OO(V^{(i)})$. Then 
\[
    \mathbf{G}(\mathbb{R}) \cong \prod_{i = 1}^{r} G^{(i)} \approx \OO(n,1)\times \prod_{i = 2}^{r} \OO(n + 1). 
\]
Let $G^0\subset \mathbf{G}(\mathbb{R})$ be the connected component of the identity. 
The obvious actions identify $G^{(1)}$ and the connected component of its identity with the isometry group of $\mathcal{D}$ and the group of orientation preserving isometries of $\mathcal{H}$, respectively. In particular $\mathbf{G}(\mathbb{R})$ and $G^0$ act properly on $\mathcal{D}$ and $\mathcal{H}$, respectively. 

Let $\Gamma\subset G^0\cap \mathbf{G}(\mathbb{Q})$ be an arithmetic subgroup. We denote by $M_\Gamma = \Gamma\backslash \mathcal{H}$ the associated real analytic hyperbolic orbifold  of dimension $n$. As is well known, when $\Gamma$ is torsion free $M_\Gamma$ is a genuine hyperbolic manifold with no orbifold points. We will often abuse notation and simply write $M$ rather than $M_\Gamma$.

Let $\Iso_{\mathbf{V}}$ denote the variety, defined over $F$, of isotropic lines in $\mathbf{V}$. 
Then $M_\Gamma$ is closed if and only if $\Iso_{\mathbf{V}}(F)$ is empty, that is, if $\mathbf{V}$ is anisotropic. This is always the case if $F \neq \mathbb{Q}$ due to the signature condition on $\mathbf{V}$.

\subsection{General special cycles}

Let $U\subset V$ be a subspace for which the restriction of $Q$ is positive definite. Let $m= \dim U$ and $ n := n-m$, so that $U^\perp$ is of signature $(p,1)$.  We let $\mathcal{D}_U = \mathcal{D}\cap U^\perp$ denote the two-sheeted hyperboloid in $U^\perp$. For $z\in \mathcal{D}_U$ we see that
\begin{equation}\label{eq:tsd}
    T_z \mathcal{D} = T_z \mathcal{D}_U \oplus U.
\end{equation}
This gives a trivialization of the normal bundle $N_{\mathcal{D}_U/\mathcal{D}}$ with fiber $U$. In particular, a choice of orientation of $U$ determines an orientation of $\mathcal{D}_U$ compatible with the orientation of $\mathcal{D}$ through \eqref{eq:tsd}. We let 
\[
    c_U : = \mathcal{D}_U \cap \mathcal{H},
\]
which is a totally geodesic complete submanifold, and hence (a model for) hyperbolic $p$-space. 

To obtain cycles in $M$ we consider the following data. Let $F'$ be a totally real extension of $F$, and let ${\sigma}'_{i,j}$, for $i=1,\ldots,r$ and $j=1,\ldots,s$, denote the archimedean places of $ F$ with $\sigma'_{i,j}$ over $\sigma_i$. We identify $F'$ with a subfield of $\mathbb{R}$ by means of $\sigma'_{1,1}$. Let $(\mathbf{V}',Q')$ be a quadratic space over $F'$ satisfying the same signature condition as $V$
\[
    \signature V'^{(i,j)} = 
    \begin{cases} 
        (n',1) & i = j = 1 \\
        (n' + 1,0) & \text{else}
    \end{cases} 
    \hspace{2cm} n' \leq n
\]
($V'^{(i,j)}$ denotes the completion of $\mathbf{V}'$ at $\sigma_{i,j}$). We consider an isometric embedding of quadratic spaces over $F$
\[
    \eta:\Res_{ F' / F} \mathbf{V}' \hookrightarrow \mathbf{V}.
\]
Let $\mathbf{U}\subset \mathbf{V}$ denote the orthogonal complement of the image of $\eta$, and let $U\subset V$ denote the orthogonal complement of $V':=V'^{(1,1)}$. Note that $U$ is \emph{not} the completion $U^{(1)}$ of $\mathbf{U}$ at $\sigma_1$, rather
\begin{equation}\label{eq:vtu}
    U = \bigoplus_{j\neq 1}  V'^{(1,j)} \oplus U^{(1)}.
\end{equation}
Let
\begin{align*}
    \mathbf{G}' &:= \Res_{ F' / \mathbb{Q}}\OO({\mathbf{V}'})\times \Res_{F /\mathbb{Q}}\OO(\mathbf{U}) \subset \mathbf{G},
    \\
    \Gamma' &:= \Gamma \cap \mathbf{G}'(\mathbb{Q}).
\end{align*}
Hence $\mathbf{G}'(\mathbb{Q})$, resp. $\Gamma'$, is the subgroup of elements $g\in\mathbf{G}(\mathbb{Q})$, resp. $\gamma\in\Gamma$, which stabilize  the image of $\eta$ and preserve $Q'$. 
\begin{lemma}
    With notation as above:
    \begin{enumerate}[label=(\alph*), font=\upshape]
        \item $\mathbf{G}'(\mathbb{Q})$, resp. $\Gamma'$, coincides with the setwise stabilizer of $U$ in $\mathbf{G}(\mathbb{Q})$, resp. $\Gamma$;
        \item the map $\Gamma'\backslash c_U\to M_\Gamma$ is proper.
    \end{enumerate}
\end{lemma}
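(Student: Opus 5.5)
For part (a) the plan is to compare $\mathbf{G}'(\mathbb{Q})$ with the stabilizer of $U$ through the image $\eta(\Res_{F'/F}\mathbf{V}')=\mathbf{U}^\perp$. First, $\mathbf{G}'(\mathbb{Q})$ stabilizes $U$. An element $g\in\mathbf{G}'(\mathbb{Q})$ acts $F'$-linearly on $\mathbf{V}'$ and preserves $\mathbf{U}$. After base change to $\mathbb{R}$ via $\sigma_1$, $\mathbf{V}'\otimes_{F,\sigma_1}\mathbb{R}=\bigoplus_j V'^{(1,j)}$ is the decomposition into eigenspaces for the $F'$-action. So $g$ preserves each $V'^{(1,j)}$, and hence it preserves $U$ by \eqref{eq:vtu}.

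Conversely, let $g\in\mathbf{G}(\mathbb{Q})$ stabilize $U$. Then $g$ also stabilizes $V'=U^\perp$, which is the $\sigma'_{1,1}$-component $V'^{(1,1)}$. The key step is to recover the $F$-rational object $\mathbf{V}'$ from $V'$. I would show that $\mathbf{U}^\perp$ is the smallest $F$-rational subspace of $\mathbf{V}$ whose $\sigma_1$-completion contains $V'$. Equivalently, $\mathbf{U}=\mathbf{V}\cap U$, i.e. the $F$-points of $U$.

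To see this, note that $\mathbf{V}\cap U$ is an $F$-subspace containing $\mathbf{U}$. Its intersection with $\mathbf{U}^\perp\cong\mathbf{V}'$ (as an $F$-space) is an $F$-subspace $W$ whose $\sigma_1$-completion avoids $V'^{(1,1)}$. Since $\sigma'_{1,1}$ embeds $F'$ in $\mathbb{R}$, a nonzero $x\in\mathbf{V}'$ has nonzero component in $V'^{(1,1)}$: that component is just $x$ seen through $\sigma'_{1,1}$. Hence $W=0$ and $\mathbf{V}\cap U=\mathbf{U}$.

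Since $g$ is rational and stabilizes $U$, it stabilizes $\mathbf{V}\cap U=\mathbf{U}$, and so also $\mathbf{U}^\perp$. It remains to show that $g$ is $F'$-linear on $\mathbf{U}^\perp$, i.e. commutes with the $F'$-action, and hence preserves $Q'$. Here I would use that $g$ preserves $V'^{(1,1)}$. For $a\in F'$, the element $g a g^{-1}a^{-1}$ is a rational endomorphism of $\mathbf{V}'$ and it acts trivially on $V'^{(1,1)}$. The same argument as before then shows that $gag^{-1}-a$ vanishes, since its kernel is a rational subspace containing $V'^{(1,1)}$. Thus $g$ commutes with $F'$. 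An $F'$-linear isometry of the trace form $\mathrm{tr}_{F'/F}\circ Q'$ preserves $Q'$, by nondegeneracy of the trace pairing. So $g\in\mathbf{G}'(\mathbb{Q})$. Intersecting with $\Gamma$ gives the statement for $\Gamma'$.

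For part (b), I would use the standard criterion that $\Gamma'\backslash c_U\to\Gamma\backslash\mathcal{H}$ is proper when $\{\gamma\in\Gamma:\gamma c_U\cap K\neq\emptyset\}$ is finite modulo $\Gamma'$ for each compact $K$. I expect this to be the main obstacle, and I would approach it through arithmeticity. The set $\Gamma\cdot U$ is the $\Gamma$-orbit of the rational datum $(\mathbf{U},\mathbf{V}')$, and $c_U$ meets $K$ only if the vectors of a fixed $\mathbf{L}$-lattice in $\mathbf{U}$ have bounded majorant norm at points of $K$.

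Concretely, choose a basis $u_1,\dots,u_k$ of $\mathbf{U}\cap\mathbf{L}$ and also basis vectors of $\mathbf{V}'\cap\mathbf{L}$ over $\mathcal{O}_{F'}$. If $\gamma c_U$ meets $K$, then the Gram data of $\gamma u_i$ relative to $z\in K$ are bounded at all archimedean places. At the places $\sigma_i$ with $i\neq1$ the form is definite, so boundedness is automatic. Since the $\gamma u_i$ are lattice vectors, only finitely many tuples $(\gamma u_i)$ occur. By (a), the tuple together with the $F'$-structure determines $\gamma$ modulo $\Gamma'$, which gives finiteness. If needed, I would instead cite the general fact (Borel, or Kudla--Millson \cite{MR651982}) that for a reductive $\mathbb{Q}$-subgroup $\mathbf{G}'$ with $\Gamma'=\Gamma\cap\mathbf{G}'$, the map $\Gamma'\backslash G'^0x\to\Gamma\backslash\mathcal{H}$ is proper.
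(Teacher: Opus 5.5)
\paragraph{Part (a).} Your argument is correct, but it takes a different route from the paper. The paper base-changes to $F'$. It splits $(\mathrm{Res}_{F'/F}\mathbf{V}')(F')=\bigoplus_\alpha\mathbf{V}'(K_\alpha)$ along the factors of $F'\otimes_F F'$, and identifies the summand of the multiplication map $F'\otimes_FF'\to F'$ with $\mathbf{V}(F')\cap U^\perp$, which $\gamma$ stabilizes. It then observes that $\mathbf{V}'(F')\to$ this summand is an $F'$-linear $Q'$-isometry, so $F'$-linearity and $Q'$-invariance come out in one stroke.

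You stay in the real completion at $\sigma_1$ instead:
\begin{itemize}
\item a nonzero vector of $\eta(\mathbf{V}')$ has nonzero $V'^{(1,1)}$-component, which gives $\mathbf{V}\cap U=\mathbf{U}$;
\item $F'$-linearity follows from $g$ preserving $V'^{(1,1)}$;
\item $Q'$ is recovered from $\mathrm{tr}_{F'/F}\circ Q'$ by nondegeneracy of the trace.
\end{itemize}
This is more elementary. It also makes explicit that $g$ stabilizes $\mathbf{U}$ and $\eta(\mathbf{V}')$, which the paper leaves implicit. One small wrinkle: ``the same argument'' does not directly show that a rational subspace of $\mathbf{U}^\perp$ whose completion \emph{contains} $V'^{(1,1)}$ is everything. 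To get this, apply your earlier argument to its orthogonal complement in $\mathbf{U}^\perp$, whose completion lies in $\bigoplus_{j\neq1}V'^{(1,j)}$, or use Galois equivariance.

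\paragraph{Part (b).} The paper just says ``Standard''. Your fallback citation, properness of $\Gamma_H\backslash H(\mathbb{R})\to\Gamma\backslash G(\mathbb{R})$ for reductive $\mathbb{Q}$-subgroups $H$, is therefore exactly what is used. Your hands-on argument is complete when $F'=F$, since then $\mathrm{Stab}_\Gamma(\mathbf{U})=\Gamma'$.

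For $F'\neq F$, however, it has a gap. The bounded lattice vectors $\gamma u_i$ only determine $\gamma\mathbf{U}$, i.e.\ $\gamma$ modulo $\mathrm{Stab}_\Gamma(\mathbf{U})$. That group contains $\Gamma\cap\OO(\mathbf{U}^\perp)$, essentially a lattice in $\OO(N,1)$ with $N=s(n'+1)-1>n'$, so it has infinite index over $\Gamma'$. You then invoke ``the $F'$-structure'' without showing that its $\Gamma$-translates take only finitely many values. The $\mathcal{O}_{F'}$-basis vectors $e_k$ of $\mathbf{V}'$ cannot supply this: $(\gamma e_k,z)=(e_k,\gamma^{-1}z)$ is unbounded along the noncompact $c_U$.

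The fix is to use a rational tensor rather than vectors.
\begin{itemize}
\item Let $A\in\mathrm{End}_F(\mathbf{V})$ act as multiplication by a primitive element $\theta\in\mathcal{O}_{F'}$ on $\eta(\mathbf{V}')$ and as $0$ on $\mathbf{U}$. Its conjugates $\gamma A\gamma^{-1}$ lie in a fixed lattice.
\item Suppose $z=\gamma z_0\in K$ with $z_0\in c_U$. The majorant operator norm of $\gamma A\gamma^{-1}$ at $z$ equals that of $A$ at $z_0$. This is $\max_j|\sigma'_{1,j}(\theta)|$, because $A$ is scalar on $U^{(1)}\oplus\bigoplus_jV'^{(1,j)}$, and this decomposition remains orthogonal for the majorant at $z_0\in V'^{(1,1)}$.
\item At $\sigma_i$ with $i\neq1$, boundedness is automatic.
\item Hence only finitely many $\gamma A\gamma^{-1}$ occur. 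If $\gamma_1A\gamma_1^{-1}=\gamma_2A\gamma_2^{-1}$, then $\gamma_1^{-1}\gamma_2$ preserves $\ker A=\mathbf{U}$ and commutes with $\theta$ on $\mathbf{U}^\perp$. By your trace-form step from (a) it lies in $\Gamma'$.
\end{itemize}
This is precisely the Borel--Harish-Chandra argument behind the standard fact you cite.
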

\begin{proof}
(a): It is clear from \eqref{eq:vtu} that $\mathbf{G}'(\mathbb{Q})$ stabilizes $U$. 

Now, let $\gamma\in \mathbf{G}(\mathbb{Q})$ be an element stabilizing $U$. In general, if $K/F$ is any field extension with
$ F' \otimes_F K = \prod_\alpha  K_\alpha$ for field extensions $ K_\alpha /  F'$, then one has an orthogonal decomposition
\[
    (\Res_{ F' / F} \mathbf{V}') (K) = \bigoplus_{\alpha}\mathbf{V}'(K_\alpha).
\]
For $K=F'$ there is a particular summand $\mathbf{V}'(F')$ corresponding to the trivial extension of $ F'$. This summand is also equal to $\mathbf{V}( F')\cap V'=\mathbf{V}( F')\cap U^\perp$, and hence is stabilized by $\gamma$. On the other hand, we may embed $\mathbf{V}'( F')= (\Res_{ F' / F} \mathbf{V}')(F)$ in $(\Res_{ F' / F}\mathbf{V}')( F')$ by extension of scalars, and the resulting map 
\[
    \mathbf{V}'( F') \hookrightarrow(\Res_{ F' / F} \mathbf{V}')( F') \to \mathbf{V}'( F')
\]
preserves $Q'$.



(b): Standard.

\end{proof}


More generally, if $\Gamma_0'\subset \Gamma'$ is a finite index subgroup, then we refer to the map 
\[
    c'_0:\Gamma_0'\backslash c_U\to M_\Gamma
\]
as a special geodesic cycle of general type. If $\Gamma$ is torsion-free, then this is a totally geodesic immersion. We see that the normal bundle of $c'_0$ is isomorphic to the bundle associated to the representation of $\Gamma'_0$ on $U$. In particular, if the image of $\Gamma'_0$ in $\OO(U)$ is contained in $\SO(U)$, then $\Gamma'_0\backslash c_U$ acquires an orientation from the orientation of $c_U$. 

If $\Gamma'_0$ acts trivially on $U$, which is only possible if $ F' = F$, then we obtain a trivialization of the normal bundle of $c'_0$ with fiber $U$. This is the classical case of special cycles considered in for instance \cite{MR651982,MR1079646}.

\begin{remark}
    The results of \cite{MR5051992} suggest that all complete, finite volume, totally geodesic, immersed submanifolds of $M$ arise in the manner described above.
\end{remark}

\subsection{Geodesics}

The case $ n' =1$, in which $c_U$ is a geodesic, is particularly nice. Although it is not absolutely necessary for the later parts of this paper, we pause to explain how our setup relates with the more classical viewpoint of non-parabolic elements in $\Gamma$. 

We got 
\[
    \mathbf{G}'(\mathbb{R}) \cong \prod_{\substack{1\leq i\leq r \\ 1\leq j\leq s}} \OO(V'^{(i,j)}) \times \prod_{1\leq i\leq r} \OO(U^{(i)}) \approx \OO(1,1)\times \OO(2)^{rs - 1}\times \OO(n + 1 - 2s)^r.
\]
If $\mathbf{V}'$ is isotropic, then $\Gamma'$ is a finite group (acting trivially on $c_U$) and the image of $c_U$ in $M_\Gamma$ is an infinite geodesic connecting the two cusps corresponding to the rational isotropic lines of $\mathbf{V}'$. If $\mathbf{V}'$ is anisotropic then the image of $c_U$ is a closed geodesic. In that case, the image of $\Gamma'$ in $\OO( V')$ is either infinite cyclic or infinite dihedral. 




On the other hand, let $\gamma\in \Gamma$ be any element of infinite order stabilizing a geodesic line in $\mathcal{H}$. We then claim that $\gamma$ belongs to $ \Gamma'$ for some isometry $(\eta,\mathbf{V}')$. To see this, let the geodesic line be $c_U$ and let $\ell$ and $\widetilde{\ell}$ denote the two isotropic lines of $U^\perp$. These are eigenspaces for $\gamma$ with real algebraic eigenvalues $\lambda$ and $\lambda^{-1}$, respectively. The remaining eigenvalues (of $\gamma|_U\in\OO(U)$) are then either $\pm1$ or come in complex conjugate pairs with absolute value $1$. Let 
\[
    \lambda_1 = \lambda, \;\lambda_2,\;\ldots, \;\lambda_s
\]
denote the $\Gal(\overline{\mathbb{Q}} / F)$-conjugates of $\lambda$. The corresponding eigenspaces span a $F$-rational subspace, and we take $\mathbf{U}$ to be its orthogonal complement. Let 
\[
    F' : = F(\lambda + \lambda^{ - 1}), \hspace{1cm}  E' := F(\lambda)
\]
Then $ F'$ is a totally real extension of degree $s$ over $F$, as $\lambda_j+\lambda_j^{-1}\in \mathbb{R}$, and $E'$ is a quadratic extension of $F'$. Let $u\in\ell$ be an $E'$-rational basis vector, and let $x\mapsto \tilde x$ denote the nontrivial automorphism of $E'/F'$. Then we see that
\[
    U^\perp \cap \mathbf{V}(F') = \{\;xu + \tilde x\tilde u: x\in  E'\;\}.
\]
We thus take $\mathbf{V}'$ to be the two-dimensional quadratic space on $E'$ with the quadratic form $x\mapsto (u,\tilde u) \Nm_{ E' /  F'}(x)$. To define the isometry $\eta$, let $\alpha_j: E'\to F(\lambda_j)$ be the unique homomorphism over $F$ with $\alpha_j(\lambda)=\lambda_j$, and let $u_j$ and $\tilde u_j$ denote the image of $u$ and $\tilde u$ under the map $\mathbf{V}(E')\to \mathbf{V}(F(\lambda_j))$ induced by $\alpha_j$. Then we set
\[
    \eta(x) = \sum_{j = 1}^s \alpha_j(x) u_j + \alpha_j(\tilde x)\tilde u_j, \hspace{1cm} x \in  E'.
\]
It is not hard to see that $\gamma\in \mathbf{G}'(\mathbb{Q})$ for this pair $(\eta,\mathbf{V}')$.

\begin{example}
    Let $\mathbf{V}$ be the quadratic space over $F=\mathbb{Q}$ with the underlying space $\{X\in M_2(\mathbb{Q}): \tr(X)=0\}$ and quadratic form $Q(X)=-\det X$ of signature (2,1). The group $\PSL_2$ acts by isometries on $\mathbf{V}$ by the formula
    \[
        g\cdot X = gX g^{ - 1}, \hspace{1cm} g\in \PSL_2,\; X\in \mathbf{V},
    \]
    which gives an embedding of algebraic groups $\PSL_2\subset \mathbf{G}$. We have $\PSL_2(\mathbb{R}) = G^0$, and the standard arithmetic subgroup $\Gamma:=\PSL_2(\mathbb{Z})$. An isometry between the Poincar\'e upper half-plane and $\mathcal{H}$ is given by
    \[
        \{z =x + yi\in \mathbb{C}: x\in \mathbb{R}, y\in \mathbb{R}_{ > 0}\} \cong \mathcal{H}, \hspace{1cm} z\mapsto  \frac{1}{y}\begin{pmatrix} - x & |z|^2 \\ - 1 & x\end{pmatrix},
    \]
    and this intertwines the usual action of $\PSL_2(\mathbb{R})$ and that of $G^0$. We have an isomorphism of varieties
    \begin{equation}\label{eq:pli}
        \mathbb{P}^1\cong\Iso_\mathbf{V},\hspace{1cm} (a:b)\mapsto \Span \begin{pmatrix} -ab & a^2 \\ - b^2 & ab\end{pmatrix} 
    \end{equation}
    which is equivariant for the action of $\PSL_2$ on the left side and $\mathbf{G}$ on the right side.
    
    Let $\gamma\in \Gamma$. The element $\gamma$ stabilizes a geodesic line if and only if $|\tr\gamma|>2$, that is, if $\gamma$ is hyperbolic. In that case, let $a,\tilde a\in \mathbb{R}$ be the two fixed points of $\gamma$ and
    $\varepsilon,\varepsilon^{-1}$ the corresponding eigenvalues of $\gamma$, when acting on $\mathbb{Q}^2$, for the eigenvectors $(a,1)$ and $(a',1)$ respectively.\footnote{Technically $\varepsilon$ and $\varepsilon^{-1}$ are only defined up to sign.} By \eqref{eq:pli} we see that the two isotropic lines in $V$ fixed by $\gamma$ are spanned by $u:=(\begin{smallmatrix}
        -a & a^2 \\ -1 & a
    \end{smallmatrix})$ and $\tilde u := (\begin{smallmatrix}
        -\tilde a & \tilde a^2 \\ -1 & \tilde a
    \end{smallmatrix})$, and an easy computation shows that the corresponding eigenvalues of $\gamma$, when acting on $V$, are $\lambda = \varepsilon^2$ and $\lambda^{-1}=\varepsilon^{-2}$. In particular, $F'=\mathbb{Q}$ and $E'$ is the real quadratic number field generated by any one of $a$, $\tilde a$, $\varepsilon$, or $\lambda$.

\end{example}



\begin{example}
    let $K$ be an imaginary quadratic field and $\mathcal{O}_K$ its ring of integers. let $\mathbf{V}$ be the quadratic space over $F=\mathbb{Q}$ with underlying space and quadratic form 
    \[
        \{X\in M_2(K): X+\overline{X}^a=0\}, \hspace{1cm} Q(X) = -\det X
    \]
    ($x^a$ denotes the adjugate of $x$). The signature is then $(3,1)$. The group $\Res_{K / \mathbb{Q}}\PSL_2$ acts by isometries on $\mathbf{V}$ by $g\cdot X = gX \overline{g}^{ - 1}$ and this gives an embedding $\Res_{K / \mathbb{Q}}\PSL_2\subset \mathbf{G}$. We have $\PSL_2(\mathbb{C})=G^0$, and the Bianchi group $\Gamma:=\PSL_2(\mathcal{O}_K)$. One can also give an isometry between the Poincar\'e upper half-space and $\mathcal{H}$ by
    \[
        \{z =x + yj\in \mathbb{H}: x\in \mathbb{C}, y\in \mathbb{R}_{ > 0}\} \cong \mathcal{H}, \hspace{1cm} z\mapsto  \frac{1}{y}\begin{pmatrix} - x & |z|^2 \\ - 1 & x\end{pmatrix},
    \]
    and this intertwines the action of $\PSL_2(\mathbb{C})$ by quaternionic linear fractional transformations and that of $G^0$. We have an isomorphism of varieties
    \begin{equation}\label{eq:rpli}
        \Res_{K /\mathbb{Q}}\mathbb{P}^1\cong\Iso_\mathbf{V},\hspace{1cm} (\alpha:\beta)\mapsto \Span \begin{pmatrix} - \alpha \overline{\beta} & |\alpha|^2 \\ - |\beta|^2 & \overline{\alpha}\beta\end{pmatrix} .
    \end{equation}
    which is equivariant for the action of $\Res_{K/\mathbb{Q}}\PSL_2$ on the left and $\mathbf{G}$ on the right.

    Now let $\gamma\in \Gamma:=\PSL_2(\mathcal{O}_K)$, $t=\tr\gamma$, and assume that $|t|>2$. 
    Let $\alpha,\tilde \alpha\in \mathbb{C}$ be the two fixed points of $\gamma$ and $\varepsilon$, $\varepsilon^{-1}$ the corresponding eigenvalues for $(\alpha,1)$ and $(\alpha',1)$. Then $\alpha$, $\tilde \alpha$, and $\varepsilon$ belong to the quadratic extension $L=K(\sqrt{\delta})$ of $K$, where $\delta=t^2-4$. By \eqref{eq:rpli} the isotropic lines in $V$ fixed by $\gamma$ are then spanned by
    \[
        u: =\begin{pmatrix}
        -\alpha & |\alpha|^2 \\ -1 & \overline{\alpha}
    \end{pmatrix}, \hspace{.5cm} \tilde u : =\begin{pmatrix}
        -\tilde \alpha & |\tilde \alpha|^2 \\ -1 & \overline{\tilde \alpha}
    \end{pmatrix},
    \]
    and it is not hard to compute that the corresponding eigenvalues of $\gamma$, when acting on $V$, are $\lambda=|\varepsilon|^2$ and $\lambda^{-1}=|\varepsilon|^{-2}$. Moreover, a direct computation gives $|\varepsilon|^2+|\varepsilon|^{-2}=(|t|^2+|\delta|) / 2$ so $F' = \mathbb{Q}(|\delta|)$.

    Thus $F'=\mathbb{Q}$ if and only if $|\delta|\in \mathbb{Q}$. If that is the case, then $\overline{L}=L$ and so $L$ is a biquadratic extension with imaginary quadratic subfields $K$ and $\mathbb{Q}(\sqrt{\delta})$ and real quadratic subfield $E'=\mathbb{Q}(|\epsilon|^2)$. By another similar calculation  $\varepsilon / \overline{\varepsilon}+\overline{\varepsilon} / \varepsilon = (|t|^2-|\delta|) /2$, so $\varepsilon / \overline{\varepsilon}$ is a $4$th or $6$th root of unity, as those are the only numbers of absolute value $1$ belonging to quadratic number fields. 

    If $|\delta|\not\in \mathbb{Q}$, then $E'=\mathbb{Q}(|\varepsilon|^2)$ and $\mathbb{Q}(\varepsilon / \overline{\varepsilon})$ are (Galois-conjugate) quartic ATR number fields. Their common Galois closure is $M:=\mathbb{Q}(|\varepsilon|^2,\varepsilon / \overline{\varepsilon}) = E' K = L\overline{L}$ which is a dihedral number field of degree $8$.

\begin{figure}[h]
    \centering
\begin{tikzpicture}[node distance = .6cm, auto]
\node (Q) {$\mathbb{Q}=F=F'$};
\node (A) [above of=Q, above of=Q, left of=Q, left of=Q, left of=Q] {$\mathbb{Q}(\sqrt{\delta})$};
\node (E) [above of=Q, above of=Q] {$E'$};
\node (K) [above of=Q, above of=Q, right of=Q, right of=Q, right of =Q] {$K$};
\node (L) [above of=Q, above of=Q, above of=Q, above of=Q] {$L$};
\draw[-] (Q) to node {} (A);
\draw[-] (Q) to node [swap] {} (E);
\draw[-] (Q) to node [swap] {} (K);
\draw[-] (A) to node  {} (L);
\draw[-] (E) to node  {} (L);
\draw[-] (K) to node  {} (L);
\end{tikzpicture}
\qquad\qquad
\centering
\begin{tikzpicture}[node distance = .6cm, auto]
\node (Q) {$\mathbb{Q}=F$};
\node (K) [above of=Q, above of=Q, left of=Q, left of=Q, left of=Q, left of=Q] {$\ast$};
\node (F1) [above of=Q, above of=Q] {$F'$};
\node (F2) [above of=Q, above of=Q, right of=Q, right of=Q, right of=Q, right of=Q] {$K$};
\node (F) [above of=K, above of=K] {$F'K$};
\node (E1) [above of=F1, above of=F1, left of=F1] {$E'$};
\node (E2) [above of=F1, above of=F1, right of=F1] {$\mathbb{Q}(\varepsilon / \overline\varepsilon)$};
\node (E3) [above of=F2, above of=F2, left of=F2] {$L$};
\node (E4) [above of=F2, above of=F2, right of=F2] {$\overline{L}$};
\node (L) [above of=Q, node distance = 3.6cm] {$M$};
\draw[-] (Q) to node {} (K);
\draw[-] (Q) to node [swap] {} (F1);
\draw[-] (Q) to node [swap] {} (F2);
\draw[-] (K) to node  {} (F);
\draw[-] (F1) to node  {} (F);
\draw[loosely dashed] (F2) to node  {} (F);
\draw[-] (F1) to node  {} (E1);
\draw[-] (F1) to node  {} (E2);
\draw[-] (F2) to node  {} (E3);
\draw[-] (F2) to node  {} (E4);
\draw[-] (F) to node  {} (L);
\draw[-] (E1) to node  {} (L);
\draw[-] (E2) to node  {} (L);
\draw[-] (E3) to node  {} (L);
\draw[-] (E4) to node  {} (L);
\end{tikzpicture}
\caption{Real quadratic case $|\delta|\in \mathbb{Q}$ to the left, and ATR case $|\delta|\notin \mathbb{Q}$ to right.}
\end{figure}

Note that as the quartic fields $E'$, $\mathbb{Q}(\varepsilon / \overline{\varepsilon})$, $L$, and $\overline{L}$ are Galois conjugates inside $M$, they determine each other up to reordering. In particular, $K=L\cap \overline{L}$ can be determined from $F'$ and $E'$, which gives an answer to \cite[Remark 4.1, 2.]{MR4863919}.
\end{example}

\begin{remark}
    If $\Gamma'$ contains torsion, then it may have multiple primitive generators which are not conjugate.
\end{remark}

\section{Special forms} \label{sec:2}

\subsection{Weighted cycles}

Following Kudla \cite{MR1427845} we define weighted cycles.

For $X\in V$, let $u(X):\mathcal{D}\to \mathbb{R}$ denote the function $z\mapsto (X,z)$. For a tuple of vectors $X=(X_1,\ldots,X_m)\in V^m$ we define $u(X):\mathcal{D}\to \mathbb{R}^m$ and $r(X):\mathcal{D}\to \mathbb{R}_{\geq0}$ by
\begin{align*}
    u(X) &=(u(X_1),\ldots,u(X_m)) ,
    \\
    r(X) &=\left(u(X_1)^2 +\cdots + u(X_m)^2\right)^{1/2}.
\end{align*}
We also let 
\[
    Q(X) := \frac{1}{2} \begin{pmatrix} (X_1,X_1) & \cdots & (X_1,X_m) 
    \\
\vdots & & \vdots
\\
(X_m,X_1) & \cdots & (X_m,X_m) \end{pmatrix} 
\in\Sym_m(\mathbb{R}).
\] 
We let $\mathcal{D}_X\subset \mathcal{D}$ and $c_X=\mathcal{D}_X\cap \mathcal{H}$ denote the zero locus of $u(X)$, or equivalently the zero locus of $r(X)$. We have $\mathcal{D}_X=\mathcal{D}_{\Span X}$ and $c_X = c_{\Span X}$ when $\Span X$ is positive semidefinite, and otherwise $\mathcal{D}_X$ and $c_X$ are empty. We let $\Gamma_X\subset \Gamma$ denote the stabilizer of $X$.

Now suppose that $X\in \mathbf{V}(F)^m$, and that $Q(X)$ is positive definite. Let
\[
    c(X)_\Gamma : \Gamma_X \backslash c_X \to M_\Gamma
\]
denote the associated special geodesic cycle. Let $T\in\Sym_m(F)$ be positive definite, and let $\varphi_f\in \mathscr{S}(\mathbf{V}(\mathbb{A}_{f})^m)^\Gamma$. We define the associated weighted geodesic cycle to be the formal linear combination
\[
    c(T,\varphi_f)_\Gamma := \sum_{\substack{X\in \mathbf{V}(F)^m / \Gamma \\ Q(X) = T}} \varphi_f(X) c(X)_\Gamma.
\]
When $Q(X)$ and $T$ are not positive definite we simply let $c(X)_\Gamma$ and $c(T,\varphi_f)_\Gamma$ be the empty cycle. We very often abuse notation and just write $c(X)$ and $c(T,\varphi_f)$, when $\Gamma$ is clear from context. For $a\in \GL_m^+(\mathbb{R})$ and $Y=Xa$ we also write $c(Y)=c(X)$, so that $c(Y)$ is defined whenever $\Span Y \subset V$ has a rational basis.

\subsection{Special forms}

We define forms $\varphi^0(X)\in \Omega^m(\mathcal{D})$ and $\psi^0(X)\in \Omega^{m-1}(\mathcal{D})$, for $X=(X_1,\ldots,X_m)\in V^m$, by
\begin{align*}
    \varphi^0(X) &=  du(X_1) \wedge \cdots \wedge du(X_m) e^{ - \pi r(X)^2}, \\
    \psi^0(X) &= \frac{1}{2}\sum_{i = 1}^m ( - 1)^{i - 1} du(X_1)\wedge\cdots \wedge \widehat{d}u(X_i)\cdots\wedge du(X_m) e^{ - \pi r(X)^2}.
\end{align*}
The following lemma is proved in exactly the same way as \cite{christensen2024linking}

\begin{lemma}\label{lem:bpl} Let $X\in V^m$. Then
    \begin{enumerate}[label = (\alph*),font=\upshape]
        \item for every $g\in \OO(V)$, we have $g^\ast \varphi^0(X)= \varphi^0(g^{-1}X)$ and $g^\ast\psi^0(X) = \psi^0(g^{-1}X)$;
        \item for every $h\in \SO(m)$, we have $\varphi^0(X h) = \varphi^0(X)$ and $\psi^0(Xh) = \psi^0(X)$;
        \item if $X_1$,\ldots, and $X_m$ are linearly dependent then $\varphi^0(X)=\psi^0(X)=0$;
        \item $\varphi^0(X)$ is closed;
        \item for $t>0$
        \begin{equation}\label{fte}
            \frac{d }{dt} \varphi^0(Xt^{1 /2}) = \frac{1}{t}d\psi^0(Xt^{1 /2}).
        \end{equation}
    \end{enumerate}
\end{lemma}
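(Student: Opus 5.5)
The plan is to treat all five statements through one device: express $\varphi^0(X)$ and $\psi^0(X)$ as pullbacks of fixed forms on $\mathbb{R}^m$ under the map $u(X):\mathcal{D}\to\mathbb{R}^m$. Let $y=(y_1,\ldots,y_m)$ be coordinates on $\mathbb{R}^m$, $E=\sum_i y_i\partial_{y_i}$ the Euler vector field, and
\[
\Phi = e^{-\pi|y|^2}dy_1\wedge\cdots\wedge dy_m, \qquad \Psi = \tfrac12\iota_E\Phi = \tfrac12\sum_{i=1}^m(-1)^{i-1}y_i\,dy_1\wedge\cdots\wedge\widehat{dy_i}\wedge\cdots\wedge dy_m\,e^{-\pi|y|^2}.
\]
Then $\varphi^0(X)=u(X)^\ast\Phi$. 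For $\psi^0(X)=u(X)^\ast\Psi$ to hold, the displayed definition of $\psi^0$ must carry the factor $u(X_i)$ in its $i$th summand, i.e.\ the summand $(-1)^{i-1}u(X_i)\,du(X_1)\wedge\cdots$; this is the normalization that makes (b) and (e) true, and I read the definition this way. The same normalization is used in \cite{christensen2024linking}.

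For (a), note that $(g^\ast u(X_i))(z)=(X_i,gz)=(g^{-1}X_i,z)$. Hence $u(X)\circ g=u(g^{-1}X)$, and pulling back $\Phi$ and $\Psi$ gives the claim. For (b), write $u(Xh)=h^{t}\circ u(X)$, where $h^t$ acts linearly on $\mathbb{R}^m$. Since $h\in\SO(m)$ preserves $dy_1\wedge\cdots\wedge dy_m$, $|y|$ and $E$, we have $(h^t)^\ast\Phi=\Phi$ and $(h^t)^\ast\Psi=\Psi$.

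For (c), suppose the $X_i$ are linearly dependent. Then $u(X)$ takes values in a proper linear subspace $W\subset\mathbb{R}^m$, so both pullbacks factor through the restrictions $\Phi|_W$ and $\Psi|_W$. The restriction $\Phi|_W$ is an $m$-form on a space of dimension less than $m$, hence zero. For $\Psi|_W$, the vector field $E$ is tangent to $W$ along $W$. So evaluating $\iota_E\Phi$ on $w_1,\ldots,w_{m-1}\in W$ means evaluating $\Phi$ on $m$ vectors lying in $W$, and this vanishes. For (d), $\Phi$ is a top-degree form on $\mathbb{R}^m$ and is therefore closed, so $d\varphi^0(X)=u(X)^\ast d\Phi=0$.

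For (e), let $\rho_s(y)=sy$ denote scaling. Then $u(Xt^{1/2})=\rho_{t^{1/2}}\circ u(X)$, so $\varphi^0(Xt^{1/2})=u(X)^\ast\rho_{t^{1/2}}^\ast\Phi$. Differentiating in $t$ gives $\frac{d}{dt}\rho_{t^{1/2}}^\ast\Phi=\frac{1}{2t}\rho_{t^{1/2}}^\ast\mathcal{L}_E\Phi$. By Cartan's formula and $d\Phi=0$, $\mathcal{L}_E\Phi=d\iota_E\Phi=2\,d\Psi$. Therefore
\[
\frac{d}{dt}\varphi^0(Xt^{1/2}) = \frac1t\,u(X)^\ast\rho_{t^{1/2}}^\ast d\Psi = \frac1t\,d\psi^0(Xt^{1/2}).
\]
As a check, a direct computation gives $d\big(\sum_i(-1)^{i-1}u_i\,\omega_i\,e^{-\pi t r^2}\big)=(m-2\pi t r^2)\,du_1\wedge\cdots\wedge du_m\,e^{-\pi t r^2}$, where $u_i=u(X_i)$ and $\omega_i$ is the wedge with $du_i$ omitted. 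Both sides then equal $t^{m/2}\big(\tfrac{m}{2t}-\pi r^2\big)du_1\wedge\cdots\wedge du_m\,e^{-\pi t r^2}$.

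The only real obstacle is (c) for $\psi^0$, in the case where $\Span\{X_i\}$ has dimension exactly $m-1$. This is handled by the observation that $E$ is tangent to $W$. The rest is bookkeeping of signs and of the factor $\tfrac12$ in (e).
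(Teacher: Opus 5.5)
Your argument is correct and follows essentially the paper's route: the paper defers the proof to \cite{christensen2024linking}, but right after the lemma it adopts your viewpoint that $\varphi^0(X)$ and $\psi^0(X)$ are pullbacks under $u(X)$ of universal forms on $\mathbb{R}^m$, so that (e) reduces to the transgression identity on $\mathbb{R}^m$, which both your Cartan-formula argument and your direct check establish. Your reading of the definition of $\psi^0$ (factor $u(X_i)$ in the $i$th summand, overall $\tfrac12$) is the right one: the $u(X_i)$ factor is needed for (b), (c) and (e), and the $\tfrac12$ is needed for (e) and matches \eqref{eq:utf}; by contrast, the paper's \eqref{eq:ute}, its identification $\psi^0=e^{-\pi r^2}\rho$, and its $m=1$ formula for $\psi^0$ drop the $\tfrac12$ and are off by a factor of $2$.
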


We shall also need the compatibility of these forms with restriction to totally geodesic submanifolds. To set this up, let $U\subset V$ be a positive definite subspace and denote $V'=U^\perp$ and $\mathcal{D}'=\mathcal{D}_U$. For $X'\in (V')^m$ let $\varphi'^{0}(X')\in \Omega^m(\mathcal{D}')$, and $\psi'^{0}(X')\in \Omega^{m-1}(\mathcal{D}')$ denote the forms defined by the same formulas as above.

\begin{lemma}\label{lem:fos}
    Let $X\in V^m$ and let ${X}\in {V}^m$ be the orthogonal projection of $X$. Then 
    \[
        \varphi^0(X)|_{\mathcal{D}'} = \varphi'^0(X')\hspace{1cm}\text{and}\hspace{1cm} 
        \psi^0(X)|_{{\mathcal{D}'}} = \psi'^0(X')
    \]
\end{lemma}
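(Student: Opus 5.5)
The statement has a typo: $X'\in (V')^m$ should be the orthogonal projection of $X$ onto $V'=U^\perp$. So write $X_i = X_i' + X_i''$ with $X_i'\in V'$ and $X_i''\in U$. The forms $\varphi^0$ and $\psi^0$ are built entirely from the functions $u(X_i)$ and their differentials, so the whole statement reduces to understanding $u(X_i)$ on $\mathcal{D}'$. The plan is to show that the restriction of each $u(X_i)$ to $\mathcal{D}'$ is the corresponding function $u'(X_i')$, and then use that pullback commutes with $d$, $\wedge$ and composition with smooth functions.

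\textbf{Step 1: restrict the coordinate functions.} For $z\in\mathcal{D}'=\mathcal{D}\cap U^\perp$ we have $(X_i'',z)=0$, hence
\[
u(X_i)(z)=(X_i,z)=(X_i',z)=u'(X_i')(z).
\]
Thus $\iota^\ast u(X_i)=u'(X_i')$, where $\iota:\mathcal{D}'\hookrightarrow\mathcal{D}$ is the inclusion. Consequently $\iota^\ast r(X)=r'(X')$, and so $\iota^\ast e^{-\pi r(X)^2}=e^{-\pi r'(X')^2}$.

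\textbf{Step 2: pull back the forms.} Since $\iota^\ast$ commutes with $d$, we get $\iota^\ast du(X_i)=d\iota^\ast u(X_i)=du'(X_i')$. Since $\iota^\ast$ is also multiplicative for $\wedge$, applying it term by term to the defining formulas gives
\[
\iota^\ast\varphi^0(X)=du'(X_1')\wedge\cdots\wedge du'(X_m')\,e^{-\pi r'(X')^2}=\varphi'^0(X'),
\]
and the same computation, applied to each summand of $\psi^0(X)$ (keeping the signs $(-1)^{i-1}$ and the factor $\tfrac12$), gives $\iota^\ast\psi^0(X)=\psi'^0(X')$.

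\textbf{Where care is needed.} The only point to check is that the primed forms are genuinely defined by the same recipe on $\mathcal{D}'$, i.e.\ that $\mathcal{D}'$ is the hyperboloid $\{z\in V':Q(z)=-1\}$ in $V'$ with $u'(Y)(z)=(Y,z)$ for $Y\in V'$. This holds by the definition $\mathcal{D}_U=\mathcal{D}\cap U^\perp$, and the bilinear form on $V'$ is the restriction of that on $V$, so no further obstacle arises. Orientation conventions play no role, because the identity is one of pulled-back forms, not of integrals.
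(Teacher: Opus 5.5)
Your proposal is correct and follows the same route as the paper, which also observes that $(X_i,z)=(X_i',z)$ for $z\in\mathcal{D}'\subset U^\perp$, so that $u(X_i)|_{\mathcal{D}'}=u'(X_i')$, and concludes because $\varphi^0$ and $\psi^0$ are built from these functions via $d$, $\wedge$ and $e^{-\pi r^2}$. You also read the typo correctly: $X'\in (V')^m$ is meant to be the orthogonal projection of $X$ onto $V'=U^\perp$.
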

\begin{proof}
    For $ X'\in  V'$, let $u'(X'):{\mathcal{D}'}\to \mathbb{R}$ denote the function $z\mapsto ( X',z)$. Now if $X'$ is the projection of $X\in V$ and $z\in {\mathcal{D}'}\in  V$, then $(X,z) = (X',z)$, and hence $u(X)|_{X'} = u'(X')$. This implies the desired relations.
\end{proof}

We also introduce a form with singularities by
\[
    \widetilde\psi^0(X) : = -\int_1^\infty \psi^0(Xt^{\frac{1}{2}}) \frac{dt}{t}.
\]
This defines a smooth form away from $\mathcal{D}_X$. By Lemma \ref{lem:bpl}(c), $\widetilde\psi^0(X)=0$ when $X_1,\ldots,X_m$ are linearly dependent. 

Evidently, the three forms above are the pullback by $u(X)$ of universal forms $\varphi^0$, $\psi^0$, and $\widetilde\psi^0$ on $\mathbb{R}^m$. This will be useful in the next subsection. More explicitly, let $u_1,\ldots,u_m$ denote the coordinates on $\mathbb{R}^m$, and let $r$ denote the norm. Then $\varphi^0=e^{-\pi r^2} \nu$ and $\psi^0=e^{-\pi r^2}\rho$, where $\nu=du_1\wedge\cdots\wedge du_m$ is the volume form on $\mathbb{R}^m$ and $\rho$ is the contraction of $\nu$ by the radial vector field $r \frac{\partial}{\partial r} = \sum_{i=1}^m u_i \frac{\partial}{\partial u_i}$. We also have $\rho=r^m \omega$, where $\omega$ is the standard volume form on $S^{m-1}$, and so away from the origin
\begin{equation}\label{eq:utf}
    \widetilde\psi^0 = -\frac{1}{2}\int_1^\infty t^{\frac{m}{2}} e^{ - \pi r^2 t}\frac{dt}{t} \rho = - \frac{1}{2}\pi^{ - \frac{m}{2}} \Gamma\left(\frac{m}{2},\pi r^2\right) \omega,
\end{equation}
where $\Gamma(s,x)=\int_x^\infty t^s e^{-t}\frac{dt}{t}$ is the incomplete Gamma function. The transgression equation reads 
\begin{equation}\label{eq:ute}
    \frac{\partial}{\partial t} (t^{m/2} e^{-\pi r^2 t} \nu)
    =
    d(t^{\frac{m}{2}-1} e^{-\pi r^2 t} \rho).
\end{equation}

\begin{example}
    For $m=1$, we have $\nu=du$, $\rho = u$, and $\omega=\sgn(u)$. Hence
    $\varphi^0= e^{-\pi u^2}du$, $\psi^0 = ue^{-\pi u^2}$, and $\widetilde\psi^0 = -\frac{\sgn(u)}{2}\erfc(\sqrt{\pi}|u|)$, where $\erfc$ is the complementary error function. This matches with known formulas \cite{MR3796415}.
\end{example}

\subsection{Current relation}

The main property of $\widetilde\psi^0(X)$ is the following current relation.

\begin{proposition}\label{prp:tce}
    Away from $\mathcal{D}_X$ we have $d\widetilde{\psi}^0(X)=\varphi^0(X)$. Further, $\widetilde{\psi}^0(X)$ is locally integrable and satisfies the current equation 
    \[
        ( - 1)^{n - m + 1}\int_\mathcal{D} d\eta\wedge \widetilde\psi^0(X) = \int_\mathcal{D} \eta\wedge \varphi^0(X) - \int_{\mathcal{D}_X} \eta
    \]
    for any compactly supported smooth form $\eta\in \Omega_c^{n-m}(\mathcal{D})$.
\end{proposition}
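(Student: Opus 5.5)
Everything reduces to the universal forms on $\mathbb{R}^m$, pulled back by the map $u(X):\mathcal{D}\to\mathbb{R}^m$.

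\textbf{Reductions.} By Lemma \ref{lem:bpl}(c), if $X_1,\ldots,X_m$ are linearly dependent then $\varphi^0(X)=\widetilde\psi^0(X)=0$. In that case $\mathcal{D}_X$ is either empty or of the wrong dimension and is treated as the zero cycle, so there is nothing to prove. We therefore assume $X$ is linearly independent. If $\Span X$ is not positive definite then $\mathcal{D}_X=\emptyset$. In either case $u(X)$ is a submersion at every point of $\mathcal{D}_X$, and $\mathcal{D}_X=u(X)^{-1}(0)$, because $du(X_i)$ at $z$ is the functional $(X_i,\cdot)$ on $T_z\mathcal{D}$ and these are independent when $\Span X\perp z$ is positive definite.

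\textbf{Away from $\mathcal{D}_X$.} Here we integrate the transgression equation \eqref{eq:ute} in $t$ from $1$ to $\infty$. On the region $r\ge\epsilon$ the integrand $t^{m/2-1}e^{-\pi r^2t}\rho$ and its derivatives decay exponentially in $t$, uniformly on compacts. So we may differentiate under the integral, and
\[
d\widetilde\psi^0=-\int_1^\infty \partial_t\bigl(t^{m/2}e^{-\pi r^2t}\nu\bigr)\,dt=e^{-\pi r^2}\nu=\varphi^0 .
\]
The boundary term at $\infty$ vanishes for $r>0$. Pulling back by $u(X)$ and using Lemma \ref{lem:bpl}(e) gives $d\widetilde\psi^0(X)=\varphi^0(X)$ off $\mathcal{D}_X$.

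\textbf{Local integrability.} By \eqref{eq:utf}, $\widetilde\psi^0=-\tfrac12\pi^{-m/2}\Gamma(\tfrac m2,\pi r^2)\,\omega$. Here $\omega=r^{-m}\rho$ has coefficients $O(r^{1-m})$ and $\Gamma(\tfrac m2,\pi r^2)$ is bounded near $r=0$ (when $m\ge1$). Near a point of $\mathcal{D}_X$ we use the submersion to choose local coordinates $(u,y)$ with $u=u(X)$. Then $\widetilde\psi^0(X)$ has coefficients $O(|u|^{1-m})$, which is integrable in $\mathbb{R}^m$. Away from $\mathcal{D}_X$, in the case where $\Span X$ is indefinite, the form is smooth, but we must check that $u(X)$ does not meet $0$, and it does not.

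\textbf{Current equation.} Fix $\eta\in\Omega_c^{n-m}(\mathcal{D})$ and let $N_\epsilon=\{r(X)<\epsilon\}$. By local integrability and dominated convergence,
\[
\int_\mathcal{D} d\eta\wedge\widetilde\psi^0(X)=\lim_{\epsilon\to0}\int_{\mathcal{D}\setminus N_\epsilon}d\eta\wedge\widetilde\psi^0(X).
\]
Next we write $d(\eta\wedge\widetilde\psi^0)=d\eta\wedge\widetilde\psi^0+(-1)^{n-m}\eta\wedge\varphi^0$ and apply Stokes. This gives
\[
\int_{\mathcal{D}\setminus N_\epsilon}d\eta\wedge\widetilde\psi^0=(-1)^{n-m+1}\int_{\mathcal{D}\setminus N_\epsilon}\eta\wedge\varphi^0-\int_{\partial N_\epsilon}\eta\wedge\widetilde\psi^0 ,
\]
with the sign of the last term fixed by orienting $\partial N_\epsilon$ as the boundary of $N_\epsilon$.

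\textbf{Main obstacle: the boundary term.} Locally $\partial N_\epsilon\cong \mathcal{D}_X\times S^{m-1}_\epsilon$, and on it $\widetilde\psi^0=-\tfrac12\pi^{-m/2}\Gamma(\tfrac m2,\pi\epsilon^2)\,\omega$. As $\epsilon\to0$, the form $\omega$ restricted to the sphere fiber integrates to $\operatorname{vol}(S^{m-1})=2\pi^{m/2}/\Gamma(\tfrac m2)$, so the constant becomes exactly $-1$. The fiber integral of $\eta\wedge\omega$ over $S_\epsilon^{m-1}$ then tends to $\pm\eta|_{\mathcal{D}_X}\cdot\operatorname{vol}(S^{m-1})$, since terms of $\eta$ involving $du_i$ contribute $O(\epsilon)$. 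Hence the boundary term tends to $\pm\int_{\mathcal{D}_X}\eta$.

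The delicate part is the sign bookkeeping. We must combine the orientation convention \eqref{eq:tsd} (tangent of $\mathcal{D}_X$ followed by $U$, where $U$ is identified with $\mathbb{R}^m$ through $du(X)$), the outward-normal-first boundary convention for $\partial N_\epsilon$, and the reordering of $\eta$ (degree $n-m$) past $\omega$ (degree $m-1$). Together these should produce the overall factor $(-1)^{n-m+1}$ in front of $\int_{\mathcal{D}}d\eta\wedge\widetilde\psi^0$ and the term $-\int_{\mathcal{D}_X}\eta$. To make this rigorous I would reduce, via a tubular neighbourhood and a partition of unity, to the product model $\mathbb{R}^{p}\times\mathbb{R}^m$. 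In that model the sign check is a direct computation using $d\omega=0$ and $\int_{S^{m-1}}\omega$.
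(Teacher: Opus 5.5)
Your argument is correct, but it follows a different route from the paper's.

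\textbf{What you do.} You excise the tube $N_\epsilon=\{r(X)<\epsilon\}$ and apply Stokes on its complement. You then recover $\int_{\mathcal{D}_X}\eta$ as a residue over the sphere bundle $\partial N_\epsilon$, using the closed formula \eqref{eq:utf} and $\Gamma(\tfrac m2)\operatorname{vol}(S^{m-1})=2\pi^{m/2}$.

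\textbf{What the paper does.} It uses Lemma \ref{lem:tfb} and the projection formula to push $\eta$ forward to $f=u(X)_\ast\eta\in C_c^\infty(\mathbb{R}^m)$. It then regularizes in the parameter $t$ rather than in space. The truncated transgression form $\int_1^t (t')^{m/2}e^{-\pi r^2t'}\rho\,\frac{dt'}{t'}$ is smooth on all of $\mathbb{R}^m$, so Stokes applies with no boundary term. The delta term appears as the limit $t^{m/2}e^{-\pi r^2t}\nu\to\delta_0$, and the tail $\int_t^\infty$ vanishes by dominated convergence.

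\textbf{What each buys.} The paper's version avoids boundary terms entirely. It uses nothing about $\widetilde\psi^0$ beyond the transgression equation and the Gaussian approximate identity, which is the same mechanism as the $t\to\infty$ limits in Section \ref{sec:3}. It does, however, rely on the global fibration of Lemma \ref{lem:tfb}. Yours is purely local: it needs only that $u(X)$ is a submersion near $\mathcal{D}_X$, which your tangent-space computation gives. The cost is that you use the explicit shape of $\widetilde\psi^0$ near $r=0$ and must carry out the orientation check.

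\textbf{The deferred sign check does close.} In the product model, $\mathcal{D}$ is oriented by $dy\wedge du$; this follows from \eqref{eq:tsd}, since $w\mapsto((X_i,w))_i$ has matrix $2Q(X)$ in the basis $X$. The boundary orientation of $\partial N_\epsilon$ is then $(-1)^{n-m}dy\wedge\omega$. Hence $\int_{\partial N_\epsilon}\eta\wedge\widetilde\psi^0\to(-1)^{n-m+1}\int_{\mathcal{D}_X}\eta$, which yields exactly the stated current equation.

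One small sharpening: in a chart where $u=u(X)$, the components of $\eta$ containing some $du_i$ restrict to zero against $\omega$ on $\partial N_\epsilon$. So they contribute exactly $0$, not merely $O(\epsilon)$.
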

\begin{proof}
This essentially follows from the corresponding analogous result for the universal forms $\varphi^0$, $\psi^0$, and $\widetilde \psi^0$. Away from the origin of $\mathbb{R}^m$ we can swap the exterior deivative with the integral in the definition of $\widetilde \psi^0$ and hence \eqref{eq:ute} implies $d\widetilde\psi^0 = \varphi^0$. Pulling back by $u(X)$ this implies that $d\widetilde\psi^0(X) = \varphi^0(X)$ away from $\mathcal{D}_X$.

Now whenever $\mathcal{D}_X$ is empty the transgression equation just follows by Stokes theorem. When $X_1$,\ldots, $X_m$ are linearly dependent there is also nothing to prove by Lemma \ref{lem:bpl}(c). The remaining case is when $Q(X)$ is positive definite. Here we emply the following lemma, which we will prove after finishing the current proof. 

\begin{lemma}\label{lem:tfb}
    If $Q(X)$ is positive definite then $\mathcal{D}_X\to \mathcal{D}\xrightarrow{u(X)} \mathbb{R}^m$ is a trivial fiber bundle.
\end{lemma}

We only need the fact that $u(X)$ is a fiber bundle, as this allows us to use the projection formula for integration along the fibers. The local integrability of $\widetilde\psi^0(X)$ now follows from \eqref{eq:utf} and the integrability of $\omega$ on $S^{m-1}$. Letting $f=u(X)_\ast \eta\in C_c^\infty(\mathbb{R}^m)$ we then have to show that
\[
    -\int_{\mathbb{R}^m} df \wedge \widetilde\psi^0 = \int_{\mathbb{R}^m} f\varphi^0 - f(0).
\]
Again we use the transgression equation, in the form 
\[
    \varphi^0 - t^{\frac{m}{2}}e^{ - \pi r^2t} \nu = -d\left(\int_1^t (t')^{\frac{m}{2}}e^{ - \pi r^2 t'} \frac{dt'}{t'} \rho\right).
\]
Integrating against $f$ and applying Stokes theorem we get 
\[
    \int_{\mathbb{R}^m} f\varphi^0 - \int_{\mathbb{R}^m}ft^{\frac{m}{2}} e^{ - \pi r^2t} \nu = \int_{\mathbb{R}^m}df\wedge \int_1^t (t')^{\frac{m}{2}}e^{ - \pi r^2 t'} \frac{dt'}{t'} \rho.
\]
Finally we take $t\to\infty$. The second term converges to $f(0)$ from the classic fact that $t^{m/2}e^{-\pi r^2t}$ converges towards the delta distribution at $0$. For the right-hand side we have
\[
    \int_{\mathbb{R}^m}df\wedge \int_t^\infty (t')^{\frac{m}{2}}e^{ - \pi r^2 t'} \frac{dt'}{t'} \rho = \int_{\mathbb{R}^m}df\wedge\int_{r^2t}^\infty (t')^{\frac{m}{2}}e^{ - \pi t'} \frac{dt'}{t'} \omega
\]
which converges to $0$ by dominated convergence.
\end{proof}

\begin{proof}[Proof of Lemma \ref{lem:tfb}]
Let $U=\Span X$ and let $x\mapsto x_U$ denote the orthogonal projection to $U$. The map 
\[
    \mathcal{D}\to U\times \mathcal{D}_U,\hspace{1cm} z \mapsto \left(z_U,\frac{z - z_U}{\sqrt{ -Q(z - z_U)}}\right)
\]
is a diffeomorphism since it has a smooth inverse $(x,y)\mapsto x+y\sqrt{Q(x)+1}$. The map $(X,-):U\to \mathbb{R}^m$ is also a diffeomorphism, hence $\mathcal{D}\cong \mathbb{R}^m\times \mathcal{D}_U$. Clearly this identifies $\mathcal{D}_U$ with $0\times \mathcal{D}_U$, and the associated map $\mathcal{D}\to \mathbb{R}^m$ is just $u(X)$ since $(X,z_U)=(X,z)$.

\end{proof}

\subsection{Averaging} 

Let $Y\in V^m$ be a vector such that $\Span Y\subset V$ has a $F$-rational basis, and let $\phi^0$ be either $\varphi^0$, $\psi^0$ or $\tilde\psi^0$. We let 
\begin{equation}\label{eq:aXf}
    \alpha(Y,\phi^0)_\Gamma : = \sum_{\gamma\in \Gamma_Y\backslash \Gamma} \phi^0(Y).
\end{equation}
We will often abuse notation and write simply $\alpha(Y,\phi^0)$. For $T=\Sym_m(F)$, $v\in\Sym_m(\mathbb{R})_{+}$, and $\varphi_f\in \mathscr{S}(\mathbf{V}(\mathbb{A}_{f})^m)$ we also define the weighted average
\begin{equation}\label{eq:TTf}
    \Theta(T,\varphi_f,\phi^0,v) := \sum_{\substack{X\in \mathbf{V}(F)^m \\ Q(X) = T}} \varphi_f(X) \phi^0(Xa)
\end{equation}
where $a\in\GL_m^+(\mathbb{R})$ is any matrix with $v=\tp{a} a$ (this does not depend on the choice of $a$ by Lemma \ref{lem:bpl}(b)).

For $\phi^0=\varphi^0$ and $\phi^0 = \psi^0$ the series \eqref{eq:aXf} and \eqref{eq:TTf} converge normally in $\Omega^\ast(\mathcal{D})$, and hence they define smooth $\Gamma$-invariant forms on $\mathcal{D}$ and consequently also smooth forms on $M$. This is a consequence of the following standard lemma. We introduce the following notation
\[
    \phi_\infty(X_\infty) := \exp(-2\pi \sum_{i = 1}^r\tr Q(X_\infty^{(i)}))\phi^0(X_\infty^{(1)}) \hspace{1cm}\text{for}\; X_\infty\in \mathbf{V}(\mathbb{A}_\infty)^m.
\]
Here $X_\infty= (X_\infty^{(1)},\ldots, X_\infty^{(r)})$.
\begin{lemma}\label{lem:cot}
    Let $\phi^0$ be either $\varphi^0$ or $\psi^0$. For any $a_\infty\in \GL_m^+(\mathbb{A}_\infty)$ the series
    \[
        \sum_{X\in \mathbf{V}(F)^m} \varphi_f(X) \phi_\infty(Xa_\infty)
    \]
    converges normally in $\Omega^\ast(\mathcal{D})$. In particular \eqref{eq:aXf} and \eqref{eq:TTf} converge normally in $\Omega^\ast(\mathcal{D})$ for any $Y$ and $T$.
\end{lemma}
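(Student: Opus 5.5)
Fix a compact set $C\subset\mathcal{D}$ and a finite collection of invariant vector fields. The plan is to bound, uniformly for $z\in C$, the sum over $X\in\mathbf{V}(F)^m$ of the sup-norms of the coefficients of $\varphi_f(X)\phi_\infty(Xa_\infty)$ and of their derivatives, by a convergent lattice sum. Write $\phi^0(X)$ as a polynomial in the functions $u(X_i)$ and the one-forms $du(X_i)$, times $e^{-\pi r(X)^2}$. Its derivatives of any fixed order along invariant vector fields are again of the form $P(X,z)e^{-\pi r(X)^2}$, where $P$ is polynomial in $X$. On $C$ this $P$ is bounded by $c(1+|X|)^N$, with $|\cdot|$ any fixed Euclidean norm on $V^m$.

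The key step is the majorant. For $z\in\mathcal{D}$, write $Q_z$ for the positive definite majorant of $Q$ on $V$, namely $Q_z(x)=Q(x)+2(x,z)^2$, using $Q(z)=-1$. Then
\[
\sum_{i=1}^r \tr Q(X^{(i)}) + \tfrac12 r(X)^2
\]
is, up to positive constants, the sum of $\tr Q_z(X^{(1)})$ and the definite forms $\tr Q(X^{(i)})$ for $i\ge2$. Replacing $X$ by $Xa_\infty$ only changes this by a positive definite linear change of variables. So on $C$ we get
\[
\exp\Bigl(-2\pi\sum_i\tr Q\bigl((Xa_\infty)^{(i)}\bigr)\Bigr)\,e^{-\pi r(Xa)^2}\le e^{-\epsilon\|X\|^2},
\]
where $\|\cdot\|$ is a Euclidean norm on $\mathbf{V}(\mathbb{A}_\infty)^m$ and $\epsilon>0$ depends on $C$ and $a_\infty$. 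The constant $\epsilon$ is uniform because $z\mapsto Q_z$ is continuous and $C$ is compact.

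Since $\varphi_f$ is bounded and supported on a lattice coset, $X$ runs over a lattice in $\mathbf{V}(\mathbb{A}_\infty)^m\cong\mathbb{R}^{m(n+1)[F:\mathbb{Q}]}$. Hence the sum $\sum(1+\|X\|)^N e^{-\epsilon\|X\|^2}$ converges, which gives normal convergence.

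For the consequences, first take \eqref{eq:TTf}. The condition $Q(X)=T$ forces $\tr Q(X^{(i)})=\tr\sigma_i(T)$ for every $i$, so the archimedean factors in $\phi_\infty$ are constants. Therefore $\Theta(T,\varphi_f,\phi^0,v)$ is a sub-sum of the series above with $a_\infty=(a,1,\ldots,1)$, rescaled by a constant.

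Next take \eqref{eq:aXf}. The map $\gamma\mapsto\gamma^{-1}Y$ identifies $\Gamma_Y\backslash\Gamma$ with the orbit $\Gamma Y$. Write $Y=X_0b$ with $X_0$ rational and $b\in\GL_m(\mathbb{R})$. Then the orbit consists of $\gamma X_0 b$, where the $\gamma X_0$ lie in a lattice $L^m$ with $\Gamma L=L$ and satisfy $Q(\gamma X_0)=Q(X_0)$. Taking $\varphi_f$ to be the characteristic function of $\hat L^m$ reduces this to the previous case, with $a_\infty=(b,1,\ldots)$. The sign of $\det b$ is harmless, since one may compose with a reflection or argue with the bound directly.

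The main obstacle is the uniform majorant estimate on compact sets. This is where the signature condition and the Gaussian factor $e^{-\pi r^2}$ are needed: the factor $e^{-\pi r^2}$ is what turns the indefinite form $Q$ at $\sigma_1$ into a definite one.
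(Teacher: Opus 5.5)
Your proposal is correct and follows essentially the same route as the paper: rewrite the Gaussian factor via the Siegel majorant, bound it below by a positive definite form uniformly on compacts, compare with a positive definite theta series over a lattice, and then note that for fixed $T$ (or a $\Gamma$-orbit inside a $\Gamma$-stable lattice) the archimedean exponential factor is constant. One small slip: with $Q(x)=\frac12(x,x)$ and $Q(z)=-1$ the majorant is $Q_z(x)=Q(x)+\frac12(x,z)^2$ (not $Q(x)+2(x,z)^2$), and with this normalization the identity $\sum_i\tr Q(X^{(i)})+\frac12 r(X^{(1)},z)^2=\tr Q_z(X^{(1)})+\sum_{i\ge2}\tr Q(X^{(i)})$ holds exactly, which is all your argument needs.
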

\begin{proof}
This is very similar to \cite[Lemma 2.7]{christensen2024linking} and \cite{MR3783423}. We write some of it out as we will use the notation later. The Siegel majorant of $Q$ at $z\in \mathcal{D}$ is given by 
\[
    (X,Y)_z = (X,Y) + u(X,z)u(Y,z),\hspace{1cm} X,Y\in V
\]
and $Q_z(X) = \frac{1}{2}(X,X)_z$. For $X\in V^m$ we let $Q_z(X) = \frac{1}{2}((X_i,X_j)_z)_{1\leq i,j\leq m}$. Then for $X_\infty\in \mathbf{V}(\mathbb{A}_\infty)^m$ we got
\begin{equation}
    \tr Q_z(X_\infty^{(1)}) + \sum_{i = 2}^r \tr Q(X_\infty^{(i)}) = \sum_{i = 1}^r\tr Q(X_\infty^{(i)}) + \frac{1}{2}r(X_\infty^{(1)},z),
\end{equation}
and hence
\[
    \phi_\infty(X_\infty,z) = \exp(-2\pi \left(\tr Q_z(X_\infty^{(1)})+\sum_{i=2}^r\tr Q(X_\infty^{(i)})\right))p(X_\infty^{(1)},z),
\]
where $p(X,z)$ is a homogeneous polynomial of degree $m$ in $X\in V^m$ and a smooth form in $z\in \mathcal{D}$. This follows easily from the formulas for $\varphi^0$ and $\psi^0$, and that $u(X,z)$ is linear in $X$. Then for any compact region $K$ one can find a positive definite quadratic form $q_+^{(1)}:V^m\to \mathbb{R}_{\geq0}$ with $q_+^{(1)}(X)\leq \tr Q_z(Xa^{(1)})$ for $z\in K$ and $X\in V^m$, and finally to conclude one can compare with theta series for the positive definite quadratic form
\[
    q_+(X_\infty) := q_+ ^ {(1)}(X_\infty^{(1)}) +\sum_{i=2}^r\tr Q(X_\infty^{(i)}a^{(i)})
\]
on $\mathbf{V}(\mathbb{A}_\infty)^m$.

Note that if $X\in \mathbf{V}(F)^m$, $Q(X)=T$, and $a\in \GL_m^+(\mathbb{R})$, then
\[
    \phi^0(X^{(1)}a) = \exp( 2\pi \left(\tr (\tp{a}T^{(1)}a) +\sum_{i = 2}^r\tr T^{(i)}\right))\phi_\infty((X^{(1)}a,X^{(2)},\ldots,X^{(r)})) .
\]
Hence the convergence of \eqref{eq:aXf} and \eqref{eq:TTf} follows.
\end{proof}

For $\phi^0=\widetilde\psi^0$, the situation is a little more complicated due to the singularities of the various associated forms.

\begin{lemma} \label{lem:tcl} For $\phi^0=\widetilde\psi^0$ the series \eqref{eq:aXf}, resp. \eqref{eq:TTf}, converges normally on any compact subset of $\mathcal{D} - \bigcup_{\gamma\in\Gamma_Y\backslash \Gamma} c_{\gamma^{-1}Y}$, resp.  $\mathcal{D} - \bigcup_{\substack{X\in \mathbf{V}(F)^m \\ Q(X)=T}} c_{X}$. In particular, they define smooth forms on $M$ away from the support of $c(Y)_\Gamma$, resp. $c(T,\varphi_f)_\Gamma$.
\end{lemma}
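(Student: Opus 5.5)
Our approach is to dominate $\widetilde\psi^0$ by a Gaussian away from the singular cycles, and then fall back on the convergence argument of Lemma \ref{lem:cot}. By \eqref{eq:utf}, $\widetilde\psi^0(X) = -\frac12\pi^{-m/2}\Gamma(\frac m2,\pi r(X)^2)\,u(X)^\ast\omega$ wherever $r(X)>0$. We want to bound this, uniformly in $X$ and in $z$ ranging over a compact set $K$, by something that can be summed.

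The first step is an estimate for the incomplete Gamma function. For $x\geq x_0>0$ one has $\Gamma(s,x)\leq C(s,x_0)\,x^{s-1}e^{-x}$ when $s\geq1$, and $\Gamma(s,x)\leq x^{s-1}e^{-x}$ when $s<1$; we only need the weaker form $\Gamma(s,x)\leq C\,e^{-x/2}$ for $x\geq x_0$. Next comes the angular form. Since $\omega=r^{-m}\rho$ and $\rho$ is polynomial of degree $m$ in $u$, we have
\[
u(X)^\ast\omega = r(X)^{-m}\cdot\bigl(\text{form of degree } m-1 \text{ in } du(X_i), \text{ coefficients linear in } u(X_i)\bigr).
\]
So for $z\in K$ its norm (together with derivatives, if we want normal convergence in $C^k$) is at most $r(X)^{-m}$ times a polynomial in $X$. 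Derivatives of $\Gamma(\frac m2,\pi r^2)$ are of the form $r^{m-1}e^{-\pi r^2}$ times derivatives of $r$, so they are controlled in the same way.

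The main point, and the expected obstacle, is a uniform lower bound $r(Xa,z)\geq \delta>0$ for all relevant $X$ and all $z\in K$, where $K$ is a compact set avoiding all the cycles $c_X$. Once we have it, each term is bounded by $C\,P(X)\,e^{-\pi r(Xa,z)^2/2}$. For \eqref{eq:TTf}, $Q(X)=T$ is fixed, so $\tr Q_z(Xa)=\tr(\tp a Ta)+\frac12 r(Xa,z)^2$. The Gaussian $e^{-\pi r^2/2}$ is then, up to a constant depending on $T,a$, equal to $e^{-\pi\tr Q_z(Xa)}$. This is exactly the weight appearing in Lemma \ref{lem:cot}, with $2\pi$ replaced by $\pi$, and the same comparison with a positive definite majorant (plus the trivial totally-definite factors at the other places, which are constant on $\{Q(X)=T\}$) gives normal convergence. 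For \eqref{eq:aXf}, the terms $\gamma^{-1}Y$ are a subset of $\{X: Q(X)=Q(Y)\}$ (up to scaling by $a$), so the same bound applies.

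To get $\delta$ we use discreteness. The set $\{X\in\mathbf V(F)^m: Q(X)=T\}$ intersected with $\{X: r(X,z)\leq 1 \text{ for some } z\in K\}$ lies in $\{X: \tr Q_z(X)\leq C \text{ for some } z\in K\}$. This is a bounded region for the positive definite majorant, and after accounting for the definite places and the fact that $\varphi_f$ has support in a lattice coset, it contains only finitely many lattice points $X$ with $\varphi_f(X)\neq0$. For these finitely many $X$, $r(Xa,\cdot)$ is continuous and nonvanishing on $K$, because $K$ misses $c_X$ (and $\mathcal D_X\cap\mathcal H=c_X$, with $K\subset\mathcal H$, or the analogous statement on the other sheet). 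Hence it has a positive minimum on $K$. Every other $X$ satisfies $r\geq1$. Taking the minimum gives $\delta$.

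Finally, the normally convergent series consist of smooth forms on a neighbourhood of $K$, so the sum is smooth there. $\Gamma$-invariance follows from Lemma \ref{lem:bpl}(a), so the sum descends to a smooth form on $M$ away from the support of $c(Y)_\Gamma$, resp. $c(T,\varphi_f)_\Gamma$.
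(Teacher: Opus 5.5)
Your proposal is correct and follows essentially the paper's route: a positive definite majorant plus finiteness of lattice points in bounded regions gives a uniform positive lower bound for $r(Xa,z)$ on $K$. After that, the incomplete Gamma expression \eqref{eq:utf} and the identity $\tr Q_z(Xa)=\tr(\tp{a}Ta)+\frac12 r(Xa,z)^2$ on $\{Q(X)=T\}$ (with the constant factors from the definite places included) reduce normal convergence to a comparison with a positive definite theta series. Your finiteness-plus-continuity argument for $\delta$ states more precisely what the paper's estimate \eqref{eq:rmin} expresses loosely, and so does your care in taking $K\subset\mathcal{H}$, since the zero locus $\mathcal{D}_X$ also contains $-c_X$.
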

\begin{proof}
We prove the statement for \eqref{eq:TTf}, since the other follows similarly. 
For convenience denote $\mathcal{S} = \{X\in\mathbf{V}(F)^m:Q(X)=T\}$. Let $K\subset\mathcal{D}-\bigcup_{}$ be a compact subset, and let $q^{(1)}_+:V^m\to \mathbb{R}$ be a positive definite quadratic form with $q^{(1)}_+(X)\leq \sum \tr Q_{z}(Xa)$ for all $z\in K$ and $X\in V^m$. Then
\begin{equation}\label{eq:rpb}
    r(Xa,z)^2 \geq 2 \left(q_ +^{(1)} (X) -\tr Q(Xa) \right), \hspace{1cm} X\in V^m,\; z\in K.
\end{equation}
Let $q_+:\mathbf{V}(\mathbb{A}_\infty)^m\to \mathbb{R}$ be the positive definite quadratic form given by the sum of $q_+^{(1)}$ and the completions of $Q$ at $\sigma_2,\ldots,\sigma_r$. Then \eqref{eq:rpb} implies that
\[
    r(Xa,n(x))^2 \geq 2 \left(q_ +(X) - \tr(\tp{a}T^{(1)}a) -\sum_{i = 2}^r\tr T^{(i)}\right), \hspace{1cm} X \in \mathcal{S},\; z\in K.
\]
One can find a lattice $ \Lambda\subset \mathbf{V}(F)^m $ that contains all vectors $X$ with $\varphi_f(X)\neq0$, and hence also a lattice in $\mathbf{V}(\mathbb{A}_\infty)^m$ containing the subset
\[
    \{(X^{(1)}a,X^{(2)},\ldots,X^{(r)}):X\in \mathcal{S}, \varphi_f(X)\neq 0\} \subset \mathbf{V}(\mathbb{A}_\infty)^m.
\]
There are only finitely many lattice vectors with a given bounded value of $q_+$, so as an upshot we obtain a constant $R=R_{T,a}>0$ such that for all $X\in \mathcal{S}$ with $\varphi_f(X)\neq0$ and $z\in K$
\begin{equation}\label{eq:rmin}
    r(Xa,z)\geq \max\{\sqrt{q_+(X)},R\}.
\end{equation}
Now let $p(X,z)$ be the function from the proof of Lemma \ref{lem:cot} for $\phi^0=\psi^0$. Then for $Y\in V^m$
\[
    \widetilde\psi^0(Y,z) = \frac{1}{2}\pi^{\frac{m}{2}}\Gamma\left(\frac{m}{2},\pi r(Y,z)^2\right)\frac{p(Y,z)}{r(Y,z)^m}.
\]
Note that if $n>0$, $\sigma>0$, and $x>x_0$ for a fixed $x_0>0$, then $\left|\frac{d^n}{dx^n} \Gamma(\sigma,x) \right| < C_n x^{\sigma-1}e^{-x}$ for some constant $C_n>0$. Hence the result now follows by a comparison with a theta series.
\end{proof}

\begin{corollary}\label{cor:dcc}
    With notation as above:
    \begin{enumerate}[font = \upshape, label = (\alph*)]
    \item Away from the support of $c(X)_\Gamma$ we have $d\alpha(X,\widetilde\psi^0)_\Gamma = \alpha(X,\varphi^0)_\Gamma$. Further $\alpha(X,\widetilde\psi^0)_\Gamma$ is locally integrable and satisfies the current equation
        \[
            ( - 1)^{n - m + 1}\int_{M} d\eta\wedge \alpha(Xa,\widetilde \psi^0)_\Gamma = \int_M \eta\wedge \alpha(Xa,\varphi^0) - \int_{c(X)_\Gamma} \eta
        \]
        for any compactly supported smooth form $\eta\in \Omega^{n-m}_c(M)$.
        \item Away from the support of $c(T,\varphi_f)_\Gamma$ we have $d\Theta(T,\varphi_f,\widetilde\psi^0,v) = \Theta(T,\varphi_f,\varphi^0,v)$. Further $\Theta(T,\varphi_f,\widetilde\psi^0,v)$ is locally integrable and satisfies the current equation
        \[
            ( - 1)^{n - m + 1}\int_{M} d\eta\wedge \Theta(T,\varphi_f,\widetilde\psi^0,v)_\Gamma = \int_M \eta\wedge \Theta(T,\varphi_f,\varphi^0,v)_\Gamma - \int_{c(T,\varphi_f)_\Gamma} \eta
        \]
        for any compactly supported smooth form $\eta\in \Omega^{n-m}_c(M)$.
        \end{enumerate}
\end{corollary}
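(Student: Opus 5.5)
The plan is to deduce both parts of Corollary \ref{cor:dcc} from Proposition \ref{prp:tce} by unfolding, using the convergence statements of Lemma \ref{lem:cot} and Lemma \ref{lem:tcl}. Part (b) follows from part (a): the set $\{X\in\mathbf{V}(F)^m : Q(X)=T,\ \varphi_f(X)\neq 0\}$ splits into finitely many $\Gamma$-orbits, since $\varphi_f$ is supported on a lattice and $\Gamma$ has finitely many orbits on lattice vectors of fixed Gram matrix. Hence $\Theta(T,\varphi_f,\phi^0,v)=\sum_{X} \varphi_f(X)\,\alpha(Xa,\phi^0)$, a finite sum over representatives $X$, and likewise $c(T,\varphi_f)=\sum_X\varphi_f(X)c(X)$. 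For $T$ not positive definite, every term is either empty-cycle or linearly dependent, so the same reduction applies. So it suffices to prove (a) for a single $Y=Xa$.

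For the pointwise statement in (a), take any point off the support of $c(Y)$. By Lemma \ref{lem:tcl}, the series $\sum_{\gamma}\gamma^\ast\widetilde\psi^0(Y)=\sum_\gamma\widetilde\psi^0(\gamma^{-1}Y)$ (via Lemma \ref{lem:bpl}(a)) converges normally together with its derivatives on a compact neighbourhood. We may therefore differentiate termwise and apply $d\widetilde\psi^0(\gamma^{-1}Y)=\varphi^0(\gamma^{-1}Y)$ from Proposition \ref{prp:tce}. This gives $d\alpha(Y,\widetilde\psi^0)=\alpha(Y,\varphi^0)$.

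For local integrability and the current equation, take $\eta\in\Omega_c^{n-m}(M)$ and choose a compactly supported smooth cutoff $\chi$ on $\mathcal{H}$ with $\sum_{\gamma\in\Gamma}\gamma^\ast\chi=1$. This is possible for torsion-free $\Gamma$; otherwise use a fundamental-domain partition of unity. Set $\tilde\eta=\chi\cdot\pi^\ast\eta\in\Omega_c^{n-m}(\mathcal{D})$. Standard unfolding gives $\int_M\eta\wedge\alpha(Y,\phi^0)=\sum_{\gamma\in\Gamma_Y\backslash\Gamma}\int_{\mathcal{D}}\tilde\eta\wedge\phi^0(\gamma^{-1}Y)$, provided the sum of absolute integrals converges; note that $\sum_{\delta\in\Gamma_Y}\delta^\ast\tilde\eta$ is what is integrated against $\phi^0(Y)$ after regrouping. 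Applying Proposition \ref{prp:tce} termwise to $\pi^\ast\eta\cdot(\text{cutoff})$ then gives three sums. The $\varphi^0$-term refolds by Lemma \ref{lem:cot}. The cycle term refolds to $\int_{c(Y)_\Gamma}\eta$, because the $\Gamma$-translates of $c_Y$ modulo $\Gamma_Y$ meet the compact support in finitely many pieces by properness (Lemma 2.1(b)). For the $d\eta$ term, note that $d\tilde\eta=d\chi\wedge\pi^\ast\eta+\chi\,\pi^\ast d\eta$. The $d\chi$ contribution vanishes after summing over $\gamma$, since $\sum\gamma^\ast d\chi=0$. Making this rigorous requires the absolute convergence discussed next.

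The main obstacle is justifying the interchange of the sum over $\Gamma_Y\backslash\Gamma$ with integration near the singular locus, that is, proving that $\sum_\gamma\int_K|\widetilde\psi^0(\gamma^{-1}Y)|$ converges on a compact set $K$ meeting $\bigcup c_{\gamma^{-1}Y}$. I would split the sum into two parts. Only finitely many $\gamma$ have $c_{\gamma^{-1}Y}\cap K\neq\emptyset$, again by properness, and each of those terms is locally integrable by Proposition \ref{prp:tce}. For the remaining $\gamma$, the bound \eqref{eq:rmin} from the proof of Lemma \ref{lem:tcl} holds uniformly on $K$: it only required $r(\gamma^{-1}Y,z)$ bounded below for the excluded vectors. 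Together with the incomplete Gamma decay, this gives a uniformly convergent theta-type majorant, so dominated convergence justifies every interchange.
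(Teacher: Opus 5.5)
Your argument is the paper's proof written out in detail: Proposition \ref{prp:tce} applied termwise, Lemma \ref{lem:tcl} (and its proof) to control convergence and justify the interchanges near the singular locus, and a partition-of-unity/unfolding step. Two small slips are easily repaired. First, a compactly supported $\chi$ with $\sum_{\gamma}\gamma^\ast\chi=1$ on all of $\mathcal{H}$ exists only when $M$ is compact, but you only need this identity on a neighbourhood of $\pi^{-1}(\operatorname{supp}\eta)$. Second, when $F=\mathbb{Q}$ and $\mathbf{V}$ is isotropic, a degenerate $T$ can have infinitely many $\Gamma$-orbits with nonvanishing forms (e.g.\ multiples of an isotropic vector for $m=1$, $T=0$), so for (b) you should run the same argument directly on the full sum, which Lemma \ref{lem:tcl} already covers, rather than reducing to (a).
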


\begin{proof}
    This follows by Proposition \ref{prp:tce} and Lemma \ref{lem:tcl} together with standard partition of unity argument.
\end{proof}

Corollary \ref{cor:dcc} implies that the currents defined by $\alpha(Y,\widetilde{\psi}^0)$ and $\Theta(T,\varphi_f,\widetilde{\psi}^0,v)$ are sparks in the sense of \cite{MR1993742}. They thus define differential characters on $M$ which we will denote by $\hat c(Y)$ and $\hat{c}(T,\varphi_f,v)$. These give a refinement of the homology classes of the special cycles $c(Y)$ and $c(T,\varphi_f)$.

\section{Linking numbers}\label{sec:3}

To define linking numbers in general it is necessary to keep track of additional homological data. If $a:A\to X$ is a smooth map of manifolds we define the relative homology group $H_m(a)=H_\ast(X,a)$ to be the $m$th homology of the mapping cone of the chain map $a_\ast:C_\bullet(A)\to C_\bullet(X)$. Here we take $C_\bullet$ to be the complex of smooth chains to allow for integration. Thus an element of $H_m(a)$ is represented by a pair $(\sigma,\mu)\in C_m(X)\times C_{m-1}(A)$ with $\partial \sigma = a_\ast \mu$, and such an element is zero if there exists $(\sigma',\mu')\in C_{m+1}(X)\times C_m(A)$ with $\sigma = \partial \sigma'-a_\ast \mu'$ and $\mu = \partial \mu'$. If $A$ is a submanifold of $X$ and $a$ the inclusion, then the map $\sigma\mapsto (\sigma,0)$ gives an isomorphism between the usual groups $H_m(X,A)=H_m(C_\bullet(X) / C_\bullet(A))$ and $H_\ast(a)$.



We now return to the setup of the previous section. We assume from now on that $\Gamma $ is torsion-free. Let $T=\Sym_m(F)$, $v\in\Sym_m(\mathbb{R})_{+}$, and $\varphi_f\in \mathscr{S}(\mathbf{V}(\mathbb{A}_{f})^m)^\Gamma$. We consider a smooth map $c:C\to M$, where $C$ is an oriented closed manifold of dimension $m-1$. Suppose that $s=[(\sigma,\ast)]\in H_m(M,c)$ is class which maps to the fundamental class of $C$ in $H_m(M)$. Then we define
\begin{equation}\label{eq:idef}
    \iota(T,\varphi_f;s) := \lim_{t\to\infty} \int_\sigma \Theta(T,\varphi_f,\varphi^0,vt).
\end{equation}
This is independent of the choice of representative for $s$, since by Lemma \ref{lem:bpl}(d) the integral over any exact chain is zero and the pullback of $\Theta(T,\varphi_f,\varphi^0,vt)$ to $C$ is zero. 
We have also suppressed $v$ from the notation, in line with the following theorem.
\begin{theorem}\label{thm:ili}
    Assume that $c$ is a union of special geodesic cycles of general type. Then 
    \begin{enumerate}[label = (\roman*),font=\upshape]
        \item the limit in \eqref{eq:idef} exists for all $T$, $\varphi_f$, and $v$, and it is independent of $v$;
        \item if $T^{(1)}$ is not positive definite then $\iota(T,\varphi_f;s)=0$.
    \end{enumerate}
\end{theorem}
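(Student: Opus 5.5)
Write $\Theta_\phi(t):=\Theta(T,\varphi_f,\phi^0,vt)$ for $\phi\in\{\varphi,\psi\}$. The plan is to integrate the transgression equation of Lemma~\ref{lem:bpl}(e) in $t$ and to control the boundary term on $c$. Summing \eqref{fte} over $X$ with $Q(X)=T$ (using Lemma~\ref{lem:cot} to differentiate termwise) gives
\[
\frac{d}{dt}\Theta_\varphi(t)=\frac1t\,d\,\Theta_\psi(t).
\]
Integrating over $\sigma$ and using Stokes with $\partial\sigma=c_\ast\mu$, where $\mu$ is the fundamental cycle of $C$, we get
\[
\int_\sigma\Theta_\varphi(t)=\int_\sigma\Theta_\varphi(1)+\int_1^t\Big(\int_C c^\ast\Theta_\psi(t')\Big)\frac{dt'}{t'} .
\]
Existence of the limit in (i) therefore reduces to convergence of $\int_1^\infty \int_C c^\ast \Theta_\psi(t)\,\frac{dt}{t}$.

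The next step is to reduce to the cycle itself. Since $c$ is a union of special geodesic cycles of general type, each component lifts to $\Gamma_0'\backslash c_U$ with $U^\perp=V'$. By Lemma~\ref{lem:fos}, $c^\ast\psi^0(X)=\psi'^0(X')$, where $X'$ is the projection of $X$ to $V'$. The sum over $X$ then unfolds to $\int_{\Gamma'_0\backslash c_U}$ of a theta-type sum whose terms are $\varphi_f(X)\,\psi'^0(X'a\sqrt t)$, with $\tr Q(Xa)$ fixed (equal to $\tr(\tp{a}Ta)$). This is a see-saw setting in which the $U$-component of $X$ is positive definite.

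For the terms whose projection is linearly dependent, $\psi'^0$ vanishes by Lemma~\ref{lem:bpl}(c). For the remaining terms, $\psi'^0(X'a\sqrt t)$ carries the factor $e^{-\pi t\, r(X'a)^2}\,t^{(m-1)/2}$. I would bound $\int_{c_U}$ of each term directly: on the fibration of $c_U$ over $\mathbb R^m$ given by $u(X')$ (Lemma~\ref{lem:tfb} applied in $V'$, when $Q(X')$ is positive definite), $\psi'^0$ is the pullback of $e^{-\pi r^2 t}\rho$. The integral over a compact fundamental domain is then $O(t^{-1/2})$ near $c_{X'}$ and exponentially small away from it. The factor $t^{-1}$ then does not yet give integrability, so instead I would use that $\rho$ restricted to an $(m-1)$-dimensional cycle transverse to $c_{X'}$ has a $t$-independent integral plus an exponentially decaying error, and that a $t$-independent term integrates to zero against the closed $C$ for degree reasons.

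The hard part will be uniform control of the infinitely many $X$. The plan there is to use the estimate \eqref{eq:rmin} from Lemma~\ref{lem:tcl}, together with a comparison with a positive definite theta series in the $U$-component, to get $\sum_X|\int_C c^\ast\psi^0(Xa\sqrt t)|\ll e^{-\epsilon t}$ for the $X$ with $c_X\cap c=\emptyset$. Only finitely many $\Gamma'_0$-orbits meet $c$, and these I would treat individually by the fibration computation above. This shows the limit exists.

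For $v$-independence in (i), note that replacing $v$ by $\tp{b}vb$ for $b$ near the identity changes $\Theta_\varphi(t)$ by an exact form, since $\varphi^0$ is invariant under $\SO(m)$ by Lemma~\ref{lem:bpl}(b) and the scalings are handled by the transgression. In the limit $t\to\infty$, the same boundary estimate kills the contribution on $C$. Alternatively, I would show that the limit equals the $v$-independent intersection-type quantity $\int_\sigma\delta_{c(T,\varphi_f)}$ corrected on $c$.

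For (ii), if $T^{(1)}$ is not positive definite, then no $c_X$ with $Q(X)=T$ is nonempty. Hence on compact $\sigma$ we have $r(Xa\sqrt t)^2\ge tR^2$ uniformly, by the bound \eqref{eq:rmin} of Lemma~\ref{lem:tcl}, and the theta comparison gives $\int_\sigma\Theta_\varphi(t)\to0$.
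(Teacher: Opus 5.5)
Your Stokes reduction is correct and differs from the paper's proof. Integrating the transgression equation gives $\int_\sigma\Theta_\varphi(t)=\int_\sigma\Theta_\varphi(1)+\int_1^t\int_C c^\ast\Theta_\psi(t')\,dt'/t'$, so existence of the limit is equivalent to convergence of the paper's $\beta(T,\varphi_f,v;c)$ (Proposition~\ref{prp:bgc}). The paper never uses $\psi^0$ here. It evaluates the limit geometrically: Proposition~\ref{prp:psv}, signs of transverse intersections, and a local perturbation of $c$ off $c(X)$ contributing $\pm\frac12$.

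\textbf{Intersecting orbits.} Your treatment of the orbits with $c_X\cap c\neq\emptyset$ rests on a wrong dimension count. Here $V'=U^\perp$ has dimension $m$ and signature $(m-1,1)$. So a linearly independent projection $X'$ is a basis of $V'$ with $Q(X')$ of signature $(m-1,1)$. The case ``$Q(X')$ positive definite'' therefore never occurs, $c_{X'}$ is always empty, and the $(m-1)$-dimensional $c_U$ cannot fiber over $\mathbb{R}^m$. The $O(t^{-1/2})$ and ``degree reasons'' discussion is therefore not an argument, and $t^{-3/2}$ would be integrable anyway. The observation you are missing is that the intersecting terms vanish identically on $c$. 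If $z\in c_X\cap c_U$, then $U+\Span X$ lies in the $n$-dimensional positive definite space $z^\perp$, while $\dim U+\dim\Span X=n+1$. Hence $U\cap\Span X\neq0$, $X'$ is linearly dependent, and $c^\ast\psi^0(Xa\sqrt t)=\psi'^0(X'a\sqrt t)=0$ by Lemmas~\ref{lem:fos} and~\ref{lem:bpl}(c). The remaining $X$ avoid a compact fundamental domain of $c_U$. The uniform estimate in the proof of Proposition~\ref{prp:psv} then gives $|\int_C c^\ast\Theta_\psi(t)|\ll e^{-\epsilon t}$, which closes existence.

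\textbf{Independence of $v$.} This part is not proved. Right multiplication by $\SO(m)$ does not change $v$, and a scalar change $v\mapsto\lambda v$ is absorbed by $t\mapsto\lambda t$. So your mechanism covers only directions in which the claim is trivial; for $m\geq2$ the content lies in the non-scalar deformations of $v$. For those you need an additional transgression form, e.g.\ $\iota_A(e^{-\pi r^2}\nu)$ via Cartan's formula, with $A$ the linear vector field on $\mathbb{R}^m$ induced by the variation of $a$. For $\psi^0$, $A$ is the Euler field, which is tangent to $u(Y)(c_U)$, so the pullback vanishes. For general $A$ the pullback to $c$ does \emph{not} vanish on the intersecting orbits, so your boundary estimate does not apply. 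You would need an extra argument, e.g.\ parity under $w\mapsto-w$ on the linear hyperplane $u(Y)(c_U)\subset\mathbb{R}^m$ in the notation of Section~\ref{sec:3}. The paper avoids this entirely: its limit is an explicit sum of intersection signs and Gaussian half-space integrals equal to $\pm\frac12$, visibly independent of $a$. Your ``alternatively'' clause points to exactly this computation without carrying it out.

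\textbf{Part (ii).} The claim that no $c_X$ is nonempty fails for singular semidefinite $T^{(1)}$, e.g.\ $X=(X_1,0)$ with $Q(X_1)>0$, where $c_X=c_{X_1}\neq\emptyset$. Those terms must first be discarded via Lemma~\ref{lem:bpl}(c), as the paper does, before \eqref{eq:rmin} can be used.
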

The proof of Theorem \ref{thm:ili} will occupy the remainder of this section. 

\begin{remark}
    The proof of Theorem \ref{thm:ili} will show that $\iota(T,\varphi_f;s)$ is an integer if $c$ and $c(T,\varphi_f)$ have no intersections. In general we only confirm that $\iota(T,\varphi_f;s)$ is rational in general. It is conceivable that $\iota(T,\varphi_f;s)$ is always an half-integer, as in \cite{christensen2024linking}, but our proof does not verify this.
\end{remark}

\begin{remark}
    If $H_m(M)$ is torsion then $\iota(T,\varphi_f;s)$ also does not depend on the class $s$. 
\end{remark}
\begin{remark}
    If $S$ is an oriented compact manifold with boundary $\partial S=C$ and $s:S\to M$ is a smooth map with $\partial s = c$, then the pushforward of the fundamental class of $S$ determines a class of $H_m(M,c)$ that we suggestively denote $s$ as well. Then
    \[
        \iota(T,\varphi_f;s) = \lim_{t\to\infty}\int_S s^\ast \Theta(T,\varphi_f,\varphi^0,vt)
    \]
    This was the approach we took to define $\iota(T,\varphi_f;s)$ in \cite{christensen2024linking}. However, if $m>2$ then actually it seems to be an open question whether or not $C$ is always the boundary of an oriented compact manifold, regardless of its immersion in $M$ \cite[Question 1.4.]{chen2025non}. 
\end{remark}

\subsection{Vanishing contributions}

We first prove a vanishing statement, which both implies (iii) and will also be used several times in the proof of (i) and (ii). 

For any subset 
\[
    \mathcal{S}\subset \{X\in \mathbf{V}(F)^m:Q(X) = T \}
\]
we define the partial sum
\[
    \Theta(\mathcal{S},\varphi_f,\varphi^0,v) := \sum_{X\in S}\varphi_f(X)\varphi^0(Xa).
\]

\begin{proposition}\label{prp:psv}
    Let $N$ be a compact manifold of dimension $m$ (possibly with corners) and let $n:N\to \mathcal{H}$ be a smooth map whose image does not intersect $c_X$ for any $X\in \mathcal{S}$. Then 
    \[
        \lim_{t\to\infty}\int_N n^\ast \Theta(\mathcal{S},\varphi_f,\varphi^0,vt) = 0.
    \]
\end{proposition}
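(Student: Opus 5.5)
We aim to show that each term $\varphi_f(X)\varphi^0(Xa t^{1/2})$ pulled back to $N$ decays uniformly like a Gaussian in $t$, and that the sum of these bounds over $X\in\mathcal{S}$ is summable, uniformly for $t\ge 1$. The starting point is that $n(N)$ is a compact subset $K\subset\mathcal{H}$ disjoint from every $c_X$ with $X\in\mathcal{S}$. The estimate \eqref{eq:rmin} from the proof of Lemma \ref{lem:tcl} applies verbatim to the subset $\mathcal{S}$ in place of $\{Q(X)=T\}$. We obtain a positive definite quadratic form $q_+$ on $\mathbf{V}(\mathbb{A}_\infty)^m$ and a constant $R>0$ such that
\[
r(Xa,z)\ \geq\ \max\{\sqrt{q_+(X)},R\}
\]
for all $X\in\mathcal{S}$ with $\varphi_f(X)\ne0$ and all $z\in K$. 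The crucial point is that $R>0$. It comes from disjointness: $r(Xa,\cdot)$ vanishes exactly on $c_X$, and only finitely many lattice points $X$ have $q_+(X)$ below any fixed bound. For those finitely many $X$, compactness of $K$ and $K\cap c_X=\emptyset$ give a positive minimum of $r(Xa,z)$ on $K$.

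Next, we bound each term. By definition, $\varphi^0(Xat^{1/2})=t^{m/2}\,du(X_1a)\wedge\cdots\wedge du(X_ma)\,e^{-\pi t\,r(Xa)^2}$, where we abbreviate $X_ia$ for the $i$-th column of $Xa$. The coefficient form $du(X_1a)\wedge\cdots\wedge du(X_ma)$, restricted to $K$ and pulled back by $n$, is bounded in norm by $C\,q_+(X)^{m/2}$, since it is polynomial of degree $m$ in $X$ with coefficients smooth in $z$. Hence on $N$, for $t\ge1$,
\[
\bigl|n^\ast\varphi^0(Xat^{1/2})\bigr|\ \le\ C\,t^{m/2}q_+(X)^{m/2}e^{-\pi t\max\{q_+(X),R^2\}}.
\]
We split the exponent as $e^{-\pi t\max\{q_+(X),R^2\}}\le e^{-\frac{\pi}{2}tR^2}e^{-\frac{\pi}{2}q_+(X)}$, which is valid for $t\ge1$. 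This gives the bound $C\,t^{m/2}e^{-\frac{\pi}{2}tR^2}\cdot q_+(X)^{m/2}e^{-\frac{\pi}{2}q_+(X)}$. Summing over $X$ in the lattice $\Lambda$ containing the support of $\varphi_f$ yields a convergent theta-type series, as in Lemma \ref{lem:cot}. The prefactor $t^{m/2}e^{-\pi tR^2/2}$ tends to $0$. Integrating over the compact $N$ then proves the limit is $0$.

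A detail to check is that in the signature $(n+1,0)$ places the factor $e^{-2\pi\sum_{i\ge2}\tr T^{(i)}}$ is absent: here we only need the first-place Gaussian, and $q_+$ controls all places through the relation $r^2\geq 2(q_+-\text{const})$, where the constant depends only on $T$ and $a$. This constant is absorbed by enlarging $C$.

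The main obstacle is the uniform lower bound $R>0$. It needs (1) the finiteness of lattice points with bounded $q_+$, together with the fact that $r(Xa,z)^2\ge 2(q_+(X)-\mathrm{const})$ forces all but finitely many terms to have $r\ge\sqrt{q_+}$ after adjusting $q_+$ by a factor; and (2) the disjointness hypothesis for the finitely many remaining $X$. Once this is in place, the rest is a routine dominated-convergence/theta-comparison argument identical to Lemma \ref{lem:tcl}.
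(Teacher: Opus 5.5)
Your proof is correct and follows the paper's argument. The key input in both is the lower bound $r(Xa,z)\ge\max\{\sqrt{q_+(X)},R\}$ on the compact image $n(N)$, obtained as in Lemma \ref{lem:tcl} from the finiteness of lattice points of bounded $q_+$ together with the disjointness hypothesis, followed by comparison with a positive definite theta series. The only difference is cosmetic: you split the exponent to get an explicit uniform bound of order $t^{m/2}e^{-\pi tR^2/2}$, whereas the paper shows termwise convergence to $0$ and then uses monotonicity in $t$ to produce a dominating function.
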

\begin{proof}
Fix a volume form $\nu$ on $N$. Then
\[
    n^\ast \varphi^0(X) = e^{ - \pi r(X,n(x))^2} f(X,x)\nu
\]
where $f$ is a homogeneous polynomial in $X\in V^m$ and smooth in $x\in N$. We obtain a bound of $|\int_N n^\ast \Theta(S,\varphi_f,\varphi^0,vt)|$ by 
\begin{equation}\label{eq:bps}
    \sum_{X\in S} |\varphi_f(X)| t^{\frac{m}{2}} e^{ - \pi r_0(Xa)^2 t}f_0(Xa)
\end{equation}
where
\[
    r_0(Y) : = \inf_{x\in N} r(Y,n(x)), \hspace{1cm} f_0(Y) = \sup_{x\in N}|f(Y,n(x))|, \hspace{1cm}\text{for}\;Y\in V^m.
\]
Because the image of $n$ does not meet $c_X$ for any $X\in \mathcal{S}$ we have $r_0(Xa)>0$ for every $X\in \mathcal{S}$. Therefore \eqref{eq:bps} converges termwise to $0$ as $t\to\infty$. We now bound the series by a positive definite theta function and apply dominated convergence.

Let $q^{(1)}_+:V^m\to \mathbb{R}$ be a positive definite quadratic form such that $q^{(1)}_+(X)\leq \sum \tr Q_{n(x)}(Xa)$ for all $x\in N$ and $X\in V^m$, and let $q_+:\mathbf{V}(\mathbb{A}_\infty)^m\to \mathbb{R}$ be the positive definite quadratic form given by the sum of $q_+^{(1)}$ and the completions of $Q$ at $\sigma_2,\ldots,\sigma_r$. In the same way as in the proof of Lemma \ref{lem:tcl}, we obtain a constant $R=R_{T,a}>0$ such that
\begin{equation}\label{eq:rmin2}
    r_0(Xa)\geq \max\{\sqrt{q_+(X)},R\}, \hspace{1cm} X\in \mathcal{S}, \varphi_f(X)\neq0.
\end{equation}
For $Y\in V^m$, the function $te^{-\pi r_0(Y)^2 t}$ is strictly decreasing for $t> 1/(\pi r_0(Y)^2)$, and we conclude from \eqref{eq:rmin2} that $te^{-\pi r_0(Y)^2 t}\leq t_0 e^{-\pi q_+(Y)t_0}$ for $t>t_0:= 1/(\pi R^2)$. As a result 
\[
    te^{-\pi r_0(Y)^2 t}f_0(Y) \leq C_0 e^{ - C_1 q_+ (Y)}, 
\]
for constants $C_0,C_1>0$. This gives the required estimate for applying dominated convergence.
\end{proof}

\begin{proof}[Proof of Theorem \ref{thm:ili}(iii)]
By a barycentric subdivision, we may assume that every chain of $\sigma$ can be lifted to $\mathcal{H}$, and so it suffices to see that the integral over such chains tends to $0$.

    If $T^{(1)}$ is not positive semidefinite then this follows from Proposition \ref{prp:psv} by taking $S= \{X\in V(F)^m:Q(X)=T\}$. If $T^{(1)}$ is positive semidefinite and nonsingular then $\Theta(T,\varphi_f,\varphi^0,v)=0$ by Lemma \ref{lem:bpl}(c)
\end{proof}

\subsection{Preliminary reductions} 

Following the last subsection, we assume that $T^{(1)}$ is positive definite. The sum in \eqref{eq:TTf} contains finitely many $\Gamma$-orbits \cite[p.132]{MR1079646} and we have
\begin{equation}\label{eq:fsd}
    \int_\sigma \Theta(T,\varphi_f,\varphi^0,vt) = \sum_{\substack{X\in \mathbf{V}(F)^m / \Gamma, \\ Q(X) = T}} \varphi_f(X) \int_\sigma \alpha(Xat^{\frac{1}{2}},\varphi^0).
\end{equation}
Hence it suffices to prove the analogous results for the limit
\begin{equation}\label{eq:aix}
    \lim_{t\to\infty} \int_\sigma \alpha(Xat^{\frac{1}{2}},\varphi^0).
\end{equation}

\begin{proposition}\label{prp:pdi}
    Suppose that $Q(X)$ is positive definite, and $c$ is a union of special geodesics of general type. If $s=[(\sigma,\mu)]\in H_m(M,c)$ has boundary $[C]\in H_{m-1}(C)$, then the limit in \eqref{eq:aix} exists and is rational.
\end{proposition}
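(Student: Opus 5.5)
Write $Y=Xa$ and $\Phi_t:=\alpha(Yt^{1/2},\varphi^0)$, $\widetilde\Psi_t:=\alpha(Yt^{1/2},\widetilde\psi^0)$. By Corollary \ref{cor:dcc}(a), applied with $v$ replaced by $vt$, $\widetilde\Psi_t$ is a locally integrable primitive of $\Phi_t$ away from $c(X)$. The plan is to split $\sigma$ according to the position of $c(X)$ relative to $c$ and to reduce the limit to a topological intersection number plus a boundary term on $c$.

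\textbf{Step 1: localize.} Subdivide $\sigma$ barycentrically so that every simplex lifts to $\mathcal{H}$. Then $\int_\sigma\Phi_t$ is a finite sum over simplices and lifts $\gamma^{-1}Y$ of terms $\int\varphi^0(\gamma^{-1}Yt^{1/2})$. Proposition \ref{prp:psv} discards, in the limit, every lift $c_{\gamma^{-1}Y}$ that misses the simplex. Since the $c_{\gamma^{-1}Y}$ form a locally finite family, only finitely many lifts remain.

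\textbf{Step 2: the generic part.} Perturb $\sigma$ rel boundary so that it is transverse to $c(X)$ in its interior; this does not change $s$. Near an interior transverse intersection point, Lemma \ref{lem:tfb} trivializes $u(\gamma^{-1}Y)$ as a fibration over $\mathbb{R}^m$. The pulled-back form $\varphi^0(\gamma^{-1}Yt^{1/2})$ is then $t^{m/2}e^{-\pi t r^2}\nu$ composed with a local diffeomorphism, which tends to $\pm$ the delta mass. This contributes a signed intersection number, an integer. For a term with $c(X)\cap c=\emptyset$, the same reasoning together with Stokes shows that the limit is the classical intersection/linking number.

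\textbf{Step 3: intersections with $c$.} This is the main obstacle. When $c(X)$ meets $c$, transversality cannot be arranged near $\partial\sigma$. The plan there is to use the current equation instead.

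1. Apply Stokes on a neighborhood chain $\sigma_\epsilon$ near $\mu$, writing
\[
\int_{\sigma_\epsilon}\Phi_t=\int_{\partial\sigma_\epsilon}\widetilde\Psi_t + (\text{delta contributions}).
\]
2. Use the explicit form \eqref{eq:utf}, $\widetilde\psi^0=-\tfrac12\pi^{-m/2}\Gamma(\tfrac m2,\pi r^2)\,\omega$, to control the boundary term. The pullback of $\widetilde\psi^0(\gamma^{-1}Yt^{1/2})$ to a lift of $c$ is computed by Lemma \ref{lem:fos}: restricted to $c_U$, it is the corresponding form for the projection $Y'$ of $\gamma^{-1}Y$ to $V'$. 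The form $\Gamma(\tfrac m2,\pi r^2t)\,\omega$ tends pointwise to $0$ off $c_{Y'}$ and is dominated. The delicate piece is an angular contribution concentrating at $c_{Y'}\cap(\text{chain})$.
3. Blow up in polar coordinates around the intersection locus. The limit then becomes $-\tfrac12\pi^{-m/2}\Gamma(\tfrac m2)$ times the integral of $\omega$ over a spherical region $u(\text{chain})\cap S^{m-1}$, cut out by a linear subspace or a cone determined by the projection of $\mathrm{Span}(Y)$ to $U$ and $V'$.
4. Show that these solid-angle fractions are rational. Here I would use that the tangent data are $F$-rational and that $\mu$ can be chosen geodesically near the intersection, so the region is a union of half-spaces or quadrants; rationality then follows because $\omega$ is normalized so that $\pi^{-m/2}\Gamma(m/2)\int_{S^{m-1}}\omega=2$.

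Since $v$ only rescales $r$ linearly, it does not affect these limits. Verifying that the relevant cones are in fact half- or quarter-spaces is the step I expect to be hardest.
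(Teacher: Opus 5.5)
Your Steps 1--2 are sound and essentially match the paper's treatment of the case where $c$ and $c(X)$ are disjoint. Step 3, however, is the actual content of the proposition, and it has a genuine gap; the source of rationality you propose is wrong.

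Gaussian masses of cones bounded by $F$-rational hyperplanes are in general irrational. Already for $m=2$, the sector between $(1,0)$ and $(1,2)$ has mass $\arctan(2)/2\pi$. So ``$F$-rational tangent data'' cannot give rationality, and nothing in the setup produces quarter-spaces.

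What you are missing is a dimension count. If a lift $c_{\gamma^{-1}U}$ of $c$ meets $c_X$, then $W=\gamma^{-1}U+\Span X$ is positive definite of dimension at most $n$, while $\dim\gamma^{-1}U=n+1-m$. Hence $\gamma^{-1}U\cap\Span X\neq0$, and $u(X)$ maps $c_{\gamma^{-1}U}$ into a linear subspace of dimension $\ell\le m-1$. This has two consequences.
\begin{enumerate}
\item The projection of $X$ to $(\gamma^{-1}U)^\perp$ is linearly dependent. By Lemma \ref{lem:fos} and Lemma \ref{lem:bpl}(c), the pullback of $\widetilde\psi^0$ to $c$ therefore vanishes identically for exactly the lifts that matter. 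Your boundary term in Step 3.2 carries no information, and everything sits in the undetermined ``delta contributions'' of Step 3.1. Corollary \ref{cor:dcc} says nothing about these, since $\partial\sigma$ meets $c(X)$, so Step 3.1 is circular as stated.
\item If $\ell=m-1$, then $c_W$ is a point. Any chain leaving $c$ with injective differential of $u(X)$ has a half-space as tangent cone, so it contributes $\pm\frac12$ for purely linear-algebraic reasons: linear images of half-spaces are half-spaces. If $\ell<m-1$, the contribution can be made $0$ by pushing off inside $\gamma^{-1}U\cap\Span X$, so that the $u(X)$-image of the collar has dimension $\ell+1<m$.
\end{enumerate}

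You also still need a representative of $s$ that realizes this local model together with transversality in the interior. You cannot perturb $\sigma$ rel boundary near points of $c\cap c(X)$, because interior intersections may accumulate there.

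The paper supplies this explicitly. It first reduces, via the Jaffee lemma and \eqref{eq:ssd}, to the case where $c(X)$ is embedded. It then takes each branch of $c$ passing through a tubular neighbourhood of $c(X)$ and pushes it off in the direction $Y_m\in\gamma^{-1}U\cap\Span X$, using a bump function. This gives $c'$ disjoint from $c(X)$, and a collar with boundary $[C]-[C']$ whose limit is computed directly as $\pm\frac12$ or $0$. Lemma \ref{lem:prs} then reduces what remains to the disjoint case, which contributes an integer. The factor $1/[\Gamma_X:\Gamma_X\cap\Gamma']$ in \eqref{eq:ssd} is why the paper only obtains rationality, not half-integrality.
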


We first note that it suffices to prove Proposition \ref{prp:pdi} under the assumption that $c(X)$ is an embedding. By the Jaffee lemma \cite[Proposition 2.2.]{MR653949} we may find a finite index normal subgroup $\Gamma'\subset \Gamma$ such that the map $\Gamma_X'\backslash c_X\to M_{\Gamma'}$ is an embedding. Now define $c':C'\to M_{\Gamma'}$ by the fibre product
\[
\begin{tikzcd}
    C' \ar[d] \ar[r,"c'"] & M_{\Gamma'} \ar[d]
    \\
    C \ar[r,"c"'] & M_\Gamma  
\end{tikzcd}
\]
By barycentric subdivision we may choose the representative $(\sigma,\gamma)$ such that every simplex of $\sigma$ and $\gamma$ can be lifted in $[\Gamma:\Gamma']$ ways to a simplex in $M_{\Gamma'}$. We let $\sigma'$ and $\gamma'$ denote the $\Gamma' / \Gamma$-equivariant chains obtained in this way. Then we have the projection formula
\begin{equation}\label{eq:ssd}
    \int_\sigma \alpha(Xt^{\frac{1}{2}},\varphi^0)_\Gamma = \frac{1}{[\Gamma_X:\Gamma_X\cap \Gamma']}\int_{\sigma'} \alpha(Xt^{\frac{1}{2}},\varphi^0)_{\Gamma'}.
\end{equation}
So it suffices to study the limits of the integral on the right side. Now the main idea, which we will realize in the next two subsections, is to perturb $c$ slightly so that it no longer intersects $c(X)$ and then apply the following simple lemma.
\begin{lemma}\label{lem:prs}
    Let $c:C\to M$ be a smooth map, with $C$ any compact manifold. Suppose $[(\sigma_i,\mu_i)]\in H_m(M,c)$, $i=1,2,3$, are such that $[\mu_3]=[\mu_1]+[\mu_2]$ in $H_{m-1}(C)$. If the limit in \eqref{eq:aix} exists for both $\sigma=\sigma_1$ and $\sigma=\sigma_2$, then it also exists for $\sigma=\sigma_3$.
\end{lemma}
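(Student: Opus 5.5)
The plan is to compare the three relative chains inside a single absolute cycle on $M$, and then control the leftover term on $C$ by the transgression formula. Throughout I write $\alpha_t := \alpha(Xat^{\frac{1}{2}},\varphi^0)$ and $\beta_t := \alpha(Xat^{\frac{1}{2}},\psi^0)$. Both are smooth forms on $M$ by Lemma \ref{lem:cot}, and $\alpha_t$ is closed by Lemma \ref{lem:bpl}(d).

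\emph{Step 1 (reduction to a cycle and a chain on $C$).} Since $[\mu_3]=[\mu_1]+[\mu_2]$ in $H_{m-1}(C)$, I choose $\nu\in C_m(C)$ with $\mu_3=\mu_1+\mu_2+\partial\nu$. Then
\[
    z:=\sigma_3-\sigma_1-\sigma_2-c_\ast\nu
\]
satisfies $\partial z=c_\ast(\mu_3-\mu_1-\mu_2-\partial\nu)=0$, so $z$ is a smooth $m$-cycle in $M$. This gives the exact identity
\[
    \int_{\sigma_3}\alpha_t=\int_{\sigma_1}\alpha_t+\int_{\sigma_2}\alpha_t+\int_z\alpha_t+\int_\nu c^\ast\alpha_t .
\]
The first two terms converge as $t\to\infty$ by hypothesis.

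\emph{Step 2 (the cycle term is constant in $t$).} By Lemma \ref{lem:bpl}(e), averaged over $\Gamma_X\backslash\Gamma$ (this is legitimate by the normal convergence in Lemma \ref{lem:cot}), we have $\frac{d}{dt}\alpha_t=\frac1t\,d\beta_t$. Since $z$ is closed, Stokes gives $\frac{d}{dt}\int_z\alpha_t=0$. Hence $\int_z\alpha_t$ is independent of $t$, equal to the pairing of $[z]\in H_m(M)$ with the (constant) de Rham class of $\alpha_t$.

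\emph{Step 3 (the term on $C$ — the main obstacle).} It remains to show that $\lim_{t\to\infty}\int_\nu c^\ast\alpha_t$ exists. When $\dim C=m-1$ this term is identically zero. In general it is not, and it depends on $\nu$ and not only on its class, so an argument is genuinely needed.

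\begin{itemize}
\item[(a)] I would first subdivide $\nu$ barycentrically so that every simplex lifts to $\mathcal{H}$. I then split the simplices into those whose images avoid $\bigcup_\gamma c_{\gamma^{-1}Xa}$ and those lying in a small neighbourhood of it.

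\item[(b)] On simplices of the first kind, Proposition \ref{prp:psv} applies verbatim, with $N$ the simplex and $\mathcal{S}=\Gamma X$. Their contribution tends to $0$.

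\item[(c)] Near $c_{Xa}$, I would use the trivialization of Lemma \ref{lem:tfb}: $\alpha_t$ is locally the pullback under $u(Xa)$ of $t^{m/2}e^{-\pi r^2t}\nu_{\mathbb{R}^m}$. So $\int_{\nu'}c^\ast\alpha_t$ equals $\int_{u\circ c\circ\nu'}t^{m/2}e^{-\pi r^2 t}\,du_1\wedge\cdots\wedge du_m$ for an $m$-simplex $\nu'$ mapped smoothly into $\mathbb{R}^m$. By the change of variables $y=t^{1/2}u$, this is the integral of a Gaussian against the pushforward of the simplex, viewed as an integer-valued degree function $\deg(y)$ on $\mathbb{R}^m$ away from the image of $\partial\nu'$.

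\item[(d)] I would show that $\deg(t^{-1/2}y)$ converges pointwise for almost every $y$ as $t\to\infty$ (to a local degree at $0$), and is dominated by a bound depending only on the smooth simplex. Dominated convergence then gives a finite limit.
\end{itemize}

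If the local-degree convergence in (c)--(d) turns out to be delicate, I would fall back on a second route. The transgression identity
\[
    \int_\nu c^\ast\alpha_t=\int_\nu c^\ast\alpha_1+\int_1^t\int_{\partial\nu}c^\ast\beta_{t'}\frac{dt'}{t'}
\]
rewrites the $\sigma_i$-limits as convergence of $\int_1^\infty\int_{\mu_i}c^\ast\beta_{t'}\,dt'/t'$. Combined with Step 1, this reduces the problem to convergence of the same kind of integral over the chain $\partial\nu=\mu_3-\mu_1-\mu_2$.

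Assembling Steps 1--3 gives the existence of the limit for $\sigma_3$.
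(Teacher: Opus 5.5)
Your Steps 1 and 2, together with your remark that $\int_\nu c^\ast\alpha_t=0$ when $\dim C=m-1$, are exactly the paper's proof. The paper forms the cycle $c_\ast\sigma'+\sigma_3-\sigma_1-\sigma_2$, uses that the integral of $\alpha_t$ over a cycle is constant in $t$, and disposes of the remaining term by $c^\ast\varphi^0(Xat^{1/2})=0$. That is all that is needed. In Section~\ref{sec:3}, $C$ is a closed manifold of dimension $m-1$, and in the application the lemma is used for $C\sqcup C'$, again of dimension $m-1$. The phrase ``any compact manifold'' means ``not necessarily a special cycle'', so you should simply state that you use $\dim C\le m-1$.

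The rest of your Step 3 should be dropped, because it tries to prove something false. Step 3(d) fails: the degree function of a smooth $m$-simplex need be neither bounded near $0$ nor convergent along $t^{-1/2}y$. For $m=2$, take $F(s,\lambda)=\lambda e^{-1/s}(\cos(1/s),\sin(1/s))$. This map is smooth on $[0,1]^2$ and has Jacobian $\lambda s^{-2}e^{-2/s}>0$ in the interior. A generic point $y$ with $|y|=\rho$ small has about $\frac{1}{2\pi}\log(1/\rho)$ preimages, so $\int F^\ast(te^{-\pi t r^2}du_1\wedge du_2)\sim\frac{1}{4\pi}\log t$. Now realize a rescaled copy of $F$ in the coordinates $u(Xa)$ of Lemma~\ref{lem:tfb} on a small disc of a compact surface $C$ mapped into $M$ transversally to $c(X)$; the other translates contribute $o(1)$ by Proposition~\ref{prp:psv}. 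Taking $\sigma_1=\sigma_2=0$, $\mu_1=\mu_2=0$, $\sigma_3=c_\ast\nu$, $\mu_3=\partial\nu$ then gives a counterexample to the lemma once $\dim C\ge m$.

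Your fallback route does not help either. By the very identity you write down, convergence of $\int_1^\infty\int_{\partial\nu}c^\ast\beta_{t'}\,\frac{dt'}{t'}$ is equivalent to existence of $\lim_{t\to\infty}\int_\nu c^\ast\alpha_t$, so the ``reduction'' merely restates the problem.
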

\begin{proof}
We can find a chain $\sigma'$ in $C$ of degree $m$ such that $\partial \sigma' +\mu_3=\mu_1+\mu_2$. Since $\partial \sigma_i = c_\ast \mu_i$, we get that $\sigma'':=c_\ast \sigma' +\sigma_3-\sigma_1-\sigma_2$ is a cycle. But the limit in \eqref{eq:aix} exists for $\sigma''$ (since it is constant), and for $c_\ast\sigma'$ since $c^\ast \varphi^0(Xat^{1/2})=0$.
\end{proof}

\subsection{A simpler case}

We assume from now on that $c(X)$ embeds. In this subsection we prove

\begin{lemma}
    Let $c:C\to M$ be a smooth map, with $C$ any compact oriented manifold. Suppose that $c$ and $c(X)$ are disjoint and $s=[(\sigma,\mu)]\in H_m(M,c)$ has boundary $[C]\in H_{m-1}(C)$. Then the limit in \eqref{eq:aix} exists and belongs to $\mathbb{Z}$.
\end{lemma}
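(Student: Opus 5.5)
The plan is to split the form $\alpha(Xat^{1/2},\varphi^0)$ into a part that concentrates on $c(X)$ and a part that dies off, and to read off the limit as an intersection number. Write $\alpha(Xat^{1/2},\varphi^0)=\sum_{\gamma\in\Gamma_X\backslash\Gamma}\varphi^0(\gamma^{-1}Xat^{1/2})$. Since $c(X)$ embeds, we choose a tubular neighbourhood $N\subset M$ of $c(X)$. Its closure should be disjoint from $c(C)$; this is possible because $c(C)$ is compact and disjoint from the closed set $c(X)$. By Lemma~\ref{lem:tfb} (applied in $\mathcal{H}$ and descended), $N$ can be taken to be the image of $u(Xa)^{-1}(B_\epsilon)\cap\mathcal{H}$, i.e.\ a disc bundle $\Gamma_X\backslash(c_X\times B_\epsilon)$. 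Over $N$ the single term $\gamma=1$ (summed over $\Gamma_X$) is the pullback of $t^{m/2}e^{-\pi r^2t}\nu$ under the fibre coordinate.

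First, I would reduce to a statement about a cycle. Using the Thom isomorphism, the class $s$ determines an intersection number $s\cdot[c(X)]\in\mathbb{Z}$. This is well defined because $\partial\sigma=c_\ast\mu$ lies in $M-c(X)$, so $\sigma$ defines a class in $H_m(M,M-c(X))\cong H_m(N,\partial N)\cong H_0(c(X))$. Next, I would perturb $\sigma$ by a homologous chain, with the boundary fixed away from $N$, so that $\sigma$ is transverse to $c(X)$. Then $\sigma\cap N$ becomes a finite union of normal discs $\{p_k\}\times B_\epsilon$ carrying signs $\epsilon_k$, and $\sum\epsilon_k=s\cdot c(X)$. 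Such a modification changes $\int_\sigma$ only by integrals over exact chains, plus chains mapped into $c(C)$ where the form vanishes, exactly as in the independence argument after \eqref{eq:idef}.

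Second, I would split $\int_\sigma$ into the integral over $\sigma\cap N$ and the integral over $\sigma-N$. For the part outside $N$, lift the simplices to $\mathcal{H}$ by barycentric subdivision. All the translates $c_{\gamma^{-1}X}$ avoid these compact lifts, since their images lie in $c(X)\subset N$. Proposition~\ref{prp:psv} then shows that this part tends to $0$. For the part inside $N$, the terms with $\gamma\notin\Gamma_X$ again vanish in the limit by Proposition~\ref{prp:psv}. To see this, note that on the lifted discs only the component $c_X$ meets them, because $c(X)$ embeds, so the lifts of $N$ can be chosen to avoid the other translates. The remaining term on each disc is $\pm\int_{B_\epsilon}t^{m/2}e^{-\pi r^2t}\nu$, which tends to $\pm1$ because of the delta-convergence already used in the proof of Proposition~\ref{prp:tce}. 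Summing over the discs gives the limit $\sum\epsilon_k\in\mathbb{Z}$.

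The main obstacle is the bookkeeping in the transversality and localization step. We need to justify that $\sigma$ can be replaced by a chain whose intersection with $N$ is exactly a union of fibre discs. We also need to confirm that the orientation of each normal disc, which comes from $U$ via \eqref{eq:tsd}, is compatible with the sign convention for $\nu$ pulled back by $u(Xa)$. I would handle this by a local straight-line homotopy in the product coordinates of Lemma~\ref{lem:tfb}, and apply Lemma~\ref{lem:prs} to absorb the resulting boundary change inside $C$.
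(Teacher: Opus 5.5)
Your proposal is correct and follows essentially the paper's argument. Like the paper, you make the simplices transverse to $c(X)$ while keeping them away from $c(C)$, use Proposition~\ref{prp:psv} to kill everything away from the transverse intersection points (including all translates $c_{\gamma^{-1}X}$ with $\gamma\notin\Gamma_X$), and get $\pm1$ from the Gaussian at each intersection point. The paper is slightly simpler in that it never straightens $\sigma\cap N$ into fibre discs: transversality already makes $u(X_1a)\circ\widetilde\sigma_i$ a diffeomorphism from a neighbourhood of each intersection point onto a small disc, which is all the $\pm1$ computation needs, so the step you flag as the main obstacle can be dropped, as can the orientation check, which is irrelevant for integrality.
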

\begin{proof}
By barycentric subdivision we can assume that no simplex of $\sigma$ intersects both $c$ and $c(X)$, and that all simplices lift to $\mathcal{H}$. By Thom transversality \cite{guillemin2010differential}, we can further assume that $s$ is represented by $(\sigma,\mu)$ where all simplices of $\sigma$ that do not intersect $c$ are transverse to $c(X)$. For this representation suppose that $\sigma=\sum_{i} a_i \sigma_i$, $a_i\in\mathbb{Z}$. For all the $\sigma_i$ that intersect $c$, and hence do not intersect $c(X)$, we have 
\begin{equation}
    \lim_{t\to\infty} \int_{\sigma_i} \alpha(Xat^{\frac{1}{2}},\varphi^0) = 0 
\end{equation}
by Proposition \ref{prp:psv}. 

For $\sigma_i$ that do not intersect $c$, and hence is transverse to $c(X)$, we follows the argument in \cite{christensen2024linking}. Let $\widetilde{\sigma}_i:\Delta^m\to \mathcal{H}$ denote a lift of $\sigma_i$, where $\Delta^m\subset\mathbb{R}^m$ is the standard simplex. We got
\begin{equation}\label{eq:ubn}
    \int_{\sigma_i} \alpha(Xat^{\frac{1}{2}},\varphi^0) = \sum_{X_1\in \Gamma X} \int_{\widetilde\sigma_i} \varphi^{0}(X_1 at^{\frac{1}{2}})
\end{equation}
Suppose that $\widetilde{\sigma}_i$ is transverse to $c_{X_1}$. For any $x\in\widetilde{\sigma}_i^{-1}(c_{X_1})$, this implies that $u(X_1)\circ\widetilde{\sigma}_i$ is regular at $x$, and hence there exists an open neighborhood $x\in A_x\subset \Delta^m$ which is mapped diffeomorphically onto an open disc $0\in B_x\subset \mathbb{R}^m$. Let $\varepsilon_x$ be $+1$ or $-1$ corresponding to as $u(X)\circ \widetilde{\sigma}_i$ is orientation-preserving or not. Then 
\begin{equation}\label{eq:ecc}
    \lim_{t\to\infty}\int_{A_x} \widetilde{\sigma}_i^\ast \varphi^{0}(X_1 at^{\frac{1}{2}}) 
    = 
    \varepsilon_x \lim_{t\to\infty}\int_{B_x} t^{\frac{m}{2}}e^{ -\pi t (u_1^2 +\cdots u_m^2)} du_1\wedge \cdots \wedge du_m 
    =
    \varepsilon_x
\end{equation}
Putting \eqref{eq:ubn} and \eqref{eq:ecc} together with Proposition \ref{prp:psv} gives 
\[
    \lim_{t\to\infty} \int_{\sigma_i} \alpha(Xat^{\frac{1}{2}},\varphi^0) = \sum_{x} \varepsilon_x.
\]
\end{proof}

\subsection{General case}

Now consider a geodesic cycle of general type. We continue to assume that $c(X)$ is embedded, and our strategy will be to construct another map $c':C'\to M$ and a chain $s\in H_m(M,c\sqcup c')$, by slightly pertubing $c$, such that 
\begin{enumerate}[label =(\alph*), font=\upshape]
    \item $\partial s = [C]-[C']$;
    \item $c'$ is disjoint to $c(X)$;
    \item \eqref{eq:aix} equals $\pm\frac{1}{2}$.
\end{enumerate}
For a union of geodesic cycles, we may repeat this process for every component and so in conjunction with Lemma \ref{lem:prs} this will imply Proposition \ref{prp:pdi} and hence Theorem \ref{thm:ili}.

Now fix $(\mathbf{V}',\eta)$ and $\Gamma_0'\subset \Gamma'$ such that $c=c_0'$ and $C=\Gamma_0'\backslash c_U$, as in section \ref{sec:1}. We use an explicit tubular neighborhood construction based on lemma \ref{lem:tfb}. For a continuous function $\varepsilon:\Gamma_X\backslash c_X\to \mathbb{R}_{>0}$, consider 
\[
    T_{X,\varepsilon} =\{(z,u)\in \mathcal{H}:  |u|< \varepsilon(z)\}
\]
and the associated map 
\[
    t_\varepsilon(X): \Gamma_X\backslash T_{X,\varepsilon}\to M.
\]
We pick $\varepsilon$ such that $t_\varepsilon(X)$ is an embedding, and we abuse notation and denote by $t_\varepsilon(X)$ the associated tubular neighborhood as well. Actually by possibly picking $\varepsilon$ smaller, we can also assume that the closure of $t_\varepsilon(X)$ embeds and does not meet $c$. 

Let $C_\varepsilon\subset C$ denote the preimage of $t_\varepsilon(X)$.
For any $\gamma\in \Gamma$ the map
\begin{equation}\label{eq:ccc}
    \left(\gamma^{ - 1 }\Gamma_0' \gamma \cap \Gamma_X\right) \backslash \left(c_{\gamma^{ - 1}U} \cap T_{X,\varepsilon}\right) \to C_\varepsilon, \hspace{1cm} z \mapsto \Gamma_0'\gamma z
\end{equation}
is an open embedding since $t_\varepsilon(X)$ embeds and the action of $\Gamma$ is free. The same proof as \cite{christensen2024linking}(unfolding lemma for geodesics) shows that $C_\varepsilon$ is the disjoint union of the images of the embeddings \eqref{eq:ccc}, as $\gamma$ varies through a set of representatives of $\Gamma_0'\backslash \Gamma / \Gamma_X$.



Thus, if $\widetilde C_\varepsilon\subset C_\varepsilon$ is a connected component we have a uniformization
\[
    \widetilde C_\varepsilon \cong \left(\gamma^{ - 1 }\Gamma_0' \gamma \cap \Gamma_X\right) \backslash \left(c_{\gamma^{ - 1}U} \cap T_{X,\varepsilon}\right).
\]
for some $\gamma\in \Gamma$. Let us assume that $c$ intersects $c_X$ on $\widetilde C_\varepsilon$, and hence that $c_{\gamma^{-1}U}\cap c_X$ is non-empty, since otherwise there is no need to perturb. Thus $W=\gamma^{-1}U+\Span X$ is positive definite and $c_{\gamma^{-1}U}\cap c_X=c_W$. Since $\dim_{\mathbb{R}}W\leq n$, $\dim_\mathbb{R} U = n+1-m$, and $\dim_\mathbb{R} \Span X=m$, it follows that $\gamma^{-1}U\cap \Span X\neq0$. 

Notice that the image of $\widetilde C_\varepsilon$ lies inside the zero locus of $u(Y)$ for $Y\in \gamma^{-1}U\cap \Span X$.
We fix a basis $Y$ of $\Span X$ such that $Y_{\ell+1},\ldots,Y_m$ is a basis for $\gamma^{-1}U\cap \Span X$. Let $X=Yb$, for $b\in\GL_m(\mathbb{R})$, and denote $v= u(X)b^{-1}$. By Lemma \ref{lem:tfb}, we can fix a diffeomorphism $T_{X,\varepsilon}\cong c_X\times \mathbb{R}^m$ of the form $z\mapsto(\ast,v(z))$. Then the restriction of $c$ takes the form  
\[
    c|_{\widetilde C_\varepsilon}:\widetilde C_\varepsilon\to \Gamma_X\backslash T_{X,\varepsilon}, \hspace{1cm} x\mapsto (\widetilde z(x),v_1(x),\ldots v_{\ell}(x),0,\ldots,0) 
\]
for some map $\tilde z:\widetilde C_\varepsilon\to \Gamma_X\backslash c_X$. 

Take $\rho:[0,\infty)\to [0,1]$ to be a smooth function satisfying $\rho(t)=0$ for $t> \frac{2}{3}$ and $\rho(t)=1$ for $t< \frac{1}{3}$. We consider the map
\[
    \widetilde s: \widetilde C_\varepsilon\times [0,\delta]\to \Gamma_X\backslash T_{X,\varepsilon}, \hspace{.8cm} (x,y)\mapsto \left(\widetilde z(x),v_1(x),\ldots v_{\ell}(x),0,\ldots,0,y \rho\left(\frac{|v(x)|}{\varepsilon||b || }\right)\right).
\]
For sufficiently small $\delta$ the image of this map is inside $t_{\varepsilon}(X)$. We may repeat this construction for every connected component of $C_\varepsilon$ and obtain a map $s:C\times [0,\delta]\to M$.

Now we show that, for a given component, the limit in \eqref{eq:aix} converges to $\pm\frac{1}{2}$ if $\ell=m-1$ and zero else. It follows from Proposition \ref{prp:psv} that
\[
    \int_{\widetilde s}\sum_{X_1\in \Gamma X - \{X\}} \varphi^0(X_1 at^{\frac{1}{2}})\to 0
\]
as $t\to\infty$. 
If $\ell<m-1$, then the pullback of $\varphi^0(Xat{1/2})$ is zero, so it also follows in this case that 
\[
    \int_{\widetilde s}\varphi^0(X at^{\frac{1}{2}})\to 0.
\]
So we assume that $\ell=m-1$. Now $\gamma^{-1}U\cap \Span X = \Span Y_{m}$ and hence $\dim_\mathbb{R}W=n$. Hence $c_W$ is a single point, and so $\widetilde z$ is constant. For $0<\varepsilon'<\varepsilon(\widetilde z)$ let
\[
    \widetilde C_{\varepsilon'} =\{ x\in\widetilde C_\varepsilon: |v(x)|<\varepsilon'\}.
\]
Then $(\widetilde C_\varepsilon-\widetilde C_{\varepsilon'})\times [0,\delta]$ does not intersect $c(X)$ so Proposition \ref{prp:psv} implies that
\[
    \lim_{t\to\infty}\int_{\widetilde C_\varepsilon\times [0,\delta]} s^\ast\varphi^0(Xat^{\frac{1}{2}}) = \lim_{t\to\infty}\int_{\widetilde C_{\varepsilon'}\times [0,\delta]} s^\ast\varphi^0(Xat^{\frac{1}{2}}).
\]
Now we apply a variation on Lemma \ref{lem:tfb} for $(\gamma^{-1}U)^\perp$. As in the proof of said lemma, the map
\[
    c_{\gamma^{ - 1}U}\to (\gamma^{ - 1}U)^\perp \cap W
\]
given by taking orthogonal projection inside $V$, is a diffeomorphism. In addition the linear map 
\[
    (\gamma^{ - 1}U)^\perp \cap W\to \mathbb{R}^\ell, \hspace{1cm} ((Y_1, -),\ldots,(Y_\ell, -))
\]
is an isomorphism, so $(v_1,\ldots,v_\ell)$ restricts to coordinates on $c_{\gamma^{-1}U}$. Finally, if we pick $\varepsilon'<\varepsilon/3$ then 

\begin{align*}
    \lim_{t\to\infty}
        \int_{\widetilde C_{\varepsilon'}\times [0,\delta]} s^\ast\varphi^0(Xat^{\frac{1}{2}}) 
    &=
    \lim_{t\to\infty}
    \left(
        \pm\int_{B_{\varepsilon'}^{m - 1}}\int_0^{\delta} t^{\frac{m}{2}} e^{ - \pi t(|v|^2 + y^2)} dv dy 
    \right)
    \\
    &=
    \pm\int_{\mathbb{R}^{m - 1}}\int_0^\infty e^{ - \pi(|v|^2 + y^2)} dv dy = \pm\frac{1}{2}.
\end{align*}
\qed

\section{Non-holomorphic series}\label{sec:4}

Let $T=\Sym_2(F)$, $v\in\Sym_2(\mathbb{R})_{+}$, and $\varphi_f\in \mathscr{S}(\mathbf{V}(\mathbb{A}_{f})^m)^\Gamma$. For a compact special cycle of general type $c$, we let 
\[
    \beta(T,\varphi_f,v;c) : = \int_1^\infty\int_c \Theta(T,\varphi_f,\psi^0,vt) \frac{dt}{t}.
\]
We will see shortly that the outer integral indeed converges absolutely. Before then, let us also define the associated '$q$-series'. Let $\mathfrak{H}_m\subset\Sym_m(\mathbb{C})$ denote genus $m$ Siegel space. For $\tau = (\tau^{(1)},\ldots,\tau^{(r)})\in \mathfrak{H}_m^{r}$ we let $u^{(i)}$ and $v^{(i)}$ denote the real and imaginary part of $\tau^{(i)}$ resp., and we use the short-hand notation $q^T := \exp(2\pi i \sum_{i=1}^r\tr(\tau^{(i)} T^{(i)}))$. We let
\begin{equation}\label{eq:dgy}
    G^\ast(\tau,\varphi_f;c) : = \sum_{T\in\Sym_m(F)}\beta(T,\varphi_f,v^{(1)};c) q^T.
\end{equation}
The main result of this section is
\begin{proposition}\label{prp:bgc}
    $\beta$ and $G^\ast$ converge absolutely.
\end{proposition}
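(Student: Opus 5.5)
The plan is to bound everything by a single positive definite theta series restricted to the compact cycle $c$, in the spirit of Lemma~\ref{lem:cot} and Proposition~\ref{prp:psv}. Write $c:C\to M$ with $C=\Gamma_0'\backslash c_U$ compact, and cover $C$ by finitely many compact pieces $N_j$, each lifting to $\mathcal{H}$. On each piece, as in Lemma~\ref{lem:cot}, I write
\[
n^\ast\psi^0(Y)=e^{-\pi r(Y,n(x))^2}p(Y,x)\,\nu ,
\]
where $p$ is homogeneous of degree $m-1$ in $Y$. Then $\int_{N_j}\Theta(T,\varphi_f,\psi^0,vt)$ is bounded by
\[
\sum_{Q(X)=T}|\varphi_f(X)|\,t^{\frac{m-1}{2}}e^{-\pi t\, r_0(Xa)^2}f_0(Xa),
\]
with $r_0$ and $f_0$ the infimum and supremum over $N_j$.

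For fixed $T$, I split the sum into two parts: the $X$ with $c_X\cap c\neq\emptyset$, and the rest. For the second part, the estimate \eqref{eq:rmin2} from Proposition~\ref{prp:psv} gives $r_0(Xa)\ge\max\{\sqrt{q_+(X)},R\}$. Hence the $t$-integrand decays like $e^{-\pi R^2 t}$ uniformly, and $\int_1^\infty\cdots\frac{dt}{t}$ converges absolutely. For the first part there are only finitely many such $X$ modulo $\Gamma$, by compactness of $c$ and properness. For each such $X$ I reduce $\int_c\psi^0(Xat^{1/2})$ via Lemma~\ref{lem:fos}: restricted to $c_{\gamma^{-1}U}$ it equals $\psi'^0$ of the projection $X'$ of $Xa$. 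Since $\dim U^\perp=n'+1$ and $X'$ has a nontrivial kernel direction (the intersection argument of Section~\ref{sec:3}), $\psi'^0(X't^{1/2})$ is the pullback of the universal form $t^{\frac{m-1}{2}}e^{-\pi t r^2}\rho$ through a map of rank less than $m$. Integrating over $C$ near the intersection, in coordinates where $u(X')$ has at most $m-1$ nonzero components, I get roughly
\[
\int_{\mathbb{R}^{m-1}}t^{\frac{m-1}{2}}e^{-\pi t|v|^2}\,\iota_{\partial_r}(dv)\sim t^{-1/2},
\]
because $\rho$ carries an extra factor of $r$. This is integrable against $dt/t$ on $[1,\infty)$. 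So $\beta$ converges absolutely. When $T^{(1)}$ is not positive definite there are no intersections, and only the first easy estimate is needed.

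For $G^\ast$ I need a bound on $|\beta(T,\varphi_f,v;c)|$ that is summable against $|q^T|=e^{-2\pi\sum\tr(v^{(i)}T^{(i)})}$. I would avoid estimating term by term. Instead I bound
\[
\sum_T|\beta(T)|\,|q^T|\le\int_1^\infty\int_c\sum_{X\in\mathbf{V}(F)^m}|\varphi_f(X)|\,|\psi_\infty(X a_\infty t^{1/2})|\,e^{2\pi t\tr(\tp{a}T^{(1)}a)-2\pi\tr(\tp aT^{(1)}a)}\cdots\frac{dt}{t}.
\]
The exponent recombines to the majorant $\tr Q_z$ at place $1$ with $vt$, and to the plain $\tr Q$ with $v^{(i)}$ at the other places. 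The difference between $t$ and $1$ is harmless: $\tr Q$ is bounded below on the relevant lattice cosets with fixed $T$ sign structure only after pairing the factor $e^{-2\pi(t-1)\tr(\tp aTa)}$ with $e^{-\pi t r^2}$. Concretely, $\tr Q_z(Xa)t-(t-1)\tr Q(Xa)=\tr Q(Xa)+\tfrac{t}{2}r(Xa,z)^2$, so the weight is
\[
e^{-2\pi\tr Q_z(Xa)}\,e^{-\pi(t-1)r(Xa,z)^2}.
\]
The first factor gives a convergent positive definite theta series as in Lemma~\ref{lem:cot}, uniformly in $z\in c$. The second factor gives decay in $t$ away from the intersection loci. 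The finitely many $\Gamma$-orbits that meet $c$ have to be handled as in the previous paragraph, but now uniformly in $T$. I expect this uniformity near the intersections $c\cap c_X$ to be the main obstacle. To handle it, I would try the see-saw decomposition $V=V'\oplus U$. On $c$, $r(Xa,z)=r'(X'a,z)$ depends only on the $V'$-component, so the $U$-components contribute a convergent positive definite Gaussian. The $V'$-part is a genus $m$ theta integral over $c$ in signature $(n',1)$ with $m>n'$, where vectors are automatically degenerate. There I would use the $t^{-1/2}$ bound from the rank deficiency uniformly via Fubini.
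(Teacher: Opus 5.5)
Your fixed-$T$ argument for $\beta$ reaches the right conclusion, but for a simpler reason than you give, and that reason matters for $G^\ast$. If $c_X$ meets a lift $c_{\gamma^{-1}U}$ of $c$, then $\Span X\cap\gamma^{-1}U\neq0$. Hence the projection $X'$ of $X$ to $(\gamma^{-1}U)^\perp$ is linearly dependent, and by Lemma \ref{lem:fos} and Lemma \ref{lem:bpl}(c) the form $\psi^0(Xat^{1/2})$ restricts to \emph{zero} on that lift. No $t^{-1/2}$ analysis is needed. Conversely, since $m=n'+1=\dim V'$, the terms that survive on $c$ are exactly those for which $X'$ is a basis of $V'$, so $Q(X')$ has signature $(n',1)$. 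Your closing claim that in the $V'$-problem the vectors are ``automatically degenerate'' is therefore false. Only the positive semidefinite tuples are degenerate, and they contribute nothing. The nondegenerate ones make up all of $G^\ast$, and for them there is no rank deficiency for your $t^{-1/2}$ bound to exploit.

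The genuine gap is uniformity for these nondegenerate terms. On $c$ one has $r(Xa,z)^2=r'(X'a,z)^2\geq\mu(X'a)$, where $-\mu(X'a)$ is the negative eigenvalue of $4Q(X'a)$. This $\mu$ is not bounded below as $X$ runs over the lattice. Already for a closed geodesic, replace a lattice basis $(X_1',X_2')$ with $Q(X_1')>0$ by $(X_1',X_2'+kX_1')$: this keeps $\det Q(X')$ fixed while the positive eigenvalue grows, forcing $\mu\to0$. So infinitely many $c_X$ pass arbitrarily close to $c$. Your weight $e^{-2\pi\tr Q_z(Xa)}e^{-\pi(t-1)r(Xa,z)^2}$ then bounds the $t$-integral only by a quantity that blows up as $\mu\to0$. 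The resulting sum over $X$ is therefore not controlled, and the interchange of $\sum_T$, $\int_c$ and $\int_1^\infty$ is not justified.

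The paper handles exactly this point. After the see-saw reduction (Lemma \ref{lem:fos}) to the top-degree case for $\mathbf{V}'$, where $c$ is the whole manifold, it unfolds each orbit to an integral over all of hyperbolic space and evaluates it in closed form (Lemma \ref{lem:ewa}). The density there has constant sign and carries a prefactor $\sqrt{\mu}$, which is what compensates for the slow decay in $t$. The result is $<e^{-\pi\mu}$ because $\mathfrak{I}<1$, so each orbit contributes at most $e^{-2\pi\nm{Q(Xa)}_1}$ to $|\beta|\,|q^T|$. Lemma \ref{lem:ail} (orbit count polynomial in $N_{F/\mathbb{Q}}(\det T)$) and summation over the lattice of $T$ then finish the proof.

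If you want to keep your majorant-on-$c$ approach, which would spare you the orbit count, you must add an input such as $\mu(X'a)\gg 1/\mathrm{poly}(q_+(X))$. This follows from two facts: $\det Q'(X')\in (F')^{\times}$ has bounded denominators, so its norm is bounded below; and its other conjugates are bounded polynomially in $q_+(X)$. The Gaussian $e^{-2\pi q_+(X)}$ then absorbs the polynomial loss. As written, your proposal supplies no such estimate.
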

Before proving Proposition \ref{prp:bgc} we make some preliminary calculations.

\subsection{Orbital integrals}

For $a_1,\ldots,a_n,b>0$ define 
\[
    \mathfrak{I}(a_1,\ldots,a_n,b) : = \frac{1}{\pi }\int_0^\infty \frac{x^{ - 1/2}e^{ - \pi bx}}{(1 + x )\prod_{i = 1}^{n} (1 + x /a_i)^{1/2}} dx.
\]
The normalization factor $1/\pi$ gives the convenient inequality
\begin{equation}\label{eq:Ini}
    0 < \mathfrak{I}(a_1,\ldots,a_n,b) < \frac{1}{\pi}\int_{0}^\infty \frac{x^{ - 1 /2}}{1 + x} dx = 1.
\end{equation}
For $X\in V^{n+1}$, we let 
\[
    \varepsilon(X) : = \begin{cases} 
        + 1 & \text{if $X$ is a positively oriented basis for $V$;}
        \\ 
        -1 & \text{if $X$ is a negatively oriented basis for $V$;}
        \\
        0 &\text{if $X$ is not a basis.}
    \end{cases} 
\]

\begin{lemma}\label{lem:ewa}
    Let $X\in V^{n+1}$ be a basis, and let $\lambda_1,\ldots,\lambda_n,-\mu$ denote the eigenvalues (with multiplicity) of $4Q(X)$, with $\lambda_1,\ldots,\lambda_n,\mu>0$. Then 
    \[
        \int_1^\infty
        \int_\mathcal{H} 
        \psi^0(Xt^{\frac{1}{2}})\frac{dt}{t} 
        =
        \varepsilon(X)
        \frac{
            e^{
                -\pi \mu
            }
        }
        {
            \prod_{i = 1}^n 
            (
                1 + \mu/ \lambda_i
            )
        } 
        \mathfrak{I}\left(
            \frac{\lambda_1}{\mu} + 1,
            \ldots,
            \frac{\lambda_n}{\mu} + 1,
            \mu
        \right)
        .
    \]
\end{lemma}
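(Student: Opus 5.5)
The plan is to reduce to a diagonal configuration using the equivariance properties of $\psi^0$, and then compute the resulting integral over $\mathcal{H}$ explicitly in affine coordinates.

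\emph{Reduction to diagonal form.} Since $X$ is a basis of $V$, it cannot span a positive semidefinite subspace, so $c_X$ is empty and $\psi^0(Xt^{1/2})$ is a smooth $n$-form on $\mathcal{H}$. The first step uses Lemma \ref{lem:bpl}(b) to replace $X$ by $Xh$ with $h\in\SO(n+1)$ chosen so that $Q(Xh)={}^t h\,Q(X)h$ is diagonal. Then $X_1,\ldots,X_n$ are pairwise orthogonal spacelike vectors with $2(X_i,X_i)=\lambda_i$, and $X_{n+1}$ is timelike with $2(X_{n+1},X_{n+1})=-\mu$. Next I use Lemma \ref{lem:bpl}(a) with some $g\in\OO(V)$ to move $X$ to $(\sqrt{\lambda_1/2}\,e_1,\ldots,\sqrt{\lambda_n/2}\,e_n,\sqrt{\mu/2}\,e_0)$, where $e_0,\ldots,e_n$ is a fixed standard basis with $\mathcal{H}$ the sheet $z_0>0$. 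The orientation and sheet bookkeeping ($g$ may reverse the orientation of $V$ or swap the two sheets of $\mathcal{D}$, which acts on $\int_{\mathcal{H}}$ by a sign) should account exactly for the factor $\varepsilon(X)$. I would check this by tracking how $\varepsilon$ and the orientation convention on $\mathcal{D}$ transform under $g$.

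\emph{Explicit integral over $\mathcal{H}$.} I parametrize $\mathcal{H}$ as a graph over $z'=(z_1,\ldots,z_n)\in\mathbb{R}^n$ with $z_0=(1+|z'|^2)^{1/2}$. Then
\[
r(X,z)^2=\tfrac12\Big(\sum_i\lambda_i z_i^2+\mu z_0^2\Big)=\tfrac{\mu}{2}+\tfrac12\sum_i(\lambda_i+\mu)z_i^2 .
\]
Up to normalizing the scaling of $X$ (the factor $4Q$ in the statement suggests $r^2$ should come out as $\mu+\sum(\lambda_i+\mu)z_i^2$ after the scaling $t$, which I will fix carefully), this produces the factor $e^{-\pi\mu t}$. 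For the form itself I write $\psi^0=\tfrac12 e^{-\pi r^2}u(X)^\ast\rho$. A short determinant computation, using that $z\mapsto\sum_i z_i\partial_{z_i}$ minus the radial field is tangent to $\mathcal{H}$, should give
\[
u(X)^\ast\rho\big|_{\mathcal{H}}=\pm\Big(\prod_i\lambda_i\,\mu\Big)^{1/2}2^{-(n+1)/2}\,\frac{dz_1\cdots dz_n}{z_0}.
\]
So the inner integral becomes the constant times $t^{(n+1)/2-1}\int_{\mathbb{R}^n}e^{-\pi t r^2}z_0^{-1}\,dz'$.

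\emph{Gaussian reduction and the $t$-integral.} To handle $1/z_0$ I use $z_0^{-1}=\pi^{-1/2}\int_0^\infty s^{-1/2}e^{-sz_0^2}\,ds$, which makes the $z'$-integral Gaussian. This yields a product $\prod_i\big(t(\lambda_i+\mu)+s/\pi\big)^{-1/2}$ together with $e^{-\pi\mu t-s}$. I then integrate over $t\in[1,\infty)$ and change variables, writing $s=\pi\mu t x$ or something close to it. This should turn the double integral into $e^{-\pi\mu}$ times an integral of the form defining $\mathfrak{I}$. The factor $(1+x)^{-1}$ arises from integrating $\int_1^\infty e^{-\pi\mu t(1+x)}\,dt$ after the substitution, and the prefactor $\prod_i(1+\mu/\lambda_i)^{-1}$ comes from combining $(\prod\lambda_i\mu)^{1/2}$ with $\prod(\lambda_i+\mu)^{-1/2}$. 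I would verify the final normalization against the inequality \eqref{eq:Ini} and the case $n=1$.

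\emph{Main obstacle.} I expect the hardest part to be this last bookkeeping step: choosing the substitution so that the arguments come out exactly as $\lambda_i/\mu+1$ and $b=\mu$, and tracking all powers of $2$, $\pi$ and $t$. Fubini is justified throughout because all integrands are positive up to the global sign.
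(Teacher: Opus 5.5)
Your route is essentially the paper's. You diagonalize $Q(X)$ via Lemma \ref{lem:bpl}(b) and use explicit coordinates on $\mathcal H$; the paper uses $u_1,\dots,u_n$ directly, and your $\OO(V)$-move plus graph coordinates is equivalent. You then restrict $\rho$, linearize the square root with $C^{-1/2}=\pi^{-1/2}\int_0^\infty x^{-1/2}e^{-Cx}\,dx$, and conclude by Fubini. However, the bookkeeping you flagged is wrong as written, in three places.

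\textbf{Normalization of $\mathcal H$.} $\mathcal D=\{Q(z)=-1\}$ with $Q(z)=\frac12(z,z)$, so $(z,z)=-2$ and $z_0=(2+|z'|^2)^{1/2}$. Then $r(X,z)^2=\mu+\frac12\sum_i(\lambda_i+\mu)z_i^2$ exactly. Rescaling $X$ is not an available fix, since it changes $\lambda_i$ and $\mu$. Take $(e_1,\dots,e_n,e_0)$ positively oriented. Then $dz_1\cdots dz_n$ is positive on $\mathcal H$ and
\[
u(X)^*\rho|_{\mathcal H}=(-1)^{n+1}\,2^{(1-n)/2}\Big(\mu\prod_i\lambda_i\Big)^{1/2}\frac{dz_1\cdots dz_n}{z_0},
\]
so your constant is off by a factor $2$. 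The substitution that removes all powers of $t$ is $s=\frac{\pi\mu t}{2}x$, made for fixed $t$ before the $t$-integral.

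\textbf{The prefactor.} $(\mu\prod_i\lambda_i)^{1/2}\prod_i(\lambda_i+\mu)^{-1/2}=\mu^{1/2}\prod_i(1+\mu/\lambda_i)^{-1/2}$, not $\prod_i(1+\mu/\lambda_i)^{-1}$.

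\textbf{The sign.} Your $\OO(V)$-bookkeeping is fine: both sides change by $\det g$. Sheet swaps are harmless, because $z\mapsto -z$ and $X\mapsto -X$ each contribute $(-1)^{n+1}$. But the standard configuration itself contributes $(-1)^{n+1}$.

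Carried out correctly, your computation gives
\[
\int_1^\infty\int_{\mathcal H}\psi^0(Xt^{\frac12})\frac{dt}{t}=(-1)^{n+1}\varepsilon(X)\,\frac{e^{-\pi\mu}}{2\prod_{i=1}^n(1+\mu/\lambda_i)^{1/2}}\;\mathfrak I\Big(\frac{\lambda_1}{\mu}+1,\dots,\frac{\lambda_n}{\mu}+1,\mu\Big).
\]

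This differs from the displayed statement, but the discrepancy comes from the paper, not from your method:
\begin{itemize}
\item In the paper's proof, the rescaling $u_i\mapsto u_i\,(t(1+\mu/\lambda_i))^{-1/2}$ has Jacobian $\prod_i(t(1+\mu/\lambda_i))^{-1/2}$, yet the next line carries $\prod_i(1+\mu/\lambda_i)^{-1}$.
\item The proof works with $\psi^0=e^{-\pi r^2}\rho$. The displayed definition of $\psi^0$, which is the normalization forced by \eqref{fte}, has a factor $\frac12$.
\item The sign $\varepsilon(X)$ is right only for odd $n$.
\end{itemize}
None of this affects the lemma's only use, the upper bound via \eqref{eq:Ini}. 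Still, you should aim to prove the corrected formula rather than force agreement with the displayed constant.
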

\begin{proof}
By Lemma \ref{lem:bpl}(b) we may assume without loss of generality that $Q(X)$ is diagonal, so that
\[
    Q(X) = \frac{1}{4}
    \begin{pmatrix} 
        \lambda_1 & & &
        \\
        & \ddots & &\\
        &&\lambda_n &
        \\
        &&&- \mu
    \end{pmatrix}.
\]
For notational convenience let us denote $u_i=u(X_i)$. 
Lemma \ref{lem:tfb} applied for the tuple $(X_1,\ldots,X_n)$ implies that $u_1,\ldots,u_n$ provide a diffeomorphism $\mathcal{H}\cong \mathbb{R}^n$ with orientation given by $\varepsilon(X)$. We have
\[
    z = 2\left(\sum_{i = 1}^n \frac{u_i}{\lambda_i}X_i - \frac{u_{n + 1}}{\mu}X_{n + 1}\right),
\]
from which we compute 
\begin{equation}\label{eq:urf}
    - 1 = Q(z) = \sum_{i = 1}^n \frac{u_i^2}{\lambda_i} - \frac{u_{n + 1}^2}{\mu},
\end{equation}
and
\begin{equation}\label{eq:urd}
    \frac{u_{n + 1}}{\mu}du_{n + 1} =\sum_{i = 1}^n\frac{u_i}{\lambda_i}du_i.
\end{equation}
Using \eqref{eq:urd} and then \eqref{eq:urf} we obtain 
\begin{align*}
    \sum_{i = 1}^{n + 1} ( - 1)^{i - 1} du_1&\wedge\cdots \wedge \widehat{d}u_i\cdots\wedge du_{n + 1} 
    \\
    &=
    \left(
        \sum_{i = 1}^n \frac{( - 1)^{n + 1}u_i^2\mu}{u_{n + 1}\lambda_i}+( - 1)^n u_{n + 1} 
    \right)du_1\wedge \cdots\wedge du_n
    \\
    &=
    ( - 1)^n\frac{\mu}{u_{n + 1}}du_1\wedge \cdots\wedge du_n
    .
\end{align*}
Using \eqref{eq:urf} we also got 
\[
    \sum_{i = 1}^{n + 1} u_i^2 = \mu +\sum_{i = 1}^n \left(1 +\frac{\mu}{\lambda_i}\right)u_i^2 .
\]
Thus
\begin{align*}
    \int_1^\infty&\int_\mathcal{H} 
    \psi^0(Xt^{\frac{1}{2}})\frac{dt}{t} 
    \\
    &= \varepsilon(X)
    \sqrt{\mu}
    \int_1^\infty 
    \int_{\mathbb{R}^n} 
    t^{\frac{n + 1}{2}}
    \frac{
        \exp( - \pi t \left(\mu +\sum_{i = 1}^n \left(1 + \frac{\mu}{\lambda_i}\right)u_i^2\right))
    }
    {
        \left(
            1 +\sum_{i = 1}^n \frac{u_i^2}{\lambda_i}
        \right)^{1/2}
    } 
    du_1\cdots du_n\frac{dt}{t}
    \\
    &= \varepsilon(X)
    \frac{\mu^{1/2}}{\prod_{i = 1}^n (1 + \mu /\lambda_i)} 
    \int_1^\infty 
    \int_{\mathbb{R}^n} 
    e^{ -\pi t \mu}
    \frac{
        \exp( - \pi \sum_{i = 1}^n u_i^2)
    }
    {
        \left(
            t +\sum_{i = 1}^n \frac{u_i^2}{\lambda_i + \mu}
        \right)^{1/2}
    } 
    dudt
\end{align*}
It remains to verify the integral identity 
\begin{equation}\label{eq:aii}
    b^{1 /2}e^{ \pi b}\int_1^\infty 
    \int_{\mathbb{R}^n} 
    e^{ -\pi bt}
    \frac{
        \exp( - \pi \sum_{i = 1}^n u_i^2)
    }
    {
        \left(
            t+\sum_{i = 1}^n \frac{u_i^2}{A_i}
        \right)^{1/2}
    } 
    dudt
    = 
    \mathfrak{I}\left(
        \frac{A_1}{b},\ldots,\frac{A_n}{b},b
    \right)
    .
\end{equation}
To see this, apply the identity $C^{ -1/2}=\pi^{ - 1/2}\int_0^\infty x^{ - 1/2}e^{- Cx}dx$ to $C= t+\sum_{i = 1}^n \frac{u_i^2}{A_i}$ and then integrate out the $u$ and $t$ variable of the resulting $(n+2)$-fold integral using Fubini's theorem.

Finally, the first equality also follows from Fubini's theorem.
\end{proof}

\begin{remark}
    By Fubini's theorem
    \[
        \int_1^\infty
        \int_\mathcal{H} 
        \psi^0(Xt^{\frac{1}{2}})\frac{dt}{t} 
        =-\int_\mathcal{H} 
        \widetilde\psi^0(X) .
    \]
\end{remark}

\subsection{Proof of Proposition \ref{prp:bgc}}

We first consider the case where $m=n+1$ and $c=M$. Fix $a\in\GL_m^+(\mathbb{A}_\infty)$ with $\tp{a}a=v$. We have 
\[
    \beta(T,\varphi_f,v^{(1)};c) = \sum_{\substack{X\in\mathbf{V}(F)^m /\Gamma \\ Q(X) = T}} \varphi_f(X) \int_1^\infty \int_M \alpha(X a^{(1)}t^{\frac{1}{2}},\psi^0) \frac{dt}{t}.
\]
For $T\in \Sym_m(\mathbb{R})$ with eigenvalues $\lambda_1\ldots,\lambda_m$ let $\nm{T}_1=\sum_{i=1}^m|\lambda_i|$, and for $T\in \Sym_m(\mathbb{A}_\infty)$ let $\nm{T}_1 = \sum_{i=1}^r \nm{T^{(i)}}_1$.

By unfolding and using Lemma \ref{lem:ewa} and \eqref{eq:Ini}
\begin{align*}
    \left\vert
        \int_1^\infty \int_M \alpha(Xa^{(1)}t^{\frac{1}{2}},\psi^0) \frac{dt}{t} 
    \right\vert
    &= 
    \left\vert
        \int_1^\infty \int_\mathcal{H} \psi^0(Xa^{(1)}t^{\frac{1}{2}})\frac{dt}{t}
    \right\vert
    \\
    &< \exp( - 2\pi ( || Q(Xa)||_1-\tr Q(Xa)))
\end{align*}

\begin{lemma}\label{lem:ail}
    There is $C>0$ such that
    \[
    \sum_{\substack{X\in\mathbf{V}(F)^m /\Gamma \\ Q(X) = T}} \vert\varphi_f(X) \vert \ll_{\varphi_f,\Gamma} |N_{F / \mathbb{Q}}(\det T)|^C. 
\]
\end{lemma}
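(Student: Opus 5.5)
The plan is to reduce the count to counting sublattices of finite index, and then to bound the number of $\Gamma$-orbits of bases inside each sublattice by an index of congruence subgroups. Both counts should be polynomial in $|N_{F/\mathbb{Q}}(\det T)|$. We are in the case $m=n+1=\dim\mathbf{V}$. If $\det T=0$, then by Lemma \ref{lem:bpl}(c) only linearly dependent tuples occur, and these contribute nothing in the application, so we assume $T$ is nonsingular. Then every $X$ with $Q(X)=T$ is an $F$-basis of $\mathbf{V}$.

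\textbf{Step 1 (reduction to a lattice).} Since $\varphi_f$ is a Schwartz function, it is bounded and supported on $\Lambda^m$ for some $\mathcal{O}_F$-lattice $\Lambda\subset\mathbf{V}(F)$. Shrinking $\Lambda$ if necessary, we may assume $\Gamma$ preserves $\Lambda$. Passing to a finite-index subgroup of $\Gamma$ changes the count by at most a constant factor. So it suffices to bound the number of $\Gamma$-orbits of $X\in\Lambda^m$ with $Q(X)=T$.

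\textbf{Step 2 (the sublattice spanned by $X$).} Put $L_X=\mathcal{O}_FX_1+\cdots+\mathcal{O}_FX_m\subset\Lambda$. Comparing discriminants gives $N(\det T)\asymp_\Lambda [\Lambda:L_X]^2$. Hence the ideal $\mathfrak{d}=\mathrm{Ann}(\Lambda/L_X)$ satisfies $\mathfrak{d}\Lambda\subset L_X$ and $N(\mathfrak{d})\le[\Lambda:L_X]\ll N(\det T)^{1/2}$. Consequently $L_X$ corresponds to a submodule of $\Lambda/\mathfrak{d}\Lambda$. For each of the polynomially many ideals $\mathfrak{d}$ of bounded norm, the number of submodules of $\Lambda/\mathfrak{d}\Lambda\cong(\mathcal{O}/\mathfrak{d})^m$ is at most $N(\mathfrak{d})^{m^2}$. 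So the number of possible $L_X$ is $\ll N(\det T)^{C_1}$.

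\textbf{Step 3 (orbits with fixed $L_X$).} Fix such a lattice $L'$ and one $X_0$ with $L_{X_0}=L'$ and $Q(X_0)=T$. Any other $X$ with $L_X=L'$ and $Q(X)=T$ has the form $X=gX_0$, where $g\in\OO(L')$ is the isometry sending $X_0$ to $X$. Thus the number of $\Gamma$-orbits of such $X$ is at most the number of double cosets $(\Gamma\cap\OO(L'))\backslash\OO(L')$, up to the orientation-preserving condition, which costs a factor $2$. Consider the principal congruence subgroup $\Gamma_\Lambda(\mathfrak{d}^2)$ of $\OO(\Lambda)$.
\begin{itemize}
\item It preserves $L'$: if $g\in\Gamma_\Lambda(\mathfrak{d}^2)$ and $x\in L'$, then $gx-x\in\mathfrak{d}^2\Lambda\subset L'$.
\item It contains $\Gamma_{L'}(\mathfrak{d}^2)$, because $\mathfrak{d}L'\subset\mathfrak{d}\Lambda\subset L'$.
\end{itemize}
Therefore
\[
[\OO(L'):\Gamma\cap\Gamma_\Lambda(\mathfrak{d}^2)]\le[\OO(L'):\Gamma_{L'}(\mathfrak{d}^2)]\,[\Gamma_\Lambda(\mathfrak{d}^2):\Gamma\cap\Gamma_\Lambda(\mathfrak{d}^2)]\ll|\GL_m(\mathcal{O}/\mathfrak{d}^2)|\le N(\mathfrak{d})^{2m^2}.
\]
This bounds the number of orbits with fixed $L_X$ by $\ll N(\det T)^{m^2}$. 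Combining with Step 2 gives the lemma with $C=C_1+m^2$.

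I expect the main obstacle to be Step 3, specifically obtaining a bound uniform in $L'$. The constant in $[\Gamma_\Lambda(\mathfrak{d}^2):\Gamma\cap\Gamma_\Lambda(\mathfrak{d}^2)]$ must not depend on $L'$, and it does not, since it is bounded by $[\OO(\Lambda):\Gamma]$. One also has to check carefully the relation between the discriminant of $L_X$ and $\det T$ over a general $\mathcal{O}_F$. When $\mathcal{O}_F$ is not a PID, $L_X$ is free but $\Lambda$ may not be, so the discriminant should be taken as an ideal. The inequality $N(\mathfrak{d})\le[\Lambda:L_X]$ survives this.
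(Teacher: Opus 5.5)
Your proposal is correct and follows essentially the same route as the paper's proof. Both reduce to the indicator function of a lattice, bound the number of possible sublattices $L_X$ using $[\Lambda:L_X]^2\asymp N_{F/\mathbb{Q}}(\det T)$, and bound the orbits with $L_X$ fixed by a group index polynomial in $[\Lambda:L_X]$; your congruence-subgroup version of that last step is more careful than the paper's one-line index estimate.

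One small slip in Step 3 needs fixing. From $\mathfrak{d}(g-1)\Lambda\subset(g-1)L'\subset\mathfrak{d}^2L'$ you only get $(g-1)\Lambda\subset\mathfrak{d}L'$. So $\Gamma_{L'}(\mathfrak{d}^2)$ lies in $\Gamma_\Lambda(\mathfrak{d})$, but in general not in $\Gamma_\Lambda(\mathfrak{d}^2)$. Run Step 3 with $\Gamma_\Lambda(\mathfrak{d})$ instead; it still preserves $L'$ because $\mathfrak{d}\Lambda\subset L'$, and it gives the same bound.
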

\begin{proof}
It is enough to consider the case in which $\varphi_f$ is the indicator function of a free $\mathcal{O}_F$-lattice $\mathbf{L}_0\subset \mathbf{V}$. Let us fix a basis $X_0\in \mathbf{L}_0$. For any $X$ with $\varphi_f(X)\neq0$, the sublattice spanned by $X$ has index $|N_{F/\mathbb{Q}}(Q(X))| / |N_{F/\mathbb{Q}}(Q(X_0))|$ in $\mathbf{L}_0$, and the number of sublattices of a given index $n$ is bounded by $n^{C_1}$, $C_1>0$. Therefore it suffices to prove the same estimate when the summation is restricted to $X$ spanning a single fixed sublattice, say $\mathbf{L}\subset \mathbf{L}_0$. Now any two such $X$ differ by an element of $\OO(\mathbf{L})$, and 
\[
    [\OO(\mathbf{L}_0):(\OO(\mathbf{L})\cap \OO(\mathbf{L}_0))] 
    \leq 
    [\GL(\mathbf{L}_0):(\GL(\mathbf{L})\cap \GL(\mathbf{L}_0))] 
    \ll_{\mathbf{L}}
    [\mathbf{L}_0:\mathbf{L}]^{C_2}
\]
for some $C_2>0$.

\end{proof}
We obtain a bound by
\[
    \sum_{T\in \alpha^{ - 1}\Sym_m(\mathcal{O}_F)} |N_{F / \mathbb{Q}}(\det T)|^C \exp( - 2\pi \nm{\tp{a}Ta}_1)
\]
where $\alpha\in\mathcal{O}_F$. Since $T\mapsto \nm{\tp{a}Ta}_1$ is a norm and $T\mapsto N_{F / \mathbb{Q}}(\det T)$ is a polynomial function on $\Sym_m(\mathbb{A}_\infty)$, and $\alpha^{-1}\Sym_m(\mathcal{O}_F)$ is a lattice inside $\Sym_m(\mathbb{A}_\infty)$, we are done.

Next we deduce the general case from this by a see-saw argument. Suppose now that $c$ comes from $\mathbf{V}' / F'$. Let $\mathbb{A}_f'$ denote the finite adeles of $F'$. As in \cite{christensen2024linking}, we may reduce to the case in which $\varphi_f = \varphi_f'\otimes \varphi_f''$, with $\varphi_f'\in\mathscr{S}(\mathbf{V}'(\mathbb{A}_{f}')^m)$ and $\varphi_f''\in\mathscr{S}(\mathbf{U}(\mathbb{A}_{f})^m)$ and we may assume that $\Gamma'$ fixes $\mathbf{U}$. Denote by $\Theta(T',\varphi_f',\psi^0,vt)_{\mathbf{V}'}$ and $G^\ast(\tau,\varphi_f';c)_{\mathbf{V}'}$ the same construction as in \eqref{eq:TTf} and \eqref{eq:dgy}, respectively, for $\mathbf{V}'$, with $\tau$ embedded in $\mathfrak{H}_m^{rs}$ by the 'diagonal map'. From Lemma \ref{lem:fos} we obtain 
\[
    \Theta(T,\varphi_f,\psi^0,vt) |_{\mathcal{D}'} = \sum_{\substack{T'\in\Sym_m(F') \\ T''\in\Sym_m(F) \\ T =\tr_{F'/F}(T') + T ''}}\Theta(T',\varphi_f',\psi^0,vt)_{\mathbf{V}'} \;\sum_{\substack{X\in \mathbf{U}(F)^m\\ Q(X) = T''}} \varphi_f''(X).
\]
This implies immediately that $\beta$ converges absolutely. We further obtain 
\[
    G^\ast(\tau,\varphi_f;c) = G^\ast(\tau,\varphi_f';c)_{\mathbf{V}'} \;\sum_{X\in\mathbf{U}(F)^m}\varphi_f''(X) q^{Q(X)}
\]
as a formal identity. Since $G^\ast(\tau,\varphi_f';c)_{\mathbf{V}'}$ converges absolutely this implies the convergence of $G^\ast(\tau,\varphi_f;c)$, and that the identity above holds not just formally.


\section{Modularity}\label{sec:5}

For simplicity we assume in this section that $n$ is odd.\footnote{In the case that $n$ is even everything should carry over, if one uses the metaplectic double cover}

\subsection{The Weil Representation}\label{sec:5.1}

For this subsection we allow $F$ to be any number field and $\mathbf{V}$ to be any non-degenerate quadratic space over $F$ of even dimension. We let $d\in F^\times / (F^\times)^2$ denote the determinant of $\mathbf{V}$.

If $A$ is either the completion of $F$ at any place or one of the rings $\mathbb{A}$, $\mathbb{A}_f$, or $\mathbb{A}_\infty$, we let $\omega$ denote the Weil representation of $\Sp_m(A)$ on $\mathscr{S}(\mathbf{V}(A)^m)$ with respect to the standard additive character of $A$. 

\subsubsection*{Local case}
Let $v$ be a place of $F$. We denote by $(,)_v$ the Hilbert symbol of $F_v$ and define a quadratic character 
\[
    \chi_v(a) = (( - 1)^{\frac{\dim \mathbf{V}}{2}}d,a)_v, \hspace{1cm} a\in F^\times.
\]
For $a\in \GL_m(F_v)$, $b\in\Sym_m(F_v)$, $\varphi\in\mathscr{S}(\mathbf{V}(F_v)^m)$, and $X\in \mathbf{V}(F_v)^m$, we have the standard formulas \cite{MR1286835}
\begin{align*}
    (\omega(\begin{smallmatrix}a & \\ & \tp{a}^{ - 1}\end{smallmatrix})\varphi)(X) = \;& \chi_v(\det a) |\det a|^{\frac{\dim \mathbf{V}}{2}} \varphi(Xa), 
    \\
    (\omega(\begin{smallmatrix} 1 & b \\ & 1\end{smallmatrix})\varphi)(X) = \; & \psi(\tr(b Q(X)))\varphi(X),
    \\
    (\omega((\begin{smallmatrix} & - 1 \\ 1 & \end{smallmatrix} ))\varphi)(X) = \;& \gamma_v^{ - m} \int_{\mathbf{V}(F_v)^m} \varphi(x)\psi( - \tr((X,x)))dx.
\end{align*}
Here $\gamma_v$ is a certain fourth root of unity, and $dx$ is the self-dual Haar measure with respect to $\tr((\;,\;))$.

Together, these formulas determine $\omega$ uniquely, since every symplectic matrix over $F_v$ is a product of upper and lower triangular symplectic matrices \cite{MR244257}.

\subsubsection*{Global case}

We fix an 
$\mathcal{O}$-lattice $\mathbf{L}\subset \mathbf{V}(F)$ such that $Q(\mathbf{L})\subset \mathfrak{d}^{-1}$. We define the dual lattice and discriminant group by 
\[
    \mathbf{L}^\# := \{X\in \mathbf{V}(F):(X,\mathbf{L})\subset \mathfrak{d}^{ - 1}\} \hspace{1cm} 
    \mathbf{D}:= \mathbf{L}^\# / \mathbf{L}
\]
We define the level of $\mathbf{L}$ to be the ideal
\[
    \mathfrak{n}:=\{x\in \mathcal{O}: xQ(\mathbf{L}^\#) \subset \mathfrak{d}^{ - 1}\}.
\]
Let us enforce the convention that a subscript $v$ is short-hand for $-\otimes \mathcal{O}_v$, and a hat is short-hand for $-\otimes \widehat{\mathcal{O}}$. So for example, $\mathfrak{d}_v$ is the different ideal of $F_v$; and $\mathbf{L}_v^\#$, resp. $\widehat{\mathbf{L}}^\#$, is the similarly defined dual lattice of $\mathbf{L}_v$, resp. $\widehat{\mathbf{L}}$.

We obtain canonical isomorphisms
\[
    \mathbf{D} \cong \widehat{\mathbf{L}}^\# / \widehat{\mathbf{L}} \cong \prod_{\text{$v$ finite}} \mathbf{L}_v^\# / \mathbf{L}_v.
\]
We identify $\mathbb{C}[\mathbf{D}^m]^\vee$ with the set of Schwartz functions supported on $(\widehat{\mathbf{L}}^\#)^m$ and constant on cosets of $\widehat{\mathbf{L}}^m$. Likewise we identify $\mathbb{C}[(\mathbf{L}_v^\# / \mathbf{L}_v)^m]$ with the set of Schwartz functions supported on $(\mathbf{L}_v^\#)^m$ and constant on cosets of $\mathbf{L}_v^m$. There is then a commutative square
\begin{equation}\label{eq:wrc}
\begin{tikzcd}
    \mathbb{C}[\mathbf{D}^m]^\vee \ar[d,hook] \ar[r,"\cong"] 
    &
    \bigotimes_{v} \mathbb{C}[(\mathbf{L}_v^\# / \mathbf{L}_v)^m]^\vee
    \ar[d,hook] 
    \\
    \mathscr{S}(\mathbf{V}(\mathbb{A}_f)^m) \ar[r,"\cong"'] 
    &
    \bigotimes'_v \mathscr{S}(\mathbf{V}(F_v)^m) 
\end{tikzcd}.
\end{equation}
\begin{lemma}
    With notation as above:
    \begin{enumerate}[label=(\roman*)]
        \item the restriction of $\omega$ to $\Sp_m(\widehat{\mathcal{O}})$ preserves the subspace $\mathbb{C}[\mathbf{D}^m]^\vee\subset \mathscr{S}(\mathbf{V}(\mathbb{A}_f)^m)$;
        \item the restriction of $\omega$ to $\ker(\Sp_m(\widehat{\mathcal{O}})\to \Sp_m(\mathcal{O} / \mathfrak{n}))$ acts trivially on $\mathbb{C}[\mathbf{D}^m]^\vee$.
    \end{enumerate}
\end{lemma}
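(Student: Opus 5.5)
The plan is to reduce to the local statement at each finite place and then check generators. By the commutative square \eqref{eq:wrc}, $\mathbb{C}[\mathbf{D}^m]^\vee$ is the restricted tensor product of the local spaces $\mathbb{C}[(\mathbf{L}_v^\#/\mathbf{L}_v)^m]^\vee$. Likewise $\Sp_m(\widehat{\mathcal{O}})=\prod_v \Sp_m(\mathcal{O}_v)$ acts factorwise through the local Weil representations $\omega_v$, and the principal congruence kernel is the product of the local kernels $\ker(\Sp_m(\mathcal{O}_v)\to\Sp_m(\mathcal{O}_v/\mathfrak{n}_v))$. At places where $\mathbf{L}_v$ is unimodular and $\mathfrak{n}_v=\mathcal{O}_v$, the local space is spanned by the single function $\mathbf{1}_{\mathbf{L}_v^m}$, and its $\Sp_m(\mathcal{O}_v)$-invariance is the classical unramified statement. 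It is also implied by the general argument below. So it suffices to prove (i) and (ii) for a single finite place $v$.

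For (i), I will use that $\Sp_m(\mathcal{O}_v)$ is generated by three kinds of elements: $m(a)=\left(\begin{smallmatrix}a&\\&\tp{a}^{-1}\end{smallmatrix}\right)$ with $a\in\GL_m(\mathcal{O}_v)$, $n(b)=\left(\begin{smallmatrix}1&b\\&1\end{smallmatrix}\right)$ with $b\in\Sym_m(\mathcal{O}_v)$, and the Weyl element $w=\left(\begin{smallmatrix}&-1\\1&\end{smallmatrix}\right)$. This holds for a local ring, for instance via a Bruhat/Iwahori-type decomposition, or by refining the triangular-product fact cited from \cite{MR244257} to integral matrices. I then check each generator using the explicit formulas.
\begin{itemize}
\item For $m(a)$: $|\det a|=1$, $\chi_v(\det a)$ is a scalar, and $X\mapsto Xa$ preserves $(\mathbf{L}_v^\#)^m$ and $\mathbf{L}_v^m$.
\item For $n(b)$: the character $\psi(\tr(bQ(X)))$ is well defined on $(\mathbf{L}_v^\#/\mathbf{L}_v)^m$. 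The reason is that $Q(X+Y)-Q(X)=(X,Y)+Q(Y)$ with $(X,Y)\in\mathfrak{d}_v^{-1}$ and $Q(Y)\in\mathfrak{d}_v^{-1}$, while $\psi$ is trivial on $\mathfrak{d}_v^{-1}$.
\item For $w$: the Fourier transform, with self-dual measure for $\tr(\;,\;)$ and a character of conductor $\mathfrak{d}_v^{-1}$, sends $\mathbf{1}_{\mu+\mathbf{L}_v^m}$ to a multiple of $\psi(-\tr(X,\mu))\mathbf{1}_{(\mathbf{L}_v^\#)^m}$. This is a function on $(\mathbf{L}_v^\#/\mathbf{L}_v)^m$, since $(\mathbf{L}_v,\mathbf{L}_v^\#)\subset\mathfrak{d}_v^{-1}$.
\end{itemize}

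For (ii), I will use that $\omega_v$ restricted to $\Sp_m(\mathcal{O}_v)$ is a genuine representation. Its operators are a priori only projective, so I first note that the cocycle is trivial on $\Sp_m(\mathcal{O}_v)$ for even $\dim\mathbf{V}$. The kernel $K(\mathfrak{n}_v)$ of reduction mod $\mathfrak{n}_v$ is generated by three kinds of elements: $n(b)$ with $b\in\mathfrak{n}_v\Sym_m(\mathcal{O}_v)$, the transposes $\tp{n}(c)=w\,n(-c)\,w^{-1}$ with $c\in\mathfrak{n}_v\Sym_m(\mathcal{O}_v)$, and $m(a)$ with $a\equiv1\bmod\mathfrak{n}_v$. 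This follows from the Iwahori factorization of the congruence subgroup.
\begin{itemize}
\item $n(b)$ acts trivially, because $bQ(X)\in\mathfrak{n}_vQ(\mathbf{L}_v^\#)\subset\mathfrak{d}_v^{-1}$ by the definition of the level.
\item $\tp{n}(c)$ is conjugate by $w$ to an $n(-c)$. Since $w$ preserves the space by (i), and $n(-c)$ acts trivially on it, $\tp{n}(c)$ acts trivially too.
\item For $m(a)$, the translation $X\mapsto Xa$ is trivial on $(\mathbf{L}_v^\#/\mathbf{L}_v)^m$ provided $\mathfrak{n}_v\mathbf{L}_v^\#\subset\mathbf{L}_v$. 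This follows from the level condition: $x(X,\mathbf{L}_v^\#)\subset\mathfrak{d}_v^{-1}$ for $x\in\mathfrak{n}_v$ (polarize $xQ$), so $xX\in\mathbf{L}_v^{\#\#}=\mathbf{L}_v$. The remaining factor $\chi_v(\det a)$ equals $1$ when $\det a\equiv1\bmod\mathfrak{n}_v$. This is because the conductor of $\chi_v$ divides the level; the discriminant form $\mathbf{D}$ controls $d$ locally.
\end{itemize}

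The main obstacle is the constant bookkeeping. This comprises the triviality of $\chi_v$ and of the Weil indices $\gamma_v$ on the congruence subgroup, the splitting of the cocycle on $\Sp_m(\mathcal{O}_v)$, and the claim that the conductor of $\chi_v$ divides $\mathfrak{n}_v$. For these I will first try comparing with the standard treatment in \cite{MR1286835}. Failing that, I will verify the case $m=1$ directly and then reduce to it, using $w=\prod_i w_i$ (the partial Weyl elements) and the tensor structure $\mathscr{S}(\mathbf{V}^m)=\mathscr{S}(\mathbf{V})^{\otimes m}$.
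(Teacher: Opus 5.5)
Your reduction to a finite place via \eqref{eq:wrc}, the generator checks for (i), and the checks of $n(b)$, $m(a)$ and the lower unipotents in (ii) follow the paper's route and are fine. Your conjugation argument $\tp{n}(c)=w\,n(-c)\,w^{-1}$ makes explicit a point the paper leaves implicit. It also shows that $\gamma_v$ plays no role in (ii), since the scalar cancels.

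The gap is the step you flag at the end: that $\chi_v(\det a)=1$ whenever $\det a\equiv 1\bmod\mathfrak{n}_v$, i.e.\ that the conductor of $\chi_v$ divides $\mathfrak{n}_v$. This is the only non-formal input in (ii), and ``the discriminant form controls $d$ locally'' does not prove it. Your fallback cannot help. $\chi_v(\det a)$ depends only on $\det a$, so the case $m=1$ is literally the same question: whether $((-1)^{\dim\mathbf{V}/2}d,u)_v=1$ for all $u\in 1+\mathfrak{n}_v$. Nor can you avoid it by producing the needed $m(a)$ from the unipotent generators $n(b)$, $\tp{n}(c)$ of the congruence subgroup. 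Modulo $\mathfrak{n}_v^2$ these map to block off-diagonal matrices, so they only yield $m(a)$ with $a\equiv 1\bmod\mathfrak{n}_v^2$. The same issue sits inside your appeal to the ``classical unramified statement'' at places with $\mathfrak{n}_v=\mathcal{O}_v$. That appeal needs $\chi_v$ to be unramified there, including at dyadic places where $F_v/\mathbb{Q}_2$ may itself be ramified.

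The paper supplies this by a case analysis.
\begin{itemize}
\item Odd residue characteristic. If $\mathfrak{n}_v=\mathcal{O}_v$, your polarization argument gives $\mathbf{L}_v^\#\subset\mathbf{L}_v$, so $\mathbf{L}_v$ is unimodular. Since $\dim\mathbf{V}$ is even, $d$ is a unit up to squares and $\chi_v$ is unramified. Otherwise $\det a\in 1+\mathfrak{p}_v$ is a square by Hensel's lemma.
\item Residue characteristic $2$. Here genuine input is needed. Split $\mathbf{L}_v$ into rank-one pieces $D_ix^2$ and binary pieces $A_jx^2+B_jxy+C_jy^2$, and compute their levels and discriminants.
  \begin{itemize}
  \item If $\mathfrak{n}_v\subsetneq 4\mathcal{O}_v$, then $1+\mathfrak{n}_v\subset 1+4\mathfrak{p}_v$ consists of squares.
  \item If $4\mathcal{O}_v\subset\mathfrak{n}_v$, the level computation forces $d\in(F_v^\times)^2\mathcal{O}_v^\times$, using that there is an even number of rank-one pieces. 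This suffices when $\mathfrak{n}_v=4\mathcal{O}_v$.
  \item If $4\mathcal{O}_v\subsetneq\mathfrak{n}_v$, there are no rank-one pieces, and each $B_j^2-4A_jC_j$ lies in $B_j^2(1+4\mathfrak{n}_v^{-1})$.
  \end{itemize}
  In the last two cases a Hilbert-symbol criterion \cite[Proposition 21]{MR4864833} gives $\chi_v(1+\mathfrak{n}_v)=1$.
\end{itemize}
To complete (ii) you need this argument, or a reference valid for arbitrary dyadic $F_v$ with the $\mathfrak{d}_v^{-1}$-normalized duality.
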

\begin{proof}
We prove the analogous local statements, which by virtue of \eqref{eq:wrc} implies the global statement in the lemma.

(i): It is clear that the upper triangular matrices $(\begin{smallmatrix}a & \\ & \tp{a}^{ - 1}\end{smallmatrix})$, $a\in \GL_m(\mathcal{O}_v)$, and $(\begin{smallmatrix} 1 & b \\ & 1\end{smallmatrix})$, $b\in\Sym_m(\mathcal{O}_v)$, preserve $\mathbb{C}[(\mathbf{L}_v^\# / \mathbf{L}_v)^m]^\vee$. For the Weyl element, it is easy to see that if $\varphi$ is constant on cosets of $\mathbf{L}^m$ then $\omega((\begin{smallmatrix} & - 1 \\ 1 & \end{smallmatrix} ))\varphi$ is supported on $(\mathbf{L}^\#)^m$, and if $\varphi$ is supported on $(\mathbf{L}^\#)^m$ then $\omega((\begin{smallmatrix} & - 1 \\ 1 & \end{smallmatrix} ))\varphi$ is constant on cosets of $\mathbf{L}^m$. These elements generate $\Sp_m(\mathcal{O}_F)$ \cite{MR244257}, so the statements follows.

(ii): Suppose that $a\equiv  1 \bmod \mathfrak{n}_v$. To show that $(\begin{smallmatrix}a & \\ & \tp{a}^{ - 1}\end{smallmatrix})$ acts trivially, we check that $a-1$ acts trivially on $\mathbf{L}_v^\# / \mathbf{L}_v$, and that $\chi_v(\det a)=1$. 

The first part follows from the fact that $\mathbf{L}_v$ also is the dual lattice to $\mathbf{L}_v^\#$. The second part requires some case work. First let us suppose that the residue characteristic of $F_v$ is odd. In that case, if $\mathfrak{n}_v=\mathcal{O}_v$ then $\mathbf{L}_v$ must be unimodular and $(-1)^{\frac{\dim\mathbf{V}}{2}}d=1$. On the other hand, if $\mathfrak{n}_v\neq \mathcal{O}_v$ then $\det a$ must be a square. Hence we are left with the case that the residue characteristic is $2$. Now by adapting the argument in \cite[Chapter 15, 4.4.]{MR1662447} it follows that $(\mathbf{L},Q)$ is isomorphic to a direct sum of rank $1$ and rank $2$ quadratic lattices
\[
    q_i (x) =D_i x^2, \hspace{1cm} q_j(x,y) =A_jx^2 + B_jxy + C_jy^2,
\]
where $D_i,A_j,B_j,C_j\in \mathfrak{d}_v^{-1}$. Let $D_j = B_j^2-4A_jC_j$. The levels of $q_i$ and $q_j$ are $4D_i\mathfrak{d}$ and $D_j\mathfrak{d}^2$, respectively, and their discriminants are $2D_i$ and $-D_j$, respectively. Thus
\[
    (-1)^{\frac{\dim\mathbf{V}}{2}}d = \prod_i D_i \prod_j D_j,\hspace{1cm} \mathfrak{n}_v = \bigcap_i (4D_i\mathfrak{d}) \cap\bigcap_j (D_j\mathfrak{d}^2).
\]
If $\mathfrak{n}_v\subsetneqq 4\mathcal{O}_v$, then $\det(a)$ must be a square, so it suffices to consider the case in which $4\mathcal{O}_v\subset\mathfrak{n}_v$. Thus $D_i\mathfrak{d}_v = 4\mathcal{O}_v$ for all $i$, and $4\mathcal{O}_v\subset D_j\mathfrak{d}_v^2$ for all $j$. Since there is an even number of the $q_i$'s, we see that $d\in (F_v^\times)^2\mathcal{O}_v^\times$. If $\mathfrak{n}_v = 4\mathcal{O}_v$, this would imply that $\chi_v(\det a)=1$ by \cite[Proposition 21.]{MR4864833}. Thus there are none of the $q_i$'s, and $4\mathcal{O}_v\subsetneqq D_j\mathfrak{d}_v^2$ for all $j$. This implies that $2\notin B_j\mathfrak{d}_v$, and hence 
\[
    D_j \in B_j^2(1+4B_j^{-2}\mathfrak{d}_v^{-2}) = B_j^2 (1+4 D_j^{ - 1}\mathfrak{d}_v^{ - 1})
    \subset
    B_j^2 (1+4 \mathfrak{n}_v^{ - 1}),
\]
for all $j$. It follows that $\chi_v(\det a)=1$ again by \cite[Proposition 21.]{MR4864833}

If $b\equiv 0 \bmod \mathfrak{n}_v$, it follows from the definition of $\mathfrak{n}_v$ that $(\begin{smallmatrix}1 & b\\ & 1\end{smallmatrix})$ acts trivially. The congruence subgroup $\ker (\Sp_m(\mathcal{O}_v)\to \Sp_m(\mathcal{O}_v / \mathfrak{n}_v))$ is generated by upper and lower triangular matrices \cite[Proposition 13.2.]{MR244257}, so (ii) finally follows from these observations.
\end{proof}

Thus the restriction of $\omega$ determines a representation of $\Sp_m(\mathcal{O})$ on $\mathbb{C}[\mathbf{D}^m]^\vee$, which we will denote $\rho_\mathbf{L}^\vee$. We denote also by $\rho_\mathbf{L}:\Sp_m(\mathcal{O})\to\Aut \mathbb{C}[\mathbf{D}^m]$ the dual representation of $\rho^\vee$. 

One can also give explicit formulas for $\rho_\mathbf{L}$. For $\mu\in \mathbf{D}^m$, let $\varphi_{f,\mu}\in\mathscr{S}(\mathbf{V}(\mathbb{A}_f)^m)$ denote the indicator function of the set $\{X\in (\widehat{\mathbf{L}}^\#)^m : X\equiv \mu \bmod \widehat{\mathbf{L}}^m\}$, and let $\mathfrak{e}_\mu\in\mathbb{C}[\mathbf{D}^m]$ denote the corresponding basis element. Unraveling the definition gives that 
\[
    \rho_\mathbf{L}(\gamma)\mathfrak{e}_\mu = \sum_{\nu\in\mathbf{D}^m} (\omega(\gamma)^{ - 1}\varphi_{f,\nu})(\mu)\mathfrak{e}_\nu, \hspace{1cm} \gamma\in \Sp_m(\mathcal{O}).
\]
from this it is easy to verify explicit formulas for $\rho_\mathbf{L}$. Define a quadratic character $\chi:(\mathcal{O} / \mathfrak{n})^\times\to \mathbb{C}^\times$ by 
\[
    \chi(a) = \prod_{v,\; \mathfrak{n}_v\neq\mathcal{O}_v} \chi_v(a)
\]
and a fourth root of unity by $\gamma := \prod_{i = 1}^r \gamma_{\sigma_i}$, where $\sigma_1,\ldots,\sigma_r$ denotes the infinite places of $F$. Then for $a\in \GL_m(\mathcal{O} / \mathfrak{n})$ and $b\in\Sym_m(\mathcal{O} / \mathfrak{n})$
\begin{align*}
    \rho_\mathbf{L}(\begin{smallmatrix}
        a & \\ & \tp{a^{ - 1}}
    \end{smallmatrix})\mathfrak{e}_\mu 
    &= 
    \chi(a) \mathfrak{e}_{\mu a^{ - 1}},
    \\
    \rho_\mathbf{L}(\begin{smallmatrix}
        1 & b \\ & 1
    \end{smallmatrix})\mathfrak{e}_\mu 
    &= 
    e^{2\pi i \tr_{F / \mathbb{Q}}(\tr(bQ(\mu))) }\mathfrak{e}_{\mu},
    \\
    \rho_\mathbf{L}(\begin{smallmatrix}
        & - 1\\ 1 &
    \end{smallmatrix})\mathfrak{e}_\mu 
    &= 
    \left(\gamma\sqrt{|\mathbf{D}|}\right)^{ - m} \sum_{\nu\in\mathbf{D}^m} 
    e^{ -2\pi i \tr_{F / \mathbb{Q}}\tr(\mu,\nu)} \mathfrak{e}_\nu.
\end{align*}

\subsection{Modular Generating Series}

We return to the setup of Sections \ref{sec:1}--\ref{sec:4}. For $\tau\in\mathfrak{H}_m^r$ and $g = ((\begin{smallmatrix}
    \ast & \ast \\ c^{(1)} & d^{(1)}
\end{smallmatrix}),\ldots,(\begin{smallmatrix}
    \ast & \ast \\ c^{(r)} & d^{(r)}
\end{smallmatrix}))\in \Sp_m(\mathbb{A}_\infty)$, we define the factor of automorphy
\[
    j(g,\tau) = \prod_{i = 1}^r c^{(i)}\tau^{(i)} + d^{(i)}.
\]
The standard (adelic) Poisson summation argument implies

\begin{lemma}\label{lem:tps}
    The function
    \[
        \Theta_\mathbf{L}(\tau) = \sum_{\mu\in \mathbf{D}^m} \sum_{T\in\Sym_m(F)} \Theta(T,\varphi_{f,\mu},v) q^T\mathfrak{e}_\mu
    \]
    transforms as a Hilbert Siegel modular form of parallel weight $\frac{n+1}{2}$ and level $1$ with respect to $\rho$, that is
    \[
        \Theta_\mathbf{L}(\gamma \tau) =  j(\gamma,\tau)^{\frac{n + 1}{2}} \rho_\mathbf{L}(\gamma) \Theta_\mathbf{L}(\tau)
    \]
    for all $\gamma \in\Sp_m(\mathcal{O})$.
\end{lemma}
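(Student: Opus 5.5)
The plan is to realize $\Theta_\mathbf{L}(\tau)$ as an adelic theta function, apply the standard Poisson summation argument, and descend to $\Sp_m(\mathcal{O})$. For $g\in\Sp_m(\mathbb{A})$ set
\[
    \theta(g,\varphi) := \sum_{X\in\mathbf{V}(F)^m} (\omega(g)\varphi)(X), \hspace{1cm} \varphi = \varphi_f\otimes\phi_\infty ,
\]
where $\phi_\infty$ is the form-valued archimedean function from Lemma \ref{lem:cot} built from $\varphi^0$. By Lemma \ref{lem:cot} this sum converges normally in $\Omega^\ast(\mathcal{D})$, which justifies every rearrangement below.

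The first step is the invariance $\theta(\gamma g,\varphi)=\theta(g,\varphi)$ for $\gamma\in\Sp_m(F)$. Since $\Sp_m(F)$ is generated by the Levi elements, the unipotent elements and the Weyl element, it suffices to check these three. For the Levi and unipotent elements, invariance is immediate from the explicit formulas for $\omega$. For Levi elements one uses that the global character $\prod_v\chi_v$ is trivial on $F^\times$ and that the product formula gives $|\det a|_\mathbb{A}=1$. For unipotent elements one uses that the standard additive character is trivial on $F$. For the Weyl element one applies adelic Poisson summation on $\mathbf{V}(F)^m\subset\mathbf{V}(\mathbb{A})^m$. This uses self-duality of the measure, $\prod_v\gamma_v=1$ (Weil's reciprocity), and the rapid decay of $\phi_\infty$. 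The decay holds because $\phi_\infty$ is a polynomial times the Gaussian of the Siegel majorant, uniformly for $z$ in compacta.

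The second step translates to the classical picture. Take $g_\tau=(n(u)m(a))_\infty$ with $\tp{a}a=v$. The formulas for $\omega$ then give
\[
    \theta(g_\tau,\varphi_{f,\mu}\otimes\phi_\infty) = \det(v)^{\frac{n+1}{4}}\sum_T \Theta(T,\varphi_{f,\mu},\varphi^0,v)\,q^T
\]
(with the convention for $\Theta(T,\cdot,v)$ fixed as in \eqref{eq:TTf}). The needed archimedean input is that $\phi_\infty$ is an eigenfunction of $K_\infty=\prod_i U(m)$ with character $\det(k)^{\frac{n+1}{2}}$. At the definite places this is the standard fact for the Gaussian. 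At $\sigma_1$ it is the Kudla--Millson property of $\varphi^0$: up to the Gaussian normalization, $\varphi^0$ agrees with the Kudla--Millson form, whose weight $\frac{n+1-1+1}{2}\cdot$ structure for signature $(n,1)$ is $\frac{n+1}{2}$.

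The final step writes $\gamma\in\Sp_m(\mathcal{O})$ as $\gamma_\infty g_\tau=g_{\gamma\tau}k$ and uses $\gamma=\gamma_\infty\gamma_f$ embedded diagonally. Left invariance gives $\theta(\gamma_\infty g_\tau)=\theta(\gamma_f^{-1}g_\tau)$, and $\omega(\gamma_f^{-1})$ acts on $\mathbb{C}[\mathbf{D}^m]^\vee$ through $\rho_\mathbf{L}^\vee$ by the previous lemma. Dualizing to the $\mathfrak{e}_\mu$-valued series, together with the $\det(v)$ factor and the $K$-type, yields $j(\gamma,\tau)^{\frac{n+1}{2}}\rho_\mathbf{L}(\gamma)$.

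I expect the main obstacle to be the $K_\infty$-type computation for the nonscalar-valued form $\varphi^0$ at $\sigma_1$. Under the Schr\"odinger model I would verify it infinitesimally by checking that $\varphi^0$ is annihilated by the lowering operators. The alternative is to cite Kudla--Millson, which requires carefully matching their normalization to $\varphi^0=du(X_1)\wedge\cdots\wedge du(X_m)e^{-\pi r^2}$.
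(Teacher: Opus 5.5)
Your proposal takes the same route as the paper. The paper's proof is just the standard adelic Poisson summation argument together with the fact that $\phi_\infty$ is, up to a constant, the Kudla--Millson form: weight $\frac{p+q}{2}=\frac{n+1}{2}$ for signature $(n,1)$, and the Gaussian at the definite places. Your three steps correctly expand this. Matching $\det(v)^{\frac{n+1}{4}}$ against the $K_\infty$-character does give $j(\gamma,\tau)^{\frac{n+1}{2}}\rho_\mathbf{L}(\gamma)$, and dualizing $\omega(\gamma_f^{-1})=\rho_\mathbf{L}^\vee(\gamma^{-1})$ reproduces the paper's formula for $\rho_\mathbf{L}$ on the $\mathfrak{e}_\mu$.

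The one thing to correct is the infinitesimal check you propose for what you call the main obstacle; it would fail. Having $K_\infty$-type $\det^{\frac{n+1}{2}}$ is an eigenvalue condition for the action of $\mathfrak{k}'=\mathfrak{u}(m)$, not annihilation by the lowering operators. In fact $\varphi^0$ is not annihilated by them. If it were, $\Theta_\mathbf{L}(\tau)$ would be holomorphic in $\tau$, but its coefficients $\Theta(T,\varphi_f,\varphi^0,v)$ genuinely depend on $v$. Already along $v\mapsto vt$, Lemma \ref{lem:bpl}(e) shows the derivative is exact but nonzero. So either cite Kudla--Millson, as the paper does, or check the $\mathfrak{k}'$-eigenvalue equation directly in the Schr\"odinger model.

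A minor notational point: $n(u)m(a)\cdot i=u+ia\tp{a}$, so you need $a\tp{a}=v$. With that convention, the independence of the choice of $a$ in \eqref{eq:TTf} is exactly Lemma \ref{lem:bpl}(b), and the $\tp{a}a$ written in \eqref{eq:TTf} is a slip.
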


\begin{proof}
    This is a consequence of the fact that the Schwartz form $\varphi_\infty$ is the Kudla--Millson form for $\mathbf{V}$, which has weight $\dim\mathbf{V} /2$ with respect to $\omega$, and which follows for example from \cite[(5.10)]{MR4814534} and the explicit formulas in \cite{MR3796415}.
\end{proof}

We let
\[
    \Gamma_{\mathbf{L}} := \ker(\SO(\mathbf{L})\to \Aut(\mathbf{D}))
\]
and take $\Gamma\subset \Gamma_{\mathbf{L}}\cap G^0$ to be a finite index torsion-free subgroup.

We let $c$ be a fixed union of special cycles of general type in $M$, and let $s\in H_m(M,c)$ be a class whose image in $H_{m-1}(M)$ is the fundamental class of $C$. For $\tau\in \mathfrak{H}_m^r$ and $\varphi_f\in\mathscr{S}(\mathbf{V}(\mathbb{A}_f)^m)^\Gamma$ we consider the generating series
\[
    I(\tau,\varphi_f;s) : =\sum_{T\in \Sym_m(F)_+ } \iota(T,\varphi_f;s) q^T.
\]
We now define series valued in the Weil representation $\rho_\mathbf{L}$
\begin{align*}
    I_\mathbf{L}(\tau;s) &: = \sum_{\mu\in\mathbf{D}^m} I(\tau,\varphi_{f,\mu};s)\mathfrak{e}_\mu,
    \\
    G^\ast_\mathbf{L}(\tau;c) &: = \sum_{\mu\in\mathbf{D}^m} G^\ast(\tau,\varphi_{f,\mu};c)\mathfrak{e}_\mu.
\end{align*}
From Stokes theorem and Lemma \ref{lem:bpl}(e) we have the relation
\[
    \int_\sigma \Theta_\mathbf{L}(\tau) = I_{\mathbf{L}}(\tau;s) + G_\mathbf{L}^\ast(\tau;c).
\]
Lemma \ref{lem:tps} then yields
\begin{theorem}\label{thm:a}
    The function $I_\mathbf{L}(\tau,s)+G_\mathbf{L}^\ast(\tau;c)$ transforms as a Hilbert Siegel modular form of parallel weight $\frac{n+1}{2}$ and level $1$ with respect to $\rho_\mathbf{L}$. 
\end{theorem}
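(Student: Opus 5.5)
The plan is to identify $I_\mathbf{L}(\tau;s)+G^\ast_\mathbf{L}(\tau;c)$ with the integral $\int_\sigma \Theta_\mathbf{L}(\tau)$ of the vector-valued Kudla--Millson theta form over a chain $\sigma$ representing $s=[(\sigma,\mu)]$. Modularity then follows from Lemma \ref{lem:tps}, because integration over the fixed chain $\sigma$ commutes with the action of $\Sp_m(\mathcal{O})$: $\sigma$ does not depend on $\tau$, the automorphy factor $j(\gamma,\tau)^{\frac{n+1}{2}}$ is a scalar, and $\rho_\mathbf{L}(\gamma)$ acts only on the coefficient vectors $\mathfrak{e}_\mu$. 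For this to make sense we need $\Theta(T,\varphi_{f,\mu},\varphi^0,v)$ to be smooth and $\Gamma$-invariant on $M$. Smoothness and convergence come from Lemma \ref{lem:cot}. Invariance holds since $\Gamma\subset\Gamma_\mathbf{L}$ acts trivially on $\mathbf{D}$, so each $\varphi_{f,\mu}$ is $\Gamma$-invariant. Normal convergence also lets us integrate termwise in $T$.

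The key step is the coefficientwise identity
\[
\int_\sigma \Theta(T,\varphi_f,\varphi^0,v)=\iota(T,\varphi_f;s)+\beta(T,\varphi_f,v;c),
\]
where the left side is the $T$th coefficient of $\int_\sigma\Theta_\mathbf{L}$. To obtain it I integrate the transgression formula of Lemma \ref{lem:bpl}(e) from $t=1$ to $t=R$. Summed over $X$ with $Q(X)=T$, it reads
\[
\Theta(T,\varphi_f,\varphi^0,vR)-\Theta(T,\varphi_f,\varphi^0,v)=d\int_1^R\Theta(T,\varphi_f,\psi^0,vt)\frac{dt}{t}.
\]
The differentiation in $t$ and in $z$ may be interchanged with the sum by normal convergence of the $\psi^0$-series (Lemma \ref{lem:cot}). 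Integrating over $\sigma$ and using Stokes with $\partial\sigma=c_\ast\mu$ and $[\mu]=[C]$ turns the exact term into $\int_1^R\int_c\Theta(T,\varphi_f,\psi^0,vt)\frac{dt}{t}$. The inner integral is over a finite interval and a compact $c$, so Fubini applies.

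Next let $R\to\infty$. By Theorem \ref{thm:ili}, $\int_\sigma\Theta(T,\varphi_f,\varphi^0,vR)$ tends to $\iota(T,\varphi_f;s)$, and this limit is $0$ when $T$ is not totally positive definite. The other side tends to $\beta(T,\varphi_f,v;c)$, by the absolute convergence in Proposition \ref{prp:bgc}. Rearranging gives $\int_\sigma\Theta=\iota+\beta$, with the sign coming from the orientation conventions in Stokes. The vanishing part of Theorem \ref{thm:ili} ensures that $I$ only involves $T\in\Sym_m(F)_+$.

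Finally, sum over $T$ and $\mu$ with weights $q^T\mathfrak{e}_\mu$. The convergence of $G^\ast_\mathbf{L}$ from Proposition \ref{prp:bgc}, together with the convergence of $\int_\sigma\Theta_\mathbf{L}$, shows that $I_\mathbf{L}$ converges as well and that the identity holds as functions of $\tau$, so Lemma \ref{lem:tps} applies. I expect the main obstacle to be bookkeeping rather than analysis: matching the signs and normalizations in the Stokes step, and checking that $\beta$ is evaluated at the correct $v^{(1)}$ so the $q^T$ coefficients align.
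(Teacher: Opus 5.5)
Your proposal is the paper's argument: the paper gets $\int_\sigma\Theta_\mathbf{L}(\tau)=I_\mathbf{L}(\tau;s)+G^\ast_\mathbf{L}(\tau;c)$ from Stokes' theorem and Lemma \ref{lem:bpl}(e), then invokes Lemma \ref{lem:tps}; you have filled in the convergence and limit-interchange details. One caveat on the sign you deferred: with $\partial\sigma=c_\ast\mu$ and $[\mu]=[C]$, your displayed identity gives $\int_\sigma\Theta=\iota-\beta$, so the $+$ sign (here and in the paper) is set by the sign convention for the connecting map $H_m(M,c)\to H_{m-1}(C)$, not by any freedom in Stokes' theorem.
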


For the next part, we assume that $\Gamma$ is cocompact. Let $m'=n+1-m$, and consider two tuples of vectors $Y\in V^m$ and $Y'\in V^{m'}$ which span $F$-rational subspaces of $\mathbf{V}$. We are interested in the pairing
\[
    \varsigma(Y,Y'): =\int_M \alpha(Y',\varphi^0)\wedge \alpha(Y,\widetilde{\psi}^0) 
    -( - 1)^{m}
    \int_1^\infty\int_{c(Y)} \alpha(Y't^{\frac{1}{2}},\psi^0)\frac{dt}{t}.
\]
The first term converges because of \ref{lem:tps} and the compactness of $M$. The second term is convergent due to Proposition \ref{prp:bgc}. Note that when $c(Y)$ and $c(Y')$ are disjoint, then 
\begin{equation}\label{eq:ldi}
    -\int_1^\infty\int_{c(Y)} \alpha(Y't^{\frac{1}{2}},\psi^0)\frac{dt}{t}
    =
    \int_{c(Y)} \alpha(Y',\widetilde{\psi}^0),
\end{equation}
and hence $\varsigma(Y,Y') \bmod \mathbb{Z}$ is (up to sign conventions) the natural pairing $\hat{c}(Y)\star \hat{c}(Y')$ on differential cohomology between the differential characters $\hat{c}(Y)$ and $\hat{c}(Y')$. If $c(Y)$ and $c(Y')$ have intersections, then the possible difference between $\varsigma(Y,Y')\bmod \mathbb{Z}$ and $\hat{c}(Y)\star \hat{c}(Y')$ is closely related to the possible rationality of \eqref{eq:aix}, see Remark \ref{rem:igc}.

Our second theorem exhibits the pairings above in a (non-holomorphic) modular generating series. For $v\in\Sym_m(\mathbb{R})_+$ and $a\in\GL^+_m(\mathbb{R})$ with $v= a \tp{a}$, let
\[
    \varsigma(T,\varphi_f,v;Y) : = \sum_{\substack{X\in\mathbf{V}(F)^m / \Gamma \\ Q(X) = T}} \varphi_f(X)\varsigma(Y,Xa),
\]
and for $\tau\in\mathfrak{H}_m^r$ define the non-holomorphic series 
\[
    S_\mathbf{L}(\tau;Y) := \sum_{\mu\in \mathbf{D}^m} \sum_{T\in\Sym_m(F)_+} \upsilon(T,\varphi_{f,\mu},v;Y) q^{T} \mathfrak{e}_\mu.
\]

\begin{theorem}
    The function $S_\mathbf{L}(\tau;Y)+G_\mathbf{L}^\ast(\tau;c(Y))$ transforms as a Hilbert Siegel modular form of parallel weight $\frac{n+1}{2}$ and level $1$ with respect to $\rho_\mathbf{L}$. 
\end{theorem}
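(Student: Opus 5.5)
The plan is to realize $S_\mathbf{L}(\tau;Y)+G^\ast_\mathbf{L}(\tau;c(Y))$, up to the sign $(-1)^m$ appearing in the introduction, as the integral of the theta function $\Theta_\mathbf{L}(\tau)$ against a fixed current on $M$. Modularity then follows from Lemma \ref{lem:tps}, since integrating against a $\tau$-independent current commutes with the transformation law.

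\emph{The current.} Set $m'=n+1-m$. For $Y\in V^{m'}$ (so $c(Y)$ has dimension $m-1$ and $\alpha(Y,\widetilde\psi^0)$ is an $(m'-1)$-form), consider the functional
\[
    \Lambda_Y(\eta) := \int_M \eta\wedge\alpha(Y,\widetilde\psi^0),
\]
defined on smooth $m$-forms $\eta$ on the compact manifold $M$; it makes sense because $\alpha(Y,\widetilde\psi^0)$ is locally integrable by Corollary \ref{cor:dcc}(a). Apply $\Lambda_Y$ coefficientwise to $\Theta_\mathbf{L}(\tau)$. The series $\sum_T\Theta(T,\varphi_{f,\mu},\varphi^0,v)q^T$ converges normally in the smooth topology by Lemma \ref{lem:cot}, and $\Lambda_Y$ is continuous for uniform convergence. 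Hence $\Lambda_Y$ may be applied term by term, and $\Lambda_Y(\Theta_\mathbf{L}(\tau))$ satisfies the transformation law of Lemma \ref{lem:tps}.

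\emph{Identifying the coefficients.} Unfolding the $\Gamma$-orbit sums, the $T$th coefficient of $\Lambda_Y(\Theta_\mathbf{L})$ is
\[
    \sum_{X}\varphi_f(X)\int_M \alpha(Xa,\varphi^0)\wedge\alpha(Y,\widetilde\psi^0).
\]
After reordering the wedge (a sign depending on $m$ and $m'-1$, to be absorbed), this is exactly the first term of $\varsigma(Y,Xa)$. Adding $(-1)^m$ times the $T$th coefficient $\beta(T,\varphi_f,v;c(Y))$ of $G^\ast$, the second term of $\varsigma$ cancels. So I need the identity
\[
    \sum_X \varphi_f(X)\,\varsigma(Y,Xa) + (-1)^m\beta(T,\varphi_f,v;c(Y)) = \sum_X\varphi_f(X)\int_M \alpha(Xa,\varphi^0)\wedge\alpha(Y,\widetilde\psi^0),
\]
which is immediate from the definitions once the sum over $X$ is exchanged with the $t$-integral in $\beta$. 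That exchange is justified by the absolute convergence in Proposition \ref{prp:bgc}.

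\emph{Non-positive-definite $T$.} For $T$ not totally positive definite, the coefficient of $S_\mathbf{L}$ is by definition absent. I must check that these coefficients of $\Lambda_Y(\Theta_\mathbf{L})$ coincide with $(-1)^m\beta$ alone, i.e.\ that $\sum_X\varphi_f(X)\varsigma(Y,Xa)=0$. Two cases arise:
\begin{itemize}
    \item If $T$ is degenerate, all terms vanish by Lemma \ref{lem:bpl}(c).
    \item If $T$ is indefinite or negative, I would instead move everything through the spark equation. Since $c(Xa)=\emptyset$, Corollary \ref{cor:dcc} gives $\alpha(Xa,\varphi^0)=d\alpha(Xa,\widetilde\psi^0)$ smoothly. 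Stokes against the current $d[\alpha(Y,\widetilde\psi^0)]=[\alpha(Y,\varphi^0)]-\delta_{c(Y)}$ then turns the first term into $\int_{c(Y)}\alpha(Xa,\widetilde\psi^0)$ plus a smooth product term, which is then matched with \eqref{eq:ldi}.
\end{itemize}
It is possible that the definition intends $S_\mathbf{L}$ to run over all $T$, in which case this step is vacuous. I would verify which convention makes the identity hold and adopt it.

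\emph{Main obstacle.} The hardest step is the sign and term bookkeeping in the previous paragraph, together with justifying the interchanges when $c(Y)$ and $c(Xa)$ intersect. For the latter I would rely on the fact that $\beta$ was defined with $\int_1^\infty$ outside, which is precisely the form that Proposition \ref{prp:bgc} controls. This is also why $\varsigma$ is defined with the $t$-integral outside rather than with $\widetilde\psi^0$ restricted to $c(Y)$.
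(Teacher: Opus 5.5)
Your main line is exactly the paper's proof. The paper observes that the function is $\int_M \Theta_\mathbf{L}(\tau)\wedge\alpha(Y,\widetilde\psi^0)$ and applies Lemma \ref{lem:tps}, and your unfolding identity $\sum_X\varphi_f(X)\varsigma(Y,Xa)+(-1)^m\beta(T,\varphi_f,v;c(Y))=\int_M\Theta(T,\varphi_f,\varphi^0,v)\wedge\alpha(Y,\widetilde\psi^0)$ is the content of that observation. You are also right on two points the section-5 statement glosses over:
\begin{itemize}
\item this forces the sign $(-1)^m$ in front of $G^\ast_\mathbf{L}$, as in the introduction;
\item $Y$ must lie in $V^{m'}$.
\end{itemize}
No reordering of the wedge is needed, since the first term of $\varsigma(Y,Xa)$ already has $\alpha(Xa,\varphi^0)$ on the left.

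The step that would fail is your second bullet: $\varsigma$ does not vanish for indefinite or negative $T$. Take $T$ not positive semidefinite. Then $c(Xa)=\emptyset$, so $\eta=\alpha(Xa,\widetilde\psi^0)$ is smooth with $d\eta=\alpha(Xa,\varphi^0)$. The current equation of Corollary \ref{cor:dcc}(a) for $Y\in V^{m'}$ has sign $(-1)^{n-m'+1}=(-1)^m$, and gives
\[
\int_M \alpha(Xa,\varphi^0)\wedge\alpha(Y,\widetilde\psi^0)=(-1)^m\int_M\alpha(Xa,\widetilde\psi^0)\wedge\alpha(Y,\varphi^0)-(-1)^m\int_{c(Y)}\alpha(Xa,\widetilde\psi^0).
\]
By \eqref{eq:ldi}, the second term of $\varsigma(Y,Xa)$ equals $+(-1)^m\int_{c(Y)}\alpha(Xa,\widetilde\psi^0)$. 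The $c(Y)$-contributions therefore cancel, as you anticipated. However, the ``smooth product term'' survives:
\[
\varsigma(Y,Xa)=(-1)^m\int_M\alpha(Xa,\widetilde\psi^0)\wedge\alpha(Y,\varphi^0).
\]
This is the pairing of a smooth spark with the curvature form of $\hat c(Y)$, and it has no reason to vanish. Indeed, replace $Y$ by $Yt^{1/2}$ and let $t\to\infty$. The curvature form then tends to $\delta_{c(Y)}$, so the expression tends to $(-1)^m\int_{c(Y)}\alpha(Xa,\widetilde\psi^0)$. This is $-(-1)^m$ times that orbit's contribution to $\beta(T,\varphi_f,v;c(Y))$, which is generally nonzero for such $T$.

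So you cannot prove $\sum_X\varphi_f(X)\varsigma(Y,Xa)=0$ for non-positive $T$. If $S_\mathbf{L}$ is summed only over $\Sym_m(F)_+$, it differs from $\int_M\Theta_\mathbf{L}\wedge\alpha(Y,\widetilde\psi^0)-(-1)^mG^\ast_\mathbf{L}$ by exactly these missing coefficients. The convention that makes the identity, and the paper's one-line proof, correct is the introduction's: $S$ runs over all $T\in\Sym_m(F)$. With that choice the second bullet becomes vacuous and your argument is complete.
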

\begin{proof}
This follows by Lemma \ref{lem:tps} after noting that the function in question is precisely $\int_{M}\Theta_\mathbf{L}(\tau)\wedge \alpha(Y,\widetilde{\psi}^0)$.
\end{proof}
\begin{remark}\label{rem:igc}
    Suppose that $s:(\Gamma_Y\backslash c_Y)\times \mathbb{R}\to M$ is a smooth map with $s(-,0)$ equal to $c(Y)$, and such that $s$ and $c(Y)$ are transverse. For $\varepsilon>0$ let $s_\varepsilon$ be the restriction of $s$ to $(\Gamma_Y\backslash c_Y)\times [0,\varepsilon]$, and $c_\varepsilon$ the restriction of $s$ to $(\Gamma_Y\backslash c_Y)\times\{\varepsilon\}$. Then $\hat{c}(Y)$ is also represented by the spark $\alpha(Y,\widetilde{\psi}^0)-(-1)^{mm'}\delta_{s_\varepsilon}$, where $\delta$ denotes the delta current. For generic $\varepsilon$, $c_\varepsilon$ and $c(Y')$ are disjoint, hence $\hat{c}(Y)\star \hat{c}(Y')$ is given by 
\begin{equation}\label{eq:adp}
    \int_M \alpha(Y',\varphi^0)\wedge \alpha(Y,\widetilde{\psi}^0) -( - 1)^{m}
    \int_{s_\varepsilon} \alpha(Y',\varphi^0)
    +( - 1)^{m}
    \int_{c_\varepsilon} \alpha(Y',\widetilde{\psi}^0).
\end{equation}
Now by Stokes theorem and lemma \ref{lem:bpl}(e) the difference between $\varsigma(Y,Y')$ and \eqref{eq:adp} is simply
\begin{equation}
    ( - 1)^m\lim_{t\to\infty}\int_{s_\varepsilon} \alpha(Y't^{\frac{1}{2}},\varphi^0).
\end{equation}
We saw in Section \ref{sec:3} that this type of expression encodes exactly the rational part of the linking numbers.
\end{remark}

We end with a consequence of the above ideas which we find interesting. Suppose that $(\overline{\alpha}(T,\mu,v))_{T,\mu}$ is a sequence of smooth $\mathbb{R} / \mathbb{Z}$-valued functions of the variable $v\in\Sym_m(\mathbb{R})_+$. Let us say that the series $\overline{F}(\tau)=\sum_{\mu}\sum_T \overline{\alpha}(T,\mu,v) q^T \mathfrak{e}_\mu$ is a non-holomorphic Hilbert Siegel modular form modulo $\mathbb{Z}$ if there exists real-valued functions $\alpha(T,\mu,v)$ which lift $\overline{\alpha}(T,\mu,v)$ and such that
\[
    F(\tau) =\sum_{\mu\in \mathbf{D}^m}\sum_{T\in\Sym_m(F)_ +} \alpha(T,\mu,v) q^T \mathfrak{e}_\mu
\]
converges for all $\tau$ and transforms as a Hilbert Siegel modular form. 

\begin{corollary}\label{cor:mzm}
    Suppose that $Y_1,\ldots,Y_\ell$ are such that the cycle $c=c(Y_1)+\ldots+c(Y_\ell)$ is null-homologous in $M$, and let $\hat{c}=\hat{c}(Y_1)+\ldots+\hat{c}(Y_\ell)$. Then the function 
    \[
        \sum_{\mu\in \mathbf{D}^m}\sum_{T\in\Sym_m(F)_+} [\hat{c}(T,\varphi_{f,\mu},v)\star \hat{c}] q^T \mathfrak{e}_\mu
    \]
    is a non-holomorphic Hilbert Siegel modular forms modulo $\mathbb{Z}$ of level $1$ with respect to $\rho_\mathbf{L}$.
\end{corollary}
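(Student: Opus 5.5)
The plan is to take the previous theorem as the source of real lifts and reduce everything to the identity $\varsigma(T,\varphi_{f,\mu},v;Y)\equiv \hat c(T,\varphi_{f,\mu},v)\star\hat c \bmod \mathbb{Z}$, combined with the fact that the non-holomorphic correction term $G^\ast$ cancels. Since $c$ and $\hat c$ are additive in the $Y_j$, summing the preceding theorem over $j=1,\ldots,\ell$ shows that
\[
    \sum_{j=1}^\ell \Big(S_\mathbf{L}(\tau;Y_j)+G^\ast_\mathbf{L}(\tau;c(Y_j))\Big)
\]
transforms as a Hilbert Siegel modular form of weight $\frac{n+1}{2}$ with respect to $\rho_\mathbf{L}$. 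I would take the real lifts $\alpha(T,\mu,v)$ to be the $(T,\mu)$ coefficients of this function. These are $\sum_j\varsigma(T,\varphi_{f,\mu},v;Y_j)+\beta(T,\varphi_{f,\mu},v;c)$, with $\beta$ additive in $c$. Convergence is given by Proposition \ref{prp:bgc} and Lemma \ref{lem:tps}. One point needs care: the definition requires $F$ to be supported on $T$ totally positive. So I would check that the coefficients for $T$ not totally positive definite are either constant in $\tau$ and can be discarded, or are absorbed by reorganising the lift. This is to be verified rather than assumed.

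The main step is to show, for each $T$ totally positive definite and each $\mu$, that the lift reduces to the pairing modulo $\mathbb{Z}$. Concretely, after unfolding each coefficient of $\Theta$ over $\Gamma$-orbits $X$ with $Q(X)=T$, I need
\[
    \sum_j \varsigma(Y_j,Xa)+(\text{the $G^\ast$ contribution}) \equiv \hat c(X)\star\hat c \pmod{\mathbb{Z}}.
\]
Here I would use that $c$ is null-homologous. Choose $s\in H_m(M,c)$ with boundary $[C]$, which is exactly the setting of Theorem \ref{thm:ili}. By the symmetry of the differential cohomology pairing, $\hat c(T)\star\hat c$ can be computed with the roles exchanged. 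The spark of $\hat c$ may be replaced by $\sum_j\alpha(Y_j,\widetilde\psi^0)-\pm\delta_\sigma$, because $\partial\sigma=c$ and so $\delta_\sigma$ differs from it by an integral current. Pairing this with the curvature $\Theta(T,\varphi_{f,\mu},\varphi^0,v)$ adds $\pm\int_\sigma\Theta(T,\ldots,v)$ to the representative.

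I then decompose $\int_\sigma\Theta(T,vt)$, via Lemma \ref{lem:bpl}(e) and Stokes, into $\iota(T;s)$ plus $\beta$-type terms. This parallels Remark \ref{rem:igc}: there the discrepancy between $\varsigma$ and the true pairing is a $\lim_{t\to\infty}\int_{s_\varepsilon}\alpha(\cdot,\varphi^0)$ term, which Section \ref{sec:3} shows is rational. In the null-homologous setting these limit terms assemble into the total linking number $\iota(T,\varphi_{f,\mu};s)$. The differential character pairing is independent of the choice of spark, and the integral parts of $\iota$ vanish modulo $\mathbb{Z}$.

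The hard part is the intersecting case. Where $c(X)$ meets $c(Y_j)$, $\iota$ is only known to be rational, since the local contributions of $\pm\frac12$ arise from the perturbation in Section \ref{sec:3}. There I would work with the perturbed cycles $c_\varepsilon$ of Remark \ref{rem:igc}. For generic $\varepsilon$ these are disjoint from $c(X)$, so the pairing is computed exactly by \eqref{eq:adp}. The defect $(-1)^m\lim_t\int_{s_\varepsilon}$ must then be matched against the corresponding half-integral contributions in $\iota(T;s)$. The aim is to show that the difference is an integer, essentially a signed intersection count of $s_\varepsilon$ with $c(X)$. If this matching fails for the explicit sign conventions, the fallback is to redefine the lift by subtracting these explicitly computed rational local terms $\lim_t\int_{s_\varepsilon}$. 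Because $c$ is null-homologous, those terms combine into coefficients of the modular series $I_\mathbf{L}(\tau;s)+G^\ast_\mathbf{L}(\tau;c)$ of Theorem \ref{thm:a}, and modularity of the lift is preserved.
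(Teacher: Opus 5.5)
There is a genuine gap, and it lies in your choice of lift. The coefficients of $\sum_j\bigl(S_\mathbf{L}(\tau;Y_j)+G^\ast_\mathbf{L}(\tau;c(Y_j))\bigr)$ are $L(T,\mu,v)=\int_M\Theta(T,\varphi_{f,\mu},\varphi^0,v)\wedge\sum_j\alpha(Y_j,\widetilde\psi^0)$, and these are \emph{not} congruent to $\hat c(T,\varphi_{f,\mu},v)\star\hat c$ modulo $\mathbb{Z}$. Your own spark computation shows why. With $A=\sum_j\alpha(Y_j,\widetilde\psi^0)\mp\delta_\sigma$, the pairing is represented by
$\int_M\Theta(T,\varphi_{f,\mu},\varphi^0,v)\wedge A=L(T,\mu,v)\mp\int_\sigma\Theta(T,\varphi_{f,\mu},\varphi^0,v)$.
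So your main step is equivalent to $\int_\sigma\Theta(T,\varphi_{f,\mu},\varphi^0,v)\in\mathbb{Z}$. This number is the $(T,\mu)$-coefficient of $I_\mathbf{L}(\tau;s)+G^\ast_\mathbf{L}(\tau;c)$. It is continuous in $v$ and tends to $\iota(T,\varphi_{f,\mu};s)$ along $vt$, $t\to\infty$. Hence it can only be integer-valued if $\beta(T,\varphi_{f,\mu},v;c)$ vanishes identically, which fails in general. For $T^{(1)}$ not positive definite, where $\iota=0$ by Theorem~\ref{thm:ili}, it is, up to sign, just the $\beta$-term.

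The cancellation of $G^\ast$ you invoke does happen, but inside $\int_M\Theta\wedge A$: the $\beta$-term of $\int_\sigma$ cancels the one hidden in $L$. What remains is, up to sign conventions, $\sum_j\varsigma(T,\varphi_{f,\mu},v;Y_j)\mp\iota(T,\varphi_{f,\mu};s)$, not your lift. Consequently your intersecting-case programme aims at the wrong target. Matching the $\pm\tfrac12$ contributions would require an integrality of $\iota$ that Section~\ref{sec:3} does not give; it only gives rationality. Your fallback as formulated subtracts only the rational local terms $\lim_t\int_{s_\varepsilon}$, and their generating series is not modular.

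The missing step is to take $\int_M\Theta(T,\varphi_{f,\mu},\varphi^0,v)\wedge A$ itself as the lift. Since $c=\partial\sigma$, $A$ is a spark for $\hat c$ whose exterior derivative is smooth. Choose a smooth $\alpha_0$ with $d\alpha_0=dA$, and correct it by a smooth closed form cohomologous to $A-\alpha_0$. This shows $\hat c$ is represented by a smooth form $\alpha$ with $A-\alpha$ exact, which is exactly what the paper uses in the remark following the corollary. The curvature $\Theta(T,\varphi_{f,\mu},\varphi^0,v)$ of $\hat c(T,\varphi_{f,\mu},v)$ is smooth and closed. Therefore $\hat c(T,\varphi_{f,\mu},v)\star\hat c\equiv\int_M\Theta(T,\varphi_{f,\mu},\varphi^0,v)\wedge\alpha\pmod{\mathbb{Z}}$ for every $T$, regardless of whether $c(T,\varphi_{f,\mu})$ meets $c$. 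The generating series of these lifts is $\int_M\Theta_\mathbf{L}(\tau)\wedge\alpha$, which is modular by Lemma~\ref{lem:tps}.

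Equivalently, this series is the difference $\sum_j\bigl(S_\mathbf{L}(\tau;Y_j)+G^\ast_\mathbf{L}(\tau;c(Y_j))\bigr)\mp\bigl(I_\mathbf{L}(\tau;s)+G^\ast_\mathbf{L}(\tau;c)\bigr)$ of two modular series. In it, $\iota$ is part of the lift and never needs to be integral.

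As for your caveat on non-totally-positive $T$: those coefficients are genuinely $v$-dependent and cannot be discarded. The statement has to be read with the sum over all $T\in\Sym_m(F)$, as in the introduction, using that $\hat c(T,\varphi_{f,\mu},v)$ is a smooth class for such $T$.
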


Actually, it is not so important that ${c}$ is a special cycle in the above corollary. If $\hat{c}$ is any differential character whose associated class in $H_{m}(M,\mathbb{R})$ is trivial, then we may represent $\hat{c}'$ by smooth form $\alpha$ and hence $\hat{c}(T,\varphi_{f,\mu},v)\star \hat{c} = \int_M \Theta(T,\varphi_f,\varphi^0,v)\wedge \alpha \bmod \mathbb{Z}$. It seems a quite perplexing question to us, whether or not some version of  Corollary \ref{cor:mzm} could also hold when $c$ is not null-homologous.

\bibliographystyle{alpha}
\bibliography{bibliography}

\end{document}